
\documentclass[12pt]{amsart}

\voffset=-1.4mm
\oddsidemargin=17pt \evensidemargin=17pt
\headheight=9pt     \topmargin=26pt
\textheight=576pt   \textwidth=440.8pt
\parskip=0pt plus 4pt

\usepackage{amssymb}
\usepackage{bm}
\usepackage{graphicx}
\usepackage{psfrag}
\usepackage[usenames,dvipsnames]{xcolor}
\usepackage{enumerate}
\usepackage{multirow}
\usepackage{url}

\usepackage{subcaption}
\captionsetup[subfigure]{labelfont=rm} 

\usepackage{comment}

\usepackage{algpseudocode}

\usepackage{mathtools}

\usepackage{xy}
\input xy
\xyoption{all}

\usepackage{tikz}

\newcommand{\excise}[1]{}

\newtheorem{thm}{Theorem}[section]
\newtheorem{lemma}[thm]{Lemma}

\newtheorem{cor}[thm]{Corollary}
\newtheorem{prop}[thm]{Proposition}
\newtheorem{conj}[thm]{Conjecture}
\newtheorem{question}[thm]{Question}

\theoremstyle{definition}

\newtheorem{example}[thm]{Example}
\newtheorem{remark}[thm]{Remark}
\newtheorem{defn}[thm]{Definition}

\numberwithin{equation}{section}



\renewcommand\>{\rangle}
\newcommand\<{\langle}

\newcommand\RR{\mathbb{R}}

\newcommand\ZZ{\mathbb{Z}}

\newcommand\kk{\Bbbk}

\newcommand\cC{{\mathcal C}}


\newcommand\app{\mathord\approx}

\newcommand\til{\mathord\sim}



 %
\DeclareMathOperator\Span{span} 

\DeclareMathOperator\Ap{Ap} 
\DeclareMathOperator\supp{supp} 





\begin{document}

\mbox{}
\title[Numerical semigroups, polyhedra, and posets IV]{Numerical semigroups, polyhedra, and posets IV:\ \\ walking the faces of the Kunz cone}

\author[Brower]{Cole Brower}
\address{Mathematics Department\\San Diego State University\\San Diego, CA 92182}
\email{cole.brower@gmail.com}

\author[McDonough]{Joseph McDonough}
\address{School of Mathematics\\University of Minnesota\\Minneapolis, MN 55455}
\email{mcdo1248@umn.edu}

\author[O'Neill]{Christopher O'Neill}
\address{Mathematics Department\\San Diego State University\\San Diego, CA 92182}
\email{cdoneill@sdsu.edu}

\date{\today}

\begin{abstract}
A numerical semigroup is a cofinite subset of $\mathbb Z_{\ge 0}$ containing $0$ and closed under addition.  Each numerical semigroup $S$ with smallest positive element $m$ corresponds to an integer point in the Kunz cone $\mathcal C_m \subseteq \mathbb R^{m-1}$, and the face of $\mathcal C_m$ containing that integer point determines certain algebraic properties of $S$.  In this paper, we introduce the Kunz fan, a pure, polyhedral cone complex comprised of a faithful projection of certain faces of $\mathcal C_m$.  We characterize several aspects of the Kunz fan in terms of the combinatorics of Kunz nilsemigroups, which are known to index the faces of $\mathcal C_m$, and our results culminate in a method of ``walking'' the face lattice of the Kunz cone in a manner analogous to that of a Gr\"obner walk.  We apply our results in several contexts, including a wealth of computational data obtained from the aforementioned ``walks'' and a proof of a recent conjecture concerning which numerical semigroups achieve the highest minimal presentation cardinality when one fixes the smallest positive element and the number of generators.  
\end{abstract}

\maketitle


\section{Introduction}
\label{sec:intro}

A numerical semigroup is a cofinite subset $S \subseteq \ZZ_{\ge 0}$ containing $0$ and closed under addition; see~\cite{numericalappl} for a thorough intro.  
Each numerical semigroup with smallest positive element $m$ corresponds to an integer point in the Kunz cone $\cC_m \subseteq \RR^{m-1}$.  Inspired by a construction of Kunz~\cite{kunz}, this family of polyhedral cones has been of significant interest in the last decade.  The original motivation for investigating these polyhedra was enumerative in nature (e.g., utilizing lattice point methods to enumerate numerical semigroups with a given number of gaps~\cite{kunzcoords}, or addressing some longstanding asymptotic questions~\cite{kaplancounting}).  

Much of the recent interest in the Kunz cone, however, has focused on the faces of~$\cC_m$.  In this time, numerical semigroups $S$ and $T$ corresponding to points in the same face of $\cC_m$ have been shown to share numerous algebraic properties, including embedding dimension (i.e. the number of minimal generators), Cohen-Macaulay type, and the symmetric property \cite{wilfmultiplicity,kunzfaces1}.
The defining toric ideals of $S$ and $T$ have been shown to possess similar minimal binomial generators~\cite{kunzfaces3}, and were recently shown to have identical Betti numbers up to reduction of graded degrees modulo $m$~\cite{kunzfiniteres}.  When $S$ and $T$ lie in certain popular families of numerical semigroups, such as those that are complete intersection, generated by (generalized) arithmetic sequences, or constructed via gluings, has also been shown to coincide~\cite{kunzfaces2}.  

At the heart of these shared properties is the Kunz nilsemigroup:\ a finite nilsemigroup associated to each face $F \subseteq \cC_m$, derived from a portion of the divisibility poset of each numerical semigroup corresponding to a point in $F$ \cite{kunzfaces3,kunzfaces1}.  Kunz nilsemigroups provide a combinatorial framework for working with both the geometry of the faces of the Kunz cone~\cite{adventure5} and the algebra of the numerical semigroups therein~\cite{minprescard}.  

One application of the Kunz cone is a computational method of enumerating numerical semigroups with fixed $m$; this was utilized in~\cite{wilfmultiplicity} to make headway on Wilf's conjecture, one of the longest open problems in the numerical semigroups literature.  Unfortunately, the number of faces of $\cC_m$ grows quickly in $m$, so computing the full face lattice of $\cC_m$ quickly becomes prohibitively difficult.  If one is only interested in numerical semigroups with a relatively small number of generators (as is often the case), the relevant faces of $\cC_m$ have small dimension and thus are far less numerous.  However, since the cone $\cC_m$ is defined by hyperplanes, existing computational methods make enumerating only lower-dimensional faces difficult.  

The aim of the present manuscript is twofold:\ (i)~to introduce a method of computing and enumerating, for fixed $k$, the faces of~$\cC_m$ in which the points corresponding to $k$-generated numerical semigroups reside; and (ii)~to further develop the theoretical framework linking Kunz nilsemigroups and the geometry of the faces of $\cC_m$ they index.  
To this end, we introduce the Kunz fan (Definition~\ref{d:kunzfan}), a pure cone complex comprised of a faithful projection of such faces.  We~characterize several aspects of the Kunz fan, such as its boundary (Theorem~\ref{t:purefan} and Proposition~\ref{p:fanboundary}), its chambers (Corollary~\ref{c:chamberstaircase}), an $H$-description of its faces (Theorem~\ref{t:outerbettihdesc}), and chamber incidence (Theorem~\ref{t:cutandpaste}), in terms of the combinatorics of Kunz nilsemigroups.  

On the computational front, our results yield a method of ``walking'' the face lattice of the Kunz cone (Section~\ref{sec:kunzwalk}) in a manner analogous to that of a Gr\"obner walk~\cite{toricgroebnerwalk} (this comparison is not just superficial; see Remark~\ref{r:groebnerfans}).  Our algorithm represents a marked improvement over prior methods of enumerating the faces of $\cC_m$:\ we computed all faces of every Kunz fan with $k = 3$ and $m \le 42$ on our personal machines in under a day (we ran out of memory at $m = 43$), whereas computing the full face lattice for $m = 18$ for~\cite{wilfmultiplicity} took a cluster 3 weeks.  This data is of particularly high interest in examining the relationship between a numerical semigroup's multiplicity, embedding dimension, and minimal presentation cardinality; indeed, these methods were utilized to obtain much of the table given in the introduction of~\cite{minprescard}, and several of the constructions given thereafter were identified from the extremal examples produced by those computations.  

On the theoretical front, Section~\ref{sec:applications} includes several applications:\ 
we classify the Kunz nilsemigroups of 3-generated numerical semigroups; 
we identify a family of faces of $\cC_m$ that yields an exponential lower bound on the number of rays of $\cC_m$; 
we prove \cite[Conjecture~7.3]{minprescard} concerning which numerical semigroups with fixed $m$ and $k$ achieve the highest minimal presentation cardinality; 
and we prove a result related to a longstanding open problem concerning Gr\"obner fans of toric ideals~\cite{groenberfanchamberfacets}.  We also characterize the finite nilsemigroups that are Kunz (Theorem~\ref{t:kunzcondition}), answering a question posed in~\cite{kunzfaces1}.

\section{Background}
\label{sec:background}

In this section, we recall some necessary background on semigroups, polyhedral geometry, Kunz nilsemigroups, and the Kunz cone. For a more complete overview, see~\cite{numericalappl}, \cite{ziegler}, \cite[Section~2]{minprescard} and \cite[Section~3]{kunzfaces3}, respectively.  

For a commutative semigroup $(N, +)$, an element $\infty\in N$ is \emph{nil} if $a + \infty = \infty$ for all $a \in N$. We call an element $a \in N$ \emph{nilpotent} if $na = \infty$ for some $n \in \ZZ_{\geq 1}$, and \emph{partly cancellative} if $a + b = a + c \ne \infty$ implies $b = c$ for all $b,c \in N$. We say $N$ is a \emph{nilsemigroup} if $N$ has an identity element and every non-identity element is nilpotent, and that $N$ is \emph{partly cancellative} if every non-nil element of $N$ is partly cancellative.  Note that any nilsemigroup is \emph{reduced}, meaning its only unit is the identity.  We call any element of $N$ that cannot be written as the sum of two non-identity elements an \emph{atom} of $N$. 

All semigroups $N$ in this paper are assumed to be commutative, partly cancellative, finitely generated, and reduced.  Under these assumptions, the atoms $n_0, \ldots, n_k \in N$ comprise the unique minimal generating set~\cite{fingenmon}; we denote this by $N = \< n_0, \ldots, n_k\>$.  A \emph{factorization} of $n \in N$ is an expression
\[
n = z_0n_0 + \cdots + z_kn_k  
\]
where each $z_i \in \ZZ_{\geq 0}$. The \emph{set of factorizations} of $n \in N$ is the set
\[
\mathsf{Z}_N(n) = \{z \in \ZZ_{\geq 0}^{k+1} \mid n = z_0n_0 + \cdots + z_kn_k\} \subset \ZZ_{\geq 0}^{k+1}.
\]
The \emph{factorization homomorphism} $\varphi_N : \ZZ_{\geq 0}^{k+1} \to N$ is the semigroup homomorphism 
\[
\varphi_N(z_0,\ldots, z_k) = z_0n_0 + \cdots z_kn_k.
\]
The \emph{kernel} of $\varphi_N$ is 
\[
\ker \varphi_N = \left\{ (a,b) \in \ZZ_{\geq 0}^{k+1} \times \ZZ_{\geq 0}^{k+1} \mid \varphi_N(a) = \varphi_N(b) \right\}  
\]
which induces a \emph{congruence relation} $\til$ on $\ZZ_{\geq 0}^{k+1}$, setting $a \sim b$ whenever $(a,b) \in \ker \varphi_N$ (recall that a \emph{congruence} is an equivalence relation such that $a \sim b$ implies $a+c \sim b+c$ for any $a,b,c \in \ZZ_{\geq 0}^{k+1}$).  We call any such pair $(a,b)$ or $a \sim b$ a \emph{trade} of $N$.  
A set of trades $\rho$ is said to \emph{generate} $\til$ if $\til$ is the smallest congruence on $\ZZ_{\geq 0}^{k+1}$ containing $\rho$.  A~\emph{presentation} of $N$ is a set $\rho$ of trades obtained by taking a generating set for $\til$ and omiting any $a \sim b$ where $\varphi_N(a)$ is nil.  A presentation $\rho$ of $\til$ is \emph{minimal} if no proper subset of $\rho$ is a presentation of $\til$.  It is known that any two minimal presentations of a finitely generated, partly cancellative semigroup have the same cardinality~\cite{kunzfaces3,fingenmon}.  

The \emph{support} of a factorization $ z \in \ZZ_{\geq 0}^{k+1}$ is the set 
$$\supp(z) = \{i \mid z_i > 0\}$$
of nonzero coordinates.  For $Z \subseteq \ZZ_{\geq 0}^{k+1}$, define 
$$\supp(Z) = \{i \mid z_i > 0 \text{ for some } z \in Z\},$$
and the \emph{factorization graph} $\nabla_Z$, whose vertices are elements of $Z$, and two factorizations $z, z' \in Z$ are connected by an edge if $\supp(z) \cap \supp(z') \ne \emptyset$. For $n \in N$, we write $\nabla_n$ for the factorization graph whose vertex set is $\mathsf Z_N(n)$.  For each $i \in \supp(Z)$, let
\[
Z - e_i = \{z-e_i \mid z \in Z, \, i \in \supp(z)\}.
\]

Suppose $N$ is a finite, partially cancellative nilsemigroup.  An \emph{outer Betti element} of~$N$ is a subset $B \subseteq \mathsf{Z}_N(\infty)$ such that
\begin{enumerate}[(i)]
\item for each $i \in \supp(B)$, $B - e_i = \mathsf{Z}_N(p)$ for some $p \in N \setminus \{\infty\}$, and
\item the graph $\nabla_B$ is connected.
\end{enumerate}

A \emph{numerical semigroup} $S$ is an additive subsemigroup of $(\ZZ_{\geq 0}, +)$ that is cofinite and contains $0$.  Numerical semigroups have a unique minimal generating set, the size of which we call the \emph{embedding dimension}, and the smallest element of which we call the \emph{multiplicity}. Letting $m$ be an element of $S$, the Ap\'ery set of $S$ is the set 
\[
\Ap(S;m) = \{n \in S \mid n - m \notin S\}
\]
containing the smallest element of $S$ in each equivalence class modulo $m$.  
Let $\approx$ denote the congruence on $S$ setting $a \approx b$ whenever $a = b$ or $a,b \notin \Ap(S;m)$.  The~quotient semigroup $S/\app$ is a nilsemigroup with one non-nil element for each element of $\Ap(S;m)$.  The \emph{Kunz nilsemigroup} of $S$ is given by $N = \ZZ_m \cup \{\infty\}$ as sets, and is obtained from $S/\app$ by replacing each non-nil element by its equivalence class in $\ZZ_m$.  

\begin{thm}[{\cite{kunzfaces3}}]\label{t:minprescard}
If $\rho$ is a minimal presentation of a numerical semigroup $S$, then $|\rho| = |\rho'| + \beta$, where $\rho'$ is any minimal presentation for the Kunz nilsemigroup $N$ of $S$, and $\beta$ is the number of outer Betti elements of $N$.  
\end{thm}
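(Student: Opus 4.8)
The plan is to relate a minimal presentation of $S$ directly to the combinatorial data of its Kunz nilsemigroup $N$ by splitting trades of $S$ into two types: those whose common value lies in $\Ap(S)$, and those whose common value does not (i.e.\ maps to $\infty$ in $N$). First I would set up the factorization framework for $S$ with respect to its minimal generators $m = n_0, n_1, \ldots, n_k$, and observe that the congruence $\approx$ on $S$ lets us compare $\mathsf Z_S(s)$ with $\mathsf Z_N(\bar s)$: for $s \in \Ap(S)$ the factorizations of $s$ in $S$ (after discarding the $n_0$-coordinate, or rather tracking it carefully) correspond to factorizations of its class in $N$, while all $s \notin \Ap(S)$ collapse to the single nil element. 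The key structural fact I would invoke is the graph-theoretic description of minimal presentations (Rosales-style): a minimal presentation of a finitely generated partly cancellative semigroup has, for each element $t$, exactly $c(t)-1$ trades supported at $t$, where $c(t)$ is the number of connected components of the factorization graph $\nabla_t$; summing over all $t$ gives $|\rho|$.

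The main body of the argument is then a bijective/bookkeeping comparison of connected components of factorization graphs. For $t \in \Ap(S)$, I would argue that $\nabla_t$ (in $S$) has the same number of connected components as $\nabla_{\bar t}$ (in $N$), since the generator $n_0 = m$ plays no essential role in connecting factorizations of an Ap\'ery element — subtracting copies of $m$ stays inside the same residue class and cannot leave $\Ap(S)$ downward in a way that changes connectivity — so these elements contribute exactly $|\rho'|$ total to $|\rho|$. The remaining contribution comes from elements $s \notin \Ap(S)$, and here is where the outer Betti elements enter: the trades of $S$ whose common value is a non-Ap\'ery element, taken modulo the trades already accounted for, should be in bijection with (a spanning set of) the connectivity data of $\mathsf Z_N(\infty)$, and the minimal such data is precisely indexed by the outer Betti elements of $N$ as defined in the excerpt — a subset $B \subseteq \mathsf Z_N(\infty)$ with $\nabla_B$ connected and each $B - e_i = \mathsf Z_N(p)$ for $p \ne \infty$ records exactly one ``new'' relation gluing together factorizations that become identified only upon passing through $\infty$. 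I would make this precise by showing that $\beta$ equals the number of extra trades (beyond those detected by Ap\'ery elements) needed to generate the full congruence on $S$, i.e.\ that the ``nil part'' of the congruence contributes $\beta$ to the Betti count.

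The hard part will be the second comparison: cleanly showing that the trades of $S$ with non-Ap\'ery common value contribute \emph{exactly} $\beta$, and not over- or under-counting. The subtlety is that a single non-Ap\'ery element $s$ of $S$ may have a large, highly disconnected factorization graph, but many of those components are already ``pre-glued'' by Ap\'ery-element trades once one adds back copies of $m$; one must quotient correctly to see that what remains is governed by the $\nabla_B$-connectivity of $\mathsf Z_N(\infty)$, and that the defining condition (i) on outer Betti elements ($B - e_i = \mathsf Z_N(p)$ for $p \ne \infty$) is exactly what guarantees the corresponding trade of $S$ is \emph{indecomposable}, i.e.\ genuinely needed in a minimal presentation. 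I expect this to require a careful induction peeling off generators of the congruence by the value of their common element, ordered so that Ap\'ery elements are handled first and the nil element last, at which point the outer Betti elements of $N$ account for the final increment $\beta$.
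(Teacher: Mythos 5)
This theorem is quoted from \cite{kunzfaces3} and is not proved in the present paper, so there is no in-paper argument to compare against; I evaluate the proposal on its own terms. Your overall plan --- use $|\rho| = \sum_t (c(\nabla_t)-1)$, split the sum by whether $t \in \Ap(S)$, and match the two halves to $|\rho'|$ and $\beta$ --- is the right one. The Ap\'ery half is cleaner than you make it sound: if $s \in \Ap(S)$ then $s - m \notin S$, so no factorization of $s$ in $S$ uses the generator $m$ at all; dropping the identically-zero $m$-coordinate gives a literal identification $\mathsf{Z}_S(s) = \mathsf{Z}_N(\overline{s})$ and hence $\nabla_s = \nabla_{\overline{s}}$. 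There is nothing to ``track carefully'' or argue about connectivity.

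The step you flag as hard has a much more direct resolution than the induction you propose, and the missing observation is a simple one. For $s \notin \Ap(S)$, every factorization of $s$ that uses $m$ has $0$ in its support, so all such factorizations form a clique in $\nabla_s$ and hence lie in a single connected component; this component is nonempty since $s - m \in S$. Therefore $c(\nabla_s) - 1$ is exactly the number of components of $\nabla_s$ that are $m$-free. These $m$-free components are in bijection with the outer Betti elements of $N$ lying over $s$: if $C$ is such a component and $i \in \supp(C)$, then $s - n_i \in \Ap(S)$ (otherwise one builds an $m$-using factorization of $s$ adjacent to $C$, a contradiction), whence $C - e_i = \mathsf{Z}_S(s - n_i) = \mathsf{Z}_N(\overline{s - n_i})$ with the right side non-nil, so $C$ satisfies both outer Betti conditions; conversely any outer Betti element $B$ consists of $m$-free factorizations of a single $s \notin \Ap(S)$ and one checks $B$ is closed under adjacency in $\nabla_s$, so $B$ is exactly such a component. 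Summing over $s$ gives $\beta$. Without the clique observation your worry about over-counting and ``pre-gluing'' is genuine and unresolved, and the proposed ``induction peeling off congruence generators'' is both unnecessary and would be hard to carry out; with it, the count is immediate and no induction is needed.
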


We briefly illustrate Theorem~\ref{t:minprescard} and the definition preceding it in the following example.  See~\cite{minprescard,kunzfaces3} for a more thorough introduction to outer Betti elements and their relationship to minimal presentations of numerical semigroups.  

\begin{example}\label{e:outerbettis}
Let $S = \<13, 53, 15, 35\>$.  One can verify computationally~\cite{numericalsgpsgap} that
\[
\Ap(S;13) = \{0, 53, 15, 68, 30, 70, 45, 85, 60, 35, 75, 50, 90\},
\]
where the elements are listed in order of their equivalence classes modulo $m = 13$.  
The~partially ordered set~(c) depicted in Figure~\ref{f:3dfan} encodes the divisibility relations of the non-nil elements of Kunz nilsemigroup $N$ of $S$.  For instance, 3 precedes~1 in the diagram since $68 - 53 = 15 \in S$, but 3 does not precede 5 since $70 - 68 = 2 \notin S$.  Moreover, $N = \<1,2,9\>$, as these are the elements covering 0.  

Write $\Ap(S;13) = \{0, a_1, \ldots, a_{12}\}$ with each $a_i \equiv i \bmod 13$.  One can check that $\mathsf Z_N(11) = \{(0,1,1)\}$ since $a_{11} = 50 = 15 + 35 = a_2 + a_9$, and in fact this is the only factorization of $a_{11}$.  Additionally, $(1,2,0) \in \mathsf Z_N(\infty)$ since $a_1 + 2a_2 = 83 > 70 = a_4$.  

There are 6 outer Betti elements, each consisting of a single factorization from among
\[
(2,0,0), 
\quad
(1,2,0), 
\quad
(0,7,0),
\quad
(1,0,1), 
\quad
(0,2,1), 
\quad \text{and} \quad
(0,0,3),
\]
and each can be seen as a factorization in $\mathsf Z_N(\infty)$ that is minimal under the component-wise partial order.  For instance, 
\[
\mathsf Z_N(11) = \{(0,1,1)\}, 
\qquad
\mathsf Z_N(4) = \{(0,2,0)\},
\qquad \text{and} \qquad
\{(0,2,1)\} \subseteq \mathsf Z_N(\infty),
\]
imply $\{(0,2,1)\}$ is an outer Betti element of $N$.  
\end{example}

\begin{figure}[t!]
\begin{center}
\includegraphics[height=1.8in]{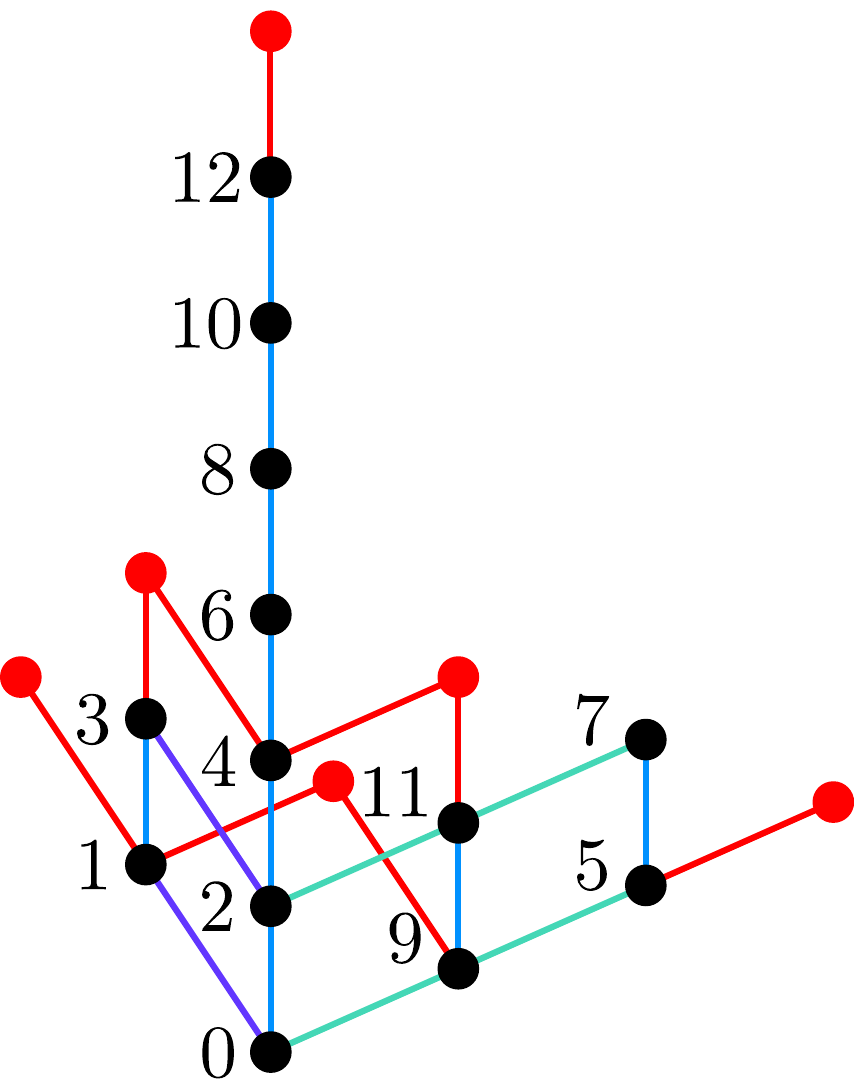}
\end{center}
\caption{Kunz poset in Example~\ref{e:outerbettis} with outer Betti elements in red.} 
\label{f:outerbettis}
\end{figure}

A \emph{rational polyhedral cone} $C \subseteq \RR^d$ is the set of solutions to a finite set of linear inequalties with rational coefficients, i.e.
\[
C = \{x \in \RR^d \mid Ax \ge 0\}
\]
for some rational matrix $A$.  We say $C$ is \emph{strongly convex} (or \emph{pointed}) if it does not contain any positive dimension linear subspace of $\RR^d$, and the \emph{dimension} of $C$ is the vector space dimension $\dim C = \dim_\RR \Span_{\RR}C$. If none of the rows of $A$ can be omitted without changing $C$, we call $A$ the \emph{H-description} or \emph{facet description} of $C$, and refer to each inequality therein as a \emph{defining inequality} or \emph{facet inequality} of $P$.  If $\dim C = d$, then the facet description is unique up to the reordering and scaling of rows.  
Given a facet inequality $a_1x_1 + \cdots + a_dx_d \geq 0$ of $C$, the intersection of $C$ with the hyperplane $a_1x_1 + \cdots + a_dx_d = 0$ is called a \emph{facet} of $C$. Any intersection of facets is called a \emph{face} of~$C$; note that the condition of strong convexity is equivalent to $\{0\}$ being a face of~$C$. Given a face $F \subseteq C$, the \emph{relative interior} of $F$, denoted $F^{\circ}$, is the set of points in $F$ that do not lie in any proper faces of $F$.
A finite collection $\mathcal F$ of polyhedral cones is called a \emph{polyhedral fan} if 
\begin{enumerate}[(i)]
\item 
for any $C \in \mathcal{F}$, every face of $C$ is also in $\mathcal{F}$, and

\item 
the intersection of any $C,D \in \mathcal{F}$ is a face of both and lies in $\mathcal F$.

\end{enumerate}
The elements of a fan $\mathcal F$ are often called its \emph{faces}.  
A fan is \emph{pure} if its maximal faces (under containment) have the same dimension, and in this case, we refer to the maximal faces as \emph{chambers}.  

For $m \geq 2$, the strongly convex cone $\cC_m \subseteq \RR^{m-1}_{\geq 0}$ with facet inequalities
\[
x_i + x_j \geq x_{i+j}
\qquad \text{for} \qquad
i,j \in \ZZ_m \setminus \{0\}
\qquad \text{with} \qquad
i + j \ne 0
\]
is called the \emph{Kunz cone}.  
A point $z = (z_1, \ldots, z_{m-1}) \in \cC_{m} \cap \ZZ^{m-1}$ is an \emph{Ap\'ery point} if each $z_i \equiv i \bmod m$.  
The following is the culmination of results from~\cite{kunzfaces3,kunzfaces1,kunz}.  

\begin{thm}\label{t:kunznilsemigroupface}
The Ap\'ery points of $\cC_m$ are in bijective correspondence with numerical semigroups containing $m$; more specifically, $z \mapsto S$ where $\Ap(S;m) = \{0, z_1,\ldots, z_{m-1}\}$.  
Fix a face $F \subseteq \cC_m$, the set
$$H = \{0\} \cup \{i : x_i = 0 \text{ for all } x \in F\} \subseteq \ZZ_m$$
is a subgroup of $\ZZ_m$ (called the \emph{Kunz subgroup} of $F$).  Define a binary operation $\oplus$ on~$N = (\ZZ_m/H) \cup \{\infty\}$ so that $\infty$ is nil, and for any nonzero $a,b \in \ZZ_m$,
\[
\overline a \oplus \overline b = \begin{cases}
a+b & \text{if } x_a + x_b = x_{a+b} \text{ for all } x \in F; \\
\infty & \text{otherwise.}
\end{cases}
\]
Under the above definition, $(N, \oplus)$ is a partly cancellative nilsemigroup (called the \emph{Kunz nilsemigroup} of $F$), and if the Ap\'ery point of a numerical semigroup $S$ lies in $F$, then $H = \{0\}$ and the Kunz nilsemigroup of $S$ equals the Kunz nilsemigroup of $F$.  
\end{thm}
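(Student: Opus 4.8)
The plan is to handle the four assertions separately. Throughout I would use the convention $x_0 = 0$, so that every $x \in \cC_m$ becomes subadditive on all of $\ZZ_m$: $x_a + x_b \ge x_{a+b}$ for \emph{all} $a, b \in \ZZ_m$ (the cases where some index is $0$ follow from $x \ge 0$). For the bijection, which is Kunz's theorem and which I would cite: from a numerical semigroup $S$ with $m \in S$, set $z_i$ to be the element of $\Ap(S)$ in the residue class of $i$; since $z_i \equiv i \not\equiv 0$ one has $z_i \ge 1$, and closure of $S$ under addition gives $z_i + z_j \ge z_{i+j}$, so $z$ is an Ap\'ery point. Conversely, from an Ap\'ery point $z$ (put $z_0 = 0$) the set $S = \bigcup_{i \in \ZZ_m}(z_i + m\ZZ_{\ge 0})$ is closed under addition because $z_i + z_j \equiv i+j$ and $z_i + z_j \ge z_{i+j}$, hence is a numerical semigroup with $m \in S$ and $\Ap(S) = \{0, z_1, \dots, z_{m-1}\}$; a short computation shows the two assignments are mutually inverse.

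Turning to a face $F$: if $i, j \in H \setminus \{0\}$ and $i + j \not\equiv 0$, then $0 = x_i + x_j \ge x_{i+j} \ge 0$ for every $x \in F$, so $x_{i+j} = 0$ for all $x \in F$ and $i + j \in H$; since $0 \in H$ and the case $i+j \equiv 0$ is trivial, $H$ is a nonempty subset of the finite group $\ZZ_m$ closed under addition, hence a subgroup. For the claim that $(N, \oplus)$ is a partly cancellative nilsemigroup, the main point is well-definedness of $\oplus$ on cosets, which follows from the observation that $x_{a+h} = x_a$ for all $x \in F$ whenever $a \notin H$ and $h \in H$ (apply subadditivity to $(a,h)$ and to $(a+h,-h)$, using $x_h = x_{-h} = 0$). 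Associativity I would deduce from the remark that, because $x_a+x_b \ge x_{a+b}$ and $x_{a+b}+x_c \ge x_{a+b+c}$, the condition ``$x_a+x_b=x_{a+b}$ and $x_{a+b}+x_c=x_{a+b+c}$ for all $x\in F$'' is equivalent to ``$x_a+x_b+x_c = x_{a+b+c}$ for all $x\in F$'', which is independent of how $a,b,c$ are grouped; tracing each association of $\overline a\oplus\overline b\oplus\overline c$ through the definition then gives the same answer. The class $\overline 0$ is an identity since $x_0+x_a = x_a$; if $a \notin H$ and $n$ is the order of $a$ in $\ZZ_m$, then $\overline a^{\oplus n}$, which is either $\overline{na} = \overline 0$ or $\infty$, cannot equal $\overline 0$ (that would force $n x_a = x_0 = 0$ for all $x\in F$, i.e.\ $a \in H$), so $\overline a^{\oplus n} = \infty$ and $\overline a$ is nilpotent; this also makes $\overline 0$ the only unit. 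Finally, if $\overline a\oplus\overline b = \overline a\oplus\overline c \ne \infty$, then $\overline{a+b} = \overline{a+c}$, and cancelling $\overline a$ in $\ZZ_m/H$ gives $\overline b = \overline c$, so $N$ is partly cancellative.

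For the last assertion, if the Ap\'ery point $z$ of $S$ lies in $F$, then no coordinate of $z$ vanishes (as $z_i \equiv i \not\equiv 0$), so the condition defining $H$ fails at $x = z$ for each $i \ne 0$; hence $H = \{0\}$ and $\ZZ_m/H = \ZZ_m$, which is already the underlying set of the Kunz nilsemigroup of $S$. Unwinding the definition of the latter, in it the classes of $i$ and $j$ multiply to $\overline{i+j}$ exactly when $z_i + z_j \in \Ap(S)$, equivalently when $z_i + z_j = z_{i+j}$ (the case $i+j\equiv 0$ giving $\infty$ automatically, since $z_i+z_j$ is then a positive multiple of $m$), whereas $i \oplus j = \overline{i+j}$ exactly when $x_i + x_j = x_{i+j}$ for all $x \in F$. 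Evaluating at $x = z$ shows the latter implies the former; the reverse implication --- that $z_i + z_j = z_{i+j}$ forces $x_i + x_j = x_{i+j}$ on all of $F$ --- is the step I expect to be the main obstacle. It is exactly the statement that $F$, being the face in whose relative interior $z$ lies, is cut out of $\cC_m$ by precisely the facet inequalities tight at $z$ (equivalently: a facet inequality tight at a relative-interior point of a face is tight on the whole face). Granting this standard polyhedral fact, ``tight at $z$'' and ``tight throughout $F$'' coincide, the two operations agree, and the two Kunz nilsemigroups are equal. Everything else above is routine inequality bookkeeping; this last identification, together with Kunz's bijection, is the substance I would attribute to \cite{kunz,kunzfaces1,kunzfaces3}.
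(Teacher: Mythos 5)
The paper does not supply a proof for this theorem --- it appears as background, explicitly described as ``the culmination of results from'' \cite{kunzfaces3,kunzfaces1,kunz} --- so there is no in-paper argument to compare yours against; what you have written is a self-contained verification of the cited result, and it is correct.

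You rightly single out the only genuinely non-routine step, in the last assertion: that $z_i+z_j=z_{i+j}$ at the single Ap\'ery point $z$ forces $x_i+x_j=x_{i+j}$ throughout $F$. For that implication one must read ``the Ap\'ery point of $S$ lies in $F$'' as ``lies in the relative interior of $F$'' (equivalently, $F$ is the smallest face containing $z$); as you observe, without that reading the statement would be ambiguous, since an Ap\'ery point in a proper face $F'\subsetneq F$ produces the Kunz nilsemigroup of $F'$, not of $F$. With that reading, the step reduces, exactly as you say, to the standard polyhedral fact that a defining inequality tight at a relative-interior point of a face is tight on the entire face. The remaining pieces --- Kunz's bijection via $S=\bigcup_i(z_i+m\ZZ_{\ge0})$; $H$ a subgroup of the finite group $\ZZ_m$ because it is a nonempty subset closed under addition; well-definedness of $\oplus$ on cosets from $x_{a+h}=x_a$; associativity via the telescoping-subadditivity equivalence ``$x_a+x_b=x_{a+b}$ and $x_{a+b}+x_c=x_{a+b+c}$ iff $x_a+x_b+x_c=x_{a+b+c}$''; nilpotence of each nonzero class at its additive order; and partial cancellativity by cancelling in $\ZZ_m/H$ --- are all sound and are the arguments one would expect to find in the cited sources.
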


In view of the above theorem, we say a numerical semigroup $S$ lies in the face $F \subseteq \cC_m$, and write $S \in F$, if the Ap\'ery point corresponding to $S$ lies in $F$.

\section{Kunz fans}
\label{sec:kunzfan}

Throughout this section, fix $A = \{p_1, \ldots, p_k\} \subseteq \ZZ_m \setminus \{0\}$ with $\gcd(A, m) = 1$, and let $p:\RR^{m-1} \to \RR^k$ denote the linear map that projects each point in $\RR^{m-1}$ onto the coordinates whose indices lie in $A$, i.e., 
$$p(y) = (y_{p_1}, \ldots, y_{p_k}).$$

\begin{defn}\label{d:kunzfan}
Let $\mathcal F(m;A)$ denote the set of faces $F \subseteq \cC_m$ for which each atom of the Kunz nilsemigroup $N$ of $F$ has a representative in $A$.  
The \emph{Kunz fan} of $A$ is the set
$$
\mathcal G(m;A) = \{p(F) : F \in \mathcal F(m;A)\}
$$
of cones in $\RR^k$ (we prove in Theorem~\ref{t:purefan} that $\mathcal G(m;A)$ is indeed a fan).  Note that under this definition, $|A|$ may exceed the number of atoms of $N$, but this allowance is necessary to ensure $\mathcal G(m;A)$ is a fan; see Example~\ref{e:2dfan} for a discussion of this subtlety.  
\end{defn}

One of the main results of this section is that $\mathcal G(m;A)$ is pure, and that the maximal faces of $\mathcal G(m;A)$ with respect to containment are precisely those whose corresponding nilsemigroup lies in the following family.  

\begin{defn}\label{d:staircaseposet}
A finite, partly cancellative nilsemigroup $N$ is a \emph{staircase} nilsemigroup if every non-nil element uniquely factors as a sum of atoms, and a numerical semigroup $S$ is called \emph{staircase} if its Kunz nilsemigroup is staircase.  As each outer Betti element of a staircase nilsemigroup $N$ consists of a single factorization of $\infty$, when there can be no confusion we refer to each such factorization as an outer Betti element of $N$.  
\end{defn}

A numerical semigroup $S$ is staircase if and only if every element of $\Ap(S;m)$ factors uniquely.  
In this case, $S$ is said to have \emph{Ap\'ery set of unique expression}; such semigroups have been studied in the literature~\cite{rosalesapery} and were central to the constructions in~\cite{minprescard}.  

The examples in this section examine two Kunz fans in detail, and reference results in this section and subsequent sections.  We encourage the reader to peek ahead at these results while reading these examples, as they put the landscape of Sections~\ref{sec:kunzfan}, \ref{sec:facetouterbetti}, and~\ref{sec:kunzwalk} in perspective.  

\begin{example}\label{e:2dfan}
Let $m = 20$ and $A = \{6,11\}$.  The Kunz fan $\mathcal G(m;A)$ is depicted in Figure~\ref{f:2dfan} alongside the staircase Kunz nilsemigroups corresponding to its chambers, with outer Betti elements depicted in red.  Let $N$ denote the Kunz nilsemigroup corresponding to chamber~(b).  As~we will see in Proposition~\ref{p:circleoflightsinverse} below, any point $(x_6,x_{11})$ in the interior of chamber~(b) must satisfy $7x_6 > 2x_{11}$, as $7 \cdot 6 \equiv 2 \bmod 20$ and $2 \cdot 11 \equiv 2 \bmod 20$, but $(0,2) \in \mathsf Z_N(2)$ while $\{(7,0)\}$ is an outer Betti element of $N$.  

Let's examine the facets on the boundary of the fan $\mathcal G(m;A)$, as these each identify subtleties in Definition~\ref{d:kunzfan}.  One is defined by $x_6 \le 6x_{11}$, which must be satisfied by every point in a face of $\mathcal G(m;A)$ by Proposition~\ref{p:circleoflightsinverse} since for any point $y \in \cC_m$ that projects to a point $(x_6, x_{11})$ in a face of $\mathcal G(m;A)$, each coordinate $y_i$ is a non-negative integral combination of $x_6$ and $x_{11}$.  Note that the Kunz nilsemigroup $N$ of this facet has $N = \<11\>$, since $x_6 = 6x_{11}$ implies 6 is a multiple of 11 in $N$; this is why we do not require $N$ to possess an atom for each element of $A$ in Definition~\ref{d:kunzfan}.  

The other facet is defined by $x_6 \ge 0$; since $\gcd(6,20) > 1$ and $\gcd(6,11,20) = 1$, for each $x_{11} > 0$ one can locate points $(x_6, x_{11})$ in the interior of chamber~(a) wherein $x_{6}$ is arbitrarily small.  The Kunz subgroup $H$ of this facet is nontrivial since $x_6 = 0$ for every point therein; this is why we only require each atom of the Kunz poset $N$ to have a representative in $A$ in Definition~\ref{d:kunzfan}, rather than requiring $A$ to equal the set of atoms of $N$.  
\end{example}

\begin{figure}[t!]
\begin{center}
\includegraphics[width=6in]{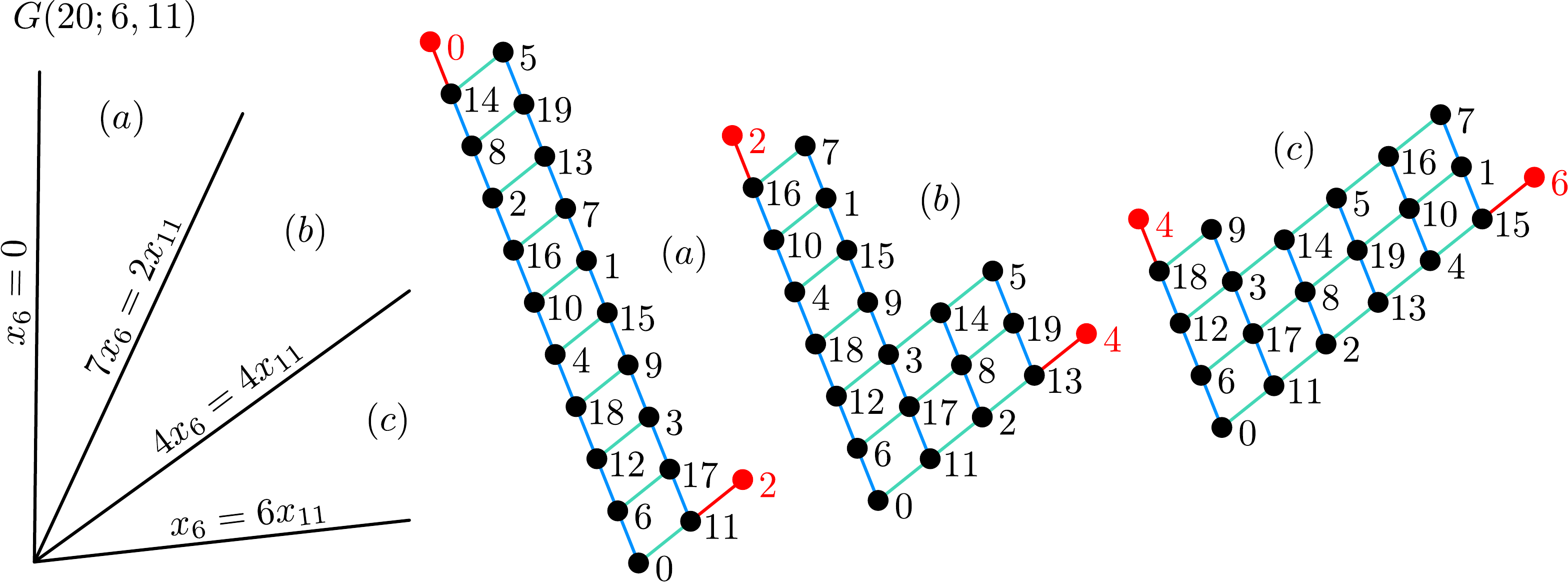}
\end{center}
\caption{Posets from the chambers of the Kunz fan $\mathcal G(20; 6, 11)$.}
\label{f:2dfan}
\end{figure}

Consider the piecewise linear map $q:\RR_{\ge 0}^k \to \RR_{\ge 0}^{m-1}$ given by $q(x) = y$, where
$$y_i = \min\{c \cdot x \mid c \in \ZZ_{\ge 0}^k \text{ with } c_1p_1 + \cdots + c_kp_k \equiv i \bmod m\}.$$

\begin{prop}\label{p:circleoflightsinverse}
Fix a face $F \in \mathcal F(m; A)$ with Kunz nilsemigroup $N$ and Kunz subgroup $H$.  The map $p$ is injective on $F$, and if $x \in p(F)$, then $y = q(x)$ satisfies
$$y_i = z_1 x_1 + \cdots + z_k x_k$$
for any factorization $z \in \mathsf Z_N(i)$.  In particular, $q$ restricts to a linear map on $p(F)$, and $q(p(y)) = y$ for every $y \in F$.  
\end{prop}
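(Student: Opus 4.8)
The plan is to establish the three assertions in the order stated, using the combinatorics of the Kunz nilsemigroup $N$ of $F$ and the geometry of $\cC_m$. I would start by recording the key bridge between the face $F$ and its nilsemigroup: by Theorem~\ref{t:kunznilsemigroupface}, for $y \in F$ and nonzero $a,b \in \ZZ_m$ we have $y_a + y_b = y_{a+b}$ exactly when $\overline a \oplus \overline b = \overline{a+b}$ in $N$, and $y_a = 0$ for every $a \in H$. Consequently, if $z = (z_1,\ldots,z_k) \in \mathsf Z_N(i)$ is \emph{any} factorization of (the class of) $i$ over the atoms indexed by $A$, then telescoping the additive relations in $N$ along this factorization gives $y_i = z_1 y_{p_1} + \cdots + z_k y_{p_k} = z \cdot p(y)$ for every $y \in F$. (One must note that $N$ does have an atom with representative among $p_1,\ldots,p_k$ for each generator appearing in $z$, since $F \in \mathcal F(m;A)$, and that if some $p_j$ is not an atom then $z$ simply won't use coordinate $j$; also $y_i = 0$ when $\overline i = \overline 0$ in $\ZZ_m/H$, consistent with the empty factorization.) In particular $y_i$ is determined by $p(y)$ for every $i \in \ZZ_m \setminus \{0\}$, which immediately gives injectivity of $p$ on $F$: two points of $F$ agreeing in the coordinates indexed by $A$ agree in all coordinates.

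Next I would address the claim that $q(p(y)) = y$ for $y \in F$, and the description of $q$ on $p(F)$. Fix $y \in F$ and set $x = p(y) \in \RR^k_{\ge 0}$. By definition $q(x)_i = \min\{c \cdot x : c \in \ZZ_{\ge 0}^k,\ c_1 p_1 + \cdots + c_k p_k \equiv i \bmod m\}$. On one hand, every factorization $z \in \mathsf Z_N(i)$ is one of the competing vectors $c$ in this minimum (its support lies in $\supp(A)$ and $\sum z_j p_j \equiv i$), so $q(x)_i \le z \cdot x = y_i$ by the telescoping identity above. On the other hand, for the reverse inequality: given any $c \in \ZZ_{\ge 0}^k$ with $\sum c_j p_j \equiv i \bmod m$, the point $y \in F \subseteq \cC_m$ satisfies the Kunz inequalities $y_a + y_b \ge y_{a+b}$, so by iterating subadditivity along the sum $\sum c_j p_j$ we get $c \cdot x = \sum_j c_j y_{p_j} \ge y_i$ (here using $y_{p_j} = x_j$ and, when $\sum c_j p_j \equiv 0$, that $y_0$ should be read as $0$). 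Hence $q(x)_i \ge y_i$ as well, so $q(x)_i = y_i$ for all $i$, i.e. $q(p(y)) = y$. This simultaneously shows that on $p(F)$ the value $q(x)_i$ equals the linear form $z \cdot x$ for any $z \in \mathsf Z_N(i)$, hence $q$ restricted to $p(F)$ is given coordinatewise by linear forms and is therefore linear.

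The main obstacle is the reverse inequality $c \cdot x \ge y_i$ used above — i.e., verifying that the subadditive (Kunz) inequalities really do give $q(x)_i \ge y_i$ rather than something smaller — together with handling the bookkeeping at the edge cases: when $i$ or a partial sum $\sum c_j p_j$ is $\equiv 0 \bmod m$ (so the relevant ``coordinate'' is $0$, not a genuine coordinate of $y$), and when $A$ contains elements that are not atoms of $N$ (so $\mathsf Z_N(i)$ might be empty for the class of $i$, namely when $\overline i = \overline 0$ in $\ZZ_m/H$). I would dispatch the subadditivity step by a straightforward induction on $c_1 + \cdots + c_k$: peel off one generator $p_j$ with $c_j > 0$, apply $y_a + y_{p_j} \ge y_{a + p_j}$ with $a \equiv \sum c_\ell p_\ell - p_j$, and invoke the inductive hypothesis, being careful to treat the case $a \equiv 0$ by convention $y_0 = 0$ (which is legitimate since $0 + y_{p_j} = y_{p_j}$). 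The edge cases for classes in $H$ are handled by the observation that $y_i = 0$ there, and that the constant factorization $c = 0$ attains $q(x)_i = 0$; everything else is routine.
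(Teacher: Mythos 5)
Your overall approach is genuinely different from the paper's, and the difference is worth flagging. You prove the identity $q(x)_i = y_i$ as a two-sided inequality: the $\le$ direction because any $z \in \mathsf Z_N(i)$ is a candidate vector $c$ in the minimum defining $q$, together with the telescoping identity $z \cdot x = y_i$; and the $\ge$ direction by iterating the Kunz inequalities $y_a + y_b \ge y_{a+b}$ along \emph{any} candidate $c$, via induction on $c_1 + \cdots + c_k$. The paper instead shows directly (by a minimality-based induction) that the \emph{minimizer} $c$ in the definition of $q(x)_i$ lies in $\mathsf Z_N(i)$, and only then invokes the telescoping identity to conclude $q(x)_i = y_i$. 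Your $\ge$-direction argument is arguably cleaner because it requires only that $y$ satisfies the defining inequalities of $\cC_m$, not any structural information about the minimizer; the paper's argument is a bit heavier but yields the extra information that the minimizing $c$ is itself a factorization of $\overline{i}$, a fact that is implicitly reused in Remark~\ref{r:circleoflightsinverse} when the circle-of-lights algorithm is said to produce the Kunz nilsemigroup as a byproduct.

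There is, however, a concrete error in your handling of the edge case where the residue class of $i$ lies in the Kunz subgroup $H$. You write that ``the constant factorization $c = 0$ attains $q(x)_i = 0$,'' but $c = 0$ has $c_1p_1 + \cdots + c_kp_k \equiv 0 \bmod m$, so it is \emph{not} a competing vector in the minimum defining $q(x)_i$ unless $i \equiv 0 \bmod m$. What one actually needs for $i \in H \setminus \{0\}$ is a $c \in \ZZ_{\ge 0}^k$ with $\sum_j c_j p_j \equiv i \bmod m$ that is supported only on indices $j$ with $p_j \in H$ (so that $c \cdot x = 0$, since $x_j = y_{p_j} = 0$ for such $j$); this requires that such $c$ exist, i.e.\ that $H$ lie in the subsemigroup of $\ZZ_m$ generated by $A \cap H$. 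That is a nontrivial structural fact about the face $F$ and is not granted by $c = 0$. Relatedly, your $\le$-direction claim that any $z \in \mathsf Z_N(i)$ is a valid candidate in the minimum rests on $\sum_j z_j p_j \equiv i \bmod m$, whereas a factorization in $N = (\ZZ_m/H) \cup \{\infty\}$ a priori gives only $\sum_j z_j p_j \equiv i \pmod{H}$; this congruence gap is precisely where the nontrivial-$H$ subtleties live (the paper signals them in Example~\ref{e:2dfan}). Your argument is airtight when $H$ is trivial, which is the situation used in the proof of Theorem~\ref{t:purefan}, but you should not wave away the $H \ne \{0\}$ cases as ``routine,'' and the specific fix you propose for them does not work as written.
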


\begin{proof}
For each nonzero $i \in \ZZ_m$, Theorem~\ref{t:kunznilsemigroupface} implies one of the following must hold:\ (i) $i \in H$ and $y_i = 0$; (ii) $i \in a + H$ for some $a \in A$ and $y_i = y_a$; or (iii) for any factorization $z \in \mathsf Z_N(i)$, we have 
$$y_i = z_1 y_{p_1} + \cdots + z_k y_{p_k}$$
for all $y \in F$.  As such, for any $y, y' \in F$, if $p(y) = p(y')$, then $y_a = y_a'$ for all $a \in A$, and thus $y = y'$.  This proves $p$ is injective on $F$.  

Next, suppose $p(y) = x$, and let $y' = q(x)$.  Fix $i$, and suppose that $c \in \ZZ_{\ge 0}^k$ satisfies $y_i' = c \cdot x$.  We claim $c \in \mathsf Z_N(i)$.  Indeed, if $i = 0$, then $y_i = 0$, and if $i \in a + H$ for some $a \in A$, then $y_i = y_a$.  For all other cases, fix $j$ with $c_j > 0$.  This means $i + H$ covers $(i - p_j) + H$ in $N$, and by minimality of $c \cdot x$, $(c - e_j) \cdot x = y_{i - p_j}'$.  By induction, we may assume $c - e_j \in \mathsf Z_N(j)$, and so we have $c = (c - e_j) + e_j \in \mathsf Z_N(i)$ since $y_{i - p_j}' + y_{p_j} = y_i'$.  This proves the claim.  By the preceding paragraph, we now have $y' = y$, and the remaining claims all immediately follow.  
\end{proof}

It is not hard to show that $q(\RR_{\ge 0}^k) \subseteq \cC_m$, although the injectivity in Proposition~\ref{p:circleoflightsinverse} is lost if one considers input outside of the faces in $\mathcal G(m;A)$.  

\begin{remark}\label{r:circleoflightsinverse}
Given a point $x \in \RR_{\ge 0}^k$, one may compute $q(x)$ using the circle of lights algorithm~\cite{wilfconjecture}, which is used to compute the Ap\'ery set of a numerical semigroup from a given generating set.  The version of the algorithm in \cite[Algorithm~7.1]{kunzfaces1} is faster, and as a byproduct computes the set of factorizations of each element of the Kunz nilsemigroup corresponding to the face of $\cC_m$ containing $q(x)$.  
\end{remark}

We next consider the cone
$$C(m;A) = \{x \in \RR_{\ge 0}^k : x_i \le c \cdot x  \text{ for all } c \in \ZZ_{\ge 0}^k \text{ with } c_1p_1 + \cdots + c_kp_k \equiv p_i \bmod m\}.$$
Despite the fact that $C(m;A)$ is defined using an infinite collection of inequalities, only finitely many are necessary.  Indeed, if $c \in \ZZ_{\ge 0}^k$ has some $c_j \ge m$, then 
$$
c \cdot x \ge (c - me_j) \cdot x
\qquad \text{for all} \qquad
x \in \RR_{\ge 0}^k,
$$
so in the definition of $C(m;A)$ one may restrict to $c$ with coordinates in $[0, m-1]$.  In~particular, this means $C(m;A)$ is a rational polyhedral cone.  

\begin{example}\label{e:3dfan}
Let $m = 13$ and $A = \{1,2,9\}$.  The Kunz fan $\mathcal G(m; A)$ has five 3-dimensional chambers, the cross sections of which are depicted in Figure~\ref{f:3dfan} alongside the Kunz nilsemigroups corresponding to each chamber.  
Any point in the interior of a chamber of $\mathcal G(m; A)$ must satisfy
\[
x_1 < 3x_9,
\qquad
x_1 < 7x_2, 
\qquad
x_2 < 2x_1,
\qquad \text{and} \qquad
x_9 < x_1 + 4x_2,
\]
which arise from the ``minimal'' ways of expressing 1, 2, or 9 in $\ZZ_{13}$ as a sum of the other two and comprise the facets of $C(m; A)$.  
\end{example}

\begin{figure}[t!]
\begin{center}
\includegraphics[width=6in]{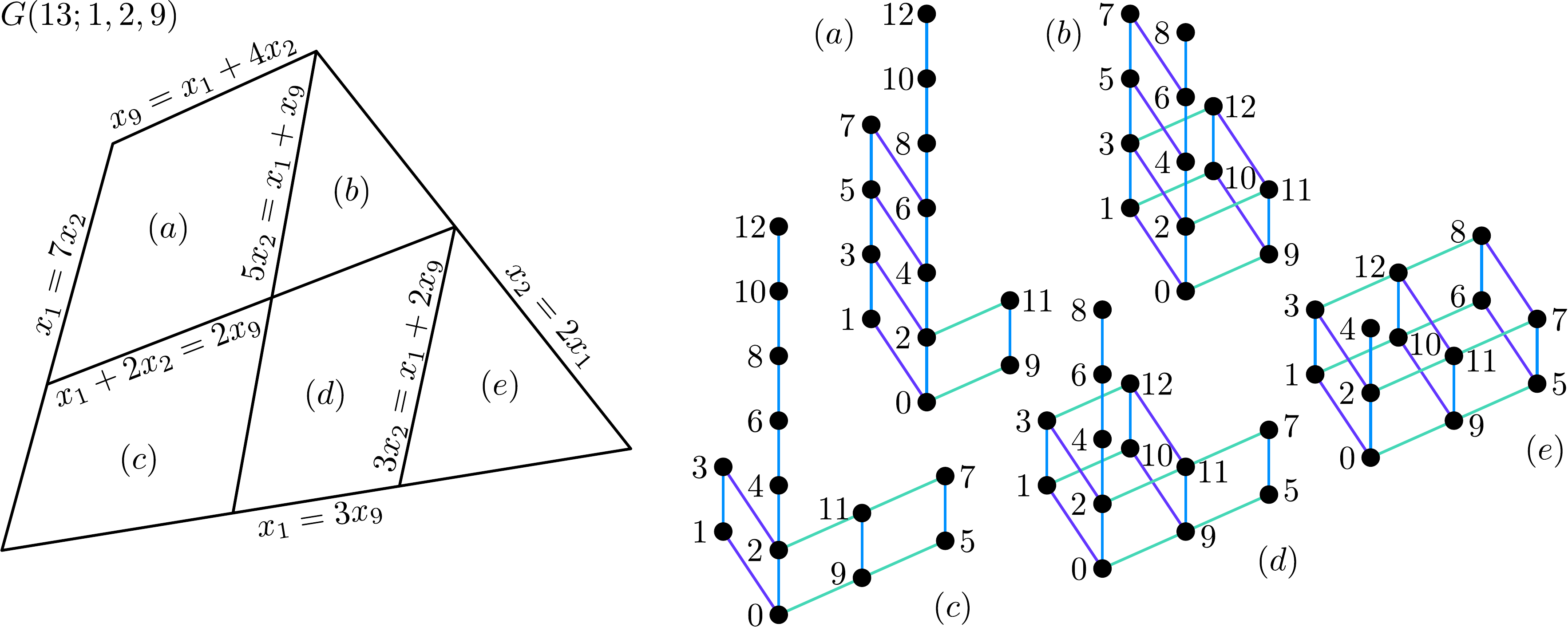}
\end{center}
\caption{Posets from the chambers of the Kunz fan $\mathcal G(13; 1,2 ,9)$.}
\label{f:3dfan}
\end{figure}

\begin{lemma}\label{l:interioratomsets}
For each $x \in C(m;A)^\circ$, the image $y = q(x)$ lies in a face $F \subseteq \cC_m$ with trivial Kunz subgroup, and the Kunz nilsemigroup $N$ of $F$ has atom set $A$.  
\end{lemma}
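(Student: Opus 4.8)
The plan is to show that for $x$ in the relative interior of $C(m;A)$, the point $y = q(x)$ lands in a face whose Kunz nilsemigroup has atom set exactly $A$ and whose Kunz subgroup is trivial. First I would identify the face $F \subseteq \cC_m$ containing $y$: since $q(\RR_{\ge 0}^k) \subseteq \cC_m$ (as remarked after Proposition~\ref{p:circleoflightsinverse}), $y$ lies in a well-defined face $F$, namely the smallest face containing $y$, characterized by which Kunz inequalities $x_i + x_j \ge x_{i+j}$ are equalities at $y$. Let $N$ be the Kunz nilsemigroup of $F$ and $H$ its Kunz subgroup. I want to pin down $N$ combinatorially via the defining formula $y_i = \min\{c \cdot x \mid c_1 p_1 + \cdots + c_k p_k \equiv i \bmod m\}$, which expresses $y_i$ as a tropical minimum over residue representations.

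The key step is to analyze the strict inequalities available when $x \in C(m;A)^\circ$. By definition of $C(m;A)$, every point satisfies $x_i \le c \cdot x$ for all $c$ with $c_1 p_1 + \cdots + c_k p_k \equiv p_i$, and the finiteness observation (restricting to $c$ with coordinates in $[0,m-1]$) shows $C(m;A)$ is cut out by finitely many such facet inequalities. For $x$ in the relative interior, all the inequalities that are not identically equalities on $C(m;A)$ become strict. In particular, I would argue that $x_i < c \cdot x$ whenever $c \ne e_i$ and $c_1 p_1 + \cdots + c_k p_k \equiv p_i$: the cases where this inequality is forced to be an equality on all of $C(m;A)$ would have to come from a genuine relation among the $p_i$ modulo $m$ that holds for trivial reasons, and the hypothesis $\gcd(A,m)=1$ together with the choice of $A$ as a generating-type set should preclude that for $c \ne e_i$ (this is the subtle point — see below). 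Granting this, the minimum defining $y_{p_i}$ is achieved uniquely at $c = e_i$, giving $y_{p_i} = x_i$, so $p(y) = x$ and hence, by Proposition~\ref{p:circleoflightsinverse} applied once we know which face $y$ is in, $q$ behaves linearly as expected.

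Next I would show the atom set of $N$ is exactly $A$. That each $p_i + H$ is a nonzero element of $N$ follows because $x_i > 0$ on the interior (the inequalities $x_i \ge 0$ are facets that are strict in the interior), so $y_{p_i} > 0$ and $p_i \notin H$; this also forces $H = \{0\}$ since if some nonzero $h \in H$ then $y_h = 0$, but writing $h \equiv c_1 p_1 + \cdots + c_k p_k$ with some $c_j > 0$ gives $y_h \ge y_{p_j} = x_j > 0$, a contradiction. To see each $p_i + H = p_i$ is an atom: if $p_i = (p_a + H) \oplus (p_b + H)$ in $N$ with both summands non-identity, then $y_{p_i} = y_{p_a} + y_{p_b} = x_a + x_b$, but $p_a + p_b \equiv p_i \bmod m$ exhibits $c = e_a + e_b \ne e_i$ with $c \cdot x = y_{p_i}$, contradicting the uniqueness of the minimizer established above. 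Conversely every atom of $N$ has a representative in $A$ by the general theory (or: the circle of lights shows every class is reached from $A$), so the atom set is precisely $\{p_1 + H, \ldots, p_k + H\} = A$. Finally, $N$ is the Kunz nilsemigroup of $F \in \mathcal F(m;A)$ by Definition~\ref{d:kunzfan}, completing the argument.

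The main obstacle is the uniqueness claim: that for $x \in C(m;A)^\circ$ and $c \ne e_i$ with $c_1 p_1 + \cdots + c_k p_k \equiv p_i \bmod m$, one has $x_i < c \cdot x$ strictly. The inequality $x_i \le c \cdot x$ is given, so the content is ruling out that it holds with equality throughout $C(m;A)$ — i.e., that the hyperplane $x_i = c\cdot x$ is not one of the "defining equalities" of the cone's span. I expect this to follow from the fact that $C(m;A)$ is full-dimensional in $\RR^k_{\ge 0}$ (which itself needs a short argument: one exhibits an interior point, for instance coming from an actual numerical semigroup with atoms $A$, or from a sufficiently generic $x$), so that the only linear functional vanishing on $C(m;A)$ is zero, while $x_i - c\cdot x = (1 - c_i)x_i - \sum_{j \ne i} c_j x_j$ is a nonzero functional whenever $c \ne e_i$ (here one uses that the coordinates $x_1,\ldots,x_k$ are linearly independent as functionals). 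I would therefore first establish full-dimensionality of $C(m;A)$ — likely by producing an explicit Ap\'ery point projecting into its interior using $\gcd(A,m)=1$ — and then the uniqueness of minimizers, and hence the lemma, falls out.
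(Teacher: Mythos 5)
Your argument follows essentially the same line as the paper's: positivity of the coordinates on $C(m;A)^\circ$ forces the Kunz subgroup to be trivial, the definition of $q$ ensures every non-nil element of $N$ factors over $A$, and the interior hypothesis rules out any nontrivial factorization of a $p_i$; you are, if anything, more careful than the paper at the one delicate spot, namely that an equality $x_i = z \cdot x$ with $z \ne e_i$ can only contradict interiority once one knows $C(m;A)$ is full-dimensional (so that the functional $e_i - z$ does not vanish identically on its span), a fact the paper leaves implicit. To close the gap you flag, it suffices to check that the all-ones vector lies in $C(m;A)^\circ$: for any $c \ne e_i$ with $c_1 p_1 + \cdots + c_k p_k \equiv p_i \bmod m$, one has $\sum_j c_j \ge 2$ (if $\sum_j c_j = 1$ then $c = e_j$ for some $j \ne i$, forcing $p_j \equiv p_i \bmod m$, impossible since $A$ consists of distinct nonzero residues), so every defining inequality is strict at the all-ones vector; this is lighter than the Ap\'ery-point construction you propose and also underwrites the paper's unstated assertion that each $x_i > 0$ on the interior.
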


\begin{proof}
Since $C(m;A) \subset \RR_{\ge 0}^k$, each $x_i$ is positive, so $x$ has all positive coordinates as well.  This ensures $F$ has trivial Kunz subgroup.  Moreover, the definition of $q$ ensures $N$ has atom set contained in $A$, but if some $p_i \in A$ were not an atom of $N$, then for any $z \in \mathsf Z_N(p_i)$, we would have 
\begin{align*}
x_i
\ge y_{p_i}
&= z_1 y_{p_1} + \cdots + z_{i-1} y_{p_{i-1}} + z_{i+1} y_{p_{i+1}} + \cdots + z_k y_{p_k} \\
&= z_1 x_1 + \cdots + z_{i-1} x_{i-1} + z_{i+1} x_{i+1} + \cdots + z_k x_k,
\end{align*}
violating the assumption that $x$ lies in the interior of $C(m;A)$.  
\end{proof}

\begin{thm}\label{t:purefan}
The cones in $\mathcal G(m;A)$ form a polyhedral fan that is pure, and the union of the cones in $\mathcal G(m;A)$ equals $C(m;A)$.  
\end{thm}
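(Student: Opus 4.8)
The statement has three assertions: that the cones $p(F)$, $F\in\mathcal F(m;A)$, form a polyhedral fan, that this fan is pure, and that its support is $C(m;A)$. The plan is to prove the set equality $\bigcup_{F\in\mathcal F(m;A)}p(F)=C(m;A)$ first, since it is the technical core; the fan axioms and purity then follow fairly softly, the key structural input being that (by Proposition~\ref{p:circleoflightsinverse}) $p$ restricts to a linear isomorphism $F\to p(F)$ on each $F\in\mathcal F(m;A)$, with inverse $q|_{p(F)}$, carrying faces to faces and relative interiors to relative interiors.

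\textbf{The set equality.} For ``$\subseteq$'', fix $F\in\mathcal F(m;A)$ and $y\in F$, and set $x=p(y)$, so $q(x)=y$ by Proposition~\ref{p:circleoflightsinverse}. Nonnegativity of $x$ is inherited from $\cC_m\subseteq\RR_{\ge 0}^{m-1}$, and membership of $x$ in $C(m;A)$ reduces to the inequality $c_1y_{p_1}+\cdots+c_ky_{p_k}\ge y_{p_i}$ whenever $c_1p_1+\cdots+c_kp_k\equiv p_i\pmod m$; this follows by induction on the coordinate sum of $c$ from the defining inequalities $y_a+y_b\ge y_{a+b}$ of $\cC_m$, adopting the convention $y_0=0$ (consistent because all coordinates of $y$ are nonnegative, so $y_a+y_b\ge 0=y_0$ whenever $a+b\equiv 0$). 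For ``$\supseteq$'', fix $x\in C(m;A)$ and set $y=q(x)$. Taking $c=e_i$ in the definition of $q$ gives $y_{p_i}\le x_i$, and the defining inequalities of $C(m;A)$ give $x_i\le y_{p_i}$, so $p(y)=x$; since $q(\RR_{\ge 0}^k)\subseteq\cC_m$, the point $y$ lies in the relative interior of a unique face $F$ of $\cC_m$. The crux is to show $F\in\mathcal F(m;A)$, i.e.\ that every atom of the Kunz nilsemigroup $N$ of $F$ has a representative in $A$; I would prove the stronger claim that every nonzero $\overline i\in N$ is an $\oplus$-sum of $\overline{p_1},\dots,\overline{p_k}$, by induction on $y_i$. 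Given $\overline i\neq 0$ (so $i\notin H$), pick a minimizing $c$ with $c_1p_1+\cdots+c_kp_k\equiv i$ and $c\cdot x=y_i$; some $j$ with $c_j>0$ has $p_j\notin H$ (otherwise $i\in H$), and then $c\cdot x=(c-e_j)\cdot x+y_{p_j}\ge y_{i-p_j}+y_{p_j}\ge y_i$ forces equality throughout, so the $\cC_m$-facet $x_{i-p_j}+x_{p_j}=x_i$ is tight at the relative-interior point $y$ of $F$, hence on all of $F$, whence $\overline{i-p_j}\oplus\overline{p_j}=\overline i$ in $N$ by Theorem~\ref{t:kunznilsemigroupface}; since $y_{p_j}=x_j>0$ we have $y_{i-p_j}<y_i$ and induction applies to $\overline{i-p_j}$ (the recursion terminates when $i\equiv p_j$, giving $\overline i=\overline{p_j}$). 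Thus the atoms of $N$ lie among $\overline{p_1},\dots,\overline{p_k}$, so $F\in\mathcal F(m;A)$ and $x=p(y)\in p(F)$.

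\textbf{Fan axioms and purity.} For fan axiom (i), given $F'\subseteq F$ a face of $F\in\mathcal F(m;A)$: $F'$ is a face of $\cC_m$, and for $x'$ in the relative interior of $p(F')$ the set equality gives $x'\in C(m;A)$, while $q(x')=y'$ is simultaneously a relative-interior point of $F'$ (because $q(p(y'))=y'$) and of a face in $\mathcal F(m;A)$ (by the ``$\supseteq$'' argument); these faces coincide, so $F'\in\mathcal F(m;A)$ and $p(F')\in\mathcal G(m;A)$. For axiom (ii), given $F_1,F_2\in\mathcal F(m;A)$, the face $F_1\cap F_2$ of $\cC_m$ is a face of $F_1$, hence in $\mathcal F(m;A)$ by (i); and $p(F_1)\cap p(F_2)=p(F_1\cap F_2)$ since any $x=p(y_1)=p(y_2)$ with $y_i\in F_i$ satisfies $y_1=q(x)=y_2$, so the intersection is a common face lying in $\mathcal G(m;A)$. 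Finiteness is clear from finiteness of the face lattice of $\cC_m$. For purity, I would first check $C(m;A)$ is $k$-dimensional (e.g.\ $(1,\dots,1)\in C(m;A)^\circ$, using that $c_1p_1+\cdots+c_kp_k\equiv p_i$ forces $c\ne e_l$ for $l\ne i$ to have coordinate sum $\ge 2$). From $q\circ p=\mathrm{id}_F$ and $p\circ q|_{C(m;A)}=\mathrm{id}$ one gets $p(F)^\circ=\{x\in C(m;A):q(x)\in F^\circ\}$, so the set of $x\in C(m;A)$ with $q(x)$ in a face of dimension $<k$ is the finite union $\bigcup_{F\in\mathcal F(m;A),\ \dim F<k}p(F)^\circ$, which is nowhere dense in $\RR^k$; hence a dense open subset of the nonempty open set $C(m;A)^\circ$ consists of $x$ with $q(x)$ in the relative interior of a $k$-dimensional face, giving the existence of $k$-dimensional cones. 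Finally, for arbitrary $F\in\mathcal F(m;A)$, approximate a point of $p(F)^\circ$ by a sequence in that dense open set, pass to a subsequence lying in the relative interior of a single $k$-dimensional cone $p(F^{\ast})$; closedness puts the limit in $p(F^{\ast})$, and since $p(F)\cap p(F^{\ast})$ is a face of $p(F)$ containing a relative-interior point of $p(F)$ it equals $p(F)$, so $p(F)$ is a face of the $k$-dimensional cone $p(F^{\ast})$. Thus $p(F)$ is maximal only if $\dim p(F)=k$, and $\mathcal G(m;A)$ is pure with chambers of dimension $k$.

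\textbf{Main obstacle.} The substantive step is the inductive argument in the ``$\supseteq$'' direction of the set equality — showing the face of $\cC_m$ containing $q(x)$ belongs to $\mathcal F(m;A)$ — which requires carefully tracking how the minimizing factorizations defining $q$ decompose elements of the Kunz nilsemigroup, combined with the principle that a facet inequality of $\cC_m$ tight at a relative-interior point of a face is tight on the whole face. Everything downstream (the fan axioms, full-dimensionality of $C(m;A)$, and the topological argument for purity) is routine once that equality and Proposition~\ref{p:circleoflightsinverse} are available.
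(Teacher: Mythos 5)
Your proposal is correct and rests on the same structural pillar as the paper's proof, namely that Proposition~\ref{p:circleoflightsinverse} makes $p$ and $q$ mutually inverse linear isomorphisms on each face in $\mathcal F(m;A)$; but the organization and one of the key steps are genuinely different. The paper first verifies the fan axioms, dismissing axiom (i) --- that $F'\subseteq F\in\mathcal F(m;A)$ implies $F'\in\mathcal F(m;A)$ --- as ``Clearly''; then it proves $\bigcup_{F}p(F)=C(m;A)$ by establishing only the interior containment $C(m;A)^\circ\subseteq\bigcup p(F)$ via Lemma~\ref{l:interioratomsets} (which says that for $x\in C(m;A)^\circ$ the face through $q(x)$ has trivial Kunz subgroup and atom set exactly $A$) and then appealing to the fact that rational cones are closed. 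You instead prove the full support equality \emph{first}, on the closed cone $C(m;A)$ rather than just its interior, by an explicit induction on $y_i$ that tracks minimizing exponent vectors and uses the principle ``a $\cC_m$-facet inequality tight at a relative-interior point of a face is tight on the whole face.'' This stronger, boundary-inclusive version is essentially a hand-rolled combination of the paper's Lemma~\ref{l:interioratomsets} and Proposition~\ref{p:circleoflightsinverse}, and it lets you \emph{derive} fan axiom (i) rather than assert it (in your scheme, a relative-interior point of $p(F')$ may well lie on $\partial C(m;A)$, which is exactly why you need the boundary case). Your purity argument is the same dimension-count as the paper's --- lower-dimensional cones are nowhere dense in the $k$-dimensional support --- just spelled out more topologically with sequences and subsequences. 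Two small things worth tightening: in your ``$\subseteq$'' induction the stated hypothesis $c\cdot x\ge y_{p_i}$ must be strengthened to $c\cdot x\ge y_a$ for arbitrary $a\equiv c\cdot p$ in order for the inductive step (which peels off one $e_j$ and lands at $y_{a-p_j}$, a coordinate generally not indexed by $A$) to go through; and your remark ``$y_{p_j}=x_j>0$'' should be justified by $p_j\notin H$ together with $q(x)\in F^\circ$ rather than by positivity of $x_j$ a priori, since you allow $x\in\partial C(m;A)$.
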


\begin{proof}
Cleary, if a face $F \subseteq \cC_m$ lies in $\mathcal F(m;A)$, then all faces of $F$ must as well, so every face of a cone in $\mathcal G(m;A)$ is a cone in $\mathcal G(m;A)$.  Moreover, for any $F, F' \in \mathcal F(m;A)$, Proposition~\ref{p:circleoflightsinverse} implies $p(F) \cap p(F') = p(F \cap F')$, so the intersection of any two cones in $\mathcal G(m;A)$ is a cone in $\mathcal G(m;A)$.  This verifies $\mathcal G(m;A)$ is a polyhedral fan.  

Next, by Lemma~\ref{l:interioratomsets}, for each $x \in C(m;A)^\circ$, $y = q(x)$ lies in a face $F \in \mathcal F(m;A)$.   
As such, $p(y) = x$ by Proposition~\ref{p:circleoflightsinverse}, which lies in $p(F) \in \mathcal G(m;A)$.  This means the union of the cones in $\mathcal G(m;A)$ contains $C(m;A)^\circ$, and since rational polyhedral cones are topologically closed, the union must equal $C(m;A)$.  

The final claim to prove is that every maximal cone in $\mathcal G(m;A)$ has dimension $k$.  Indeed, since $\mathcal G(m;A)$ contains only finitely many cones, and their union equals the $k$-dimensional cone $C(m;A)$, the union of the $k$-dimensional cones in $\mathcal G(m;A)$ must also equal $C(m;A)$.  This means every cone in $\mathcal G(m;A)$ is contained in a $k$-dimenisional cone in $\mathcal G(m;A)$.  
\end{proof}

\begin{cor}\label{c:chamberstaircase}
A face $F \in \mathcal F(m;A)$ is a chamber if and only if its Kunz nilsemigroup is staircase.   
\end{cor}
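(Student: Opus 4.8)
The plan is to characterize chambers of $\mathcal G(m;A)$, equivalently (since $p$ is injective on each $F \in \mathcal F(m;A)$ by Proposition~\ref{p:circleoflightsinverse}) the faces $F \in \mathcal F(m;A)$ that are maximal under inclusion among faces in $\mathcal F(m;A)$, since Theorem~\ref{t:purefan} tells us these are exactly the $k$-dimensional cones. I would work entirely on the $C(m;A)$ side: by Proposition~\ref{p:circleoflightsinverse} the cone $p(F)$ has dimension equal to $\dim F$, and by Lemma~\ref{l:interioratomsets} together with the proof of Theorem~\ref{t:purefan}, every point $x \in C(m;A)^\circ$ lands (via $q$) in some face $F \in \mathcal F(m;A)$ whose Kunz nilsemigroup has atom set exactly $A$; so chambers are precisely the $p(F)$ that have nonempty interior inside $C(m;A)$, i.e.\ that meet $C(m;A)^\circ$.

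First I would prove the ``only if'' direction: if $F$ is a chamber, then $p(F)$ is $k$-dimensional, so $p(F)^\circ$ is open in $\RR^k$ and hence meets $C(m;A)^\circ$; pick $x$ in that intersection, so $y=q(x)$ lies in $F$ and, by Lemma~\ref{l:interioratomsets}, the Kunz nilsemigroup $N$ of $F$ has atom set $A$ and trivial Kunz subgroup. The key point is then that for a point $x$ in the interior of $C(m;A)$ the defining inequalities of $C(m;A)$ are all strict, which by the description of $q$ forces every non-nil element of $N$ to have a \emph{unique} factorization into atoms: if $i \in \ZZ_m$ (non-nil in $N$) had two distinct factorizations $z, z' \in \mathsf Z_N(i)$, then by Proposition~\ref{p:circleoflightsinverse} both $z\cdot x$ and $z'\cdot x$ equal $y_i$, and a standard cancellation argument (subtract the common part, then one side exhibits some $p_j$ as a shorter nonnegative combination of the others) contradicts strictness of one of the inequalities cut out by $C(m;A)$. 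Hence $N$ is staircase.

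For the ``if'' direction, suppose the Kunz nilsemigroup $N$ of $F \in \mathcal F(m;A)$ is staircase; I must show $\dim p(F) = k$, equivalently that $p(F)$ contains an interior point of $C(m;A)$. Since $N$ is staircase, every non-nil $i$ has a unique factorization $z^{(i)} \in \mathsf Z_N(i)$, and for $y \in F$ Proposition~\ref{p:circleoflightsinverse} gives $y_i = z^{(i)}\cdot p(y)$; the staircase (unique-expression) condition is exactly what makes all the ``competing'' inequalities $x_{p_i} \le c\cdot x$ in the definition of $C(m;A)$ strict on the relative interior of $p(F)$ — any such $c$ would give a second, distinct expression of $p_i$ in $N$, impossible. (Here I would also need that $A$ is the atom set of $N$, not just that each atom has a representative in $A$; a facet of $\mathcal G(m;A)$ on the boundary, as in Example~\ref{e:2dfan}, has a redundant element of $A$ and fails to be full-dimensional precisely because some $x_{p_i} \le c\cdot x$ holds with equality. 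So part of the argument is: a staircase $N$ with atom set strictly smaller than $A$ would violate one of these inequalities with equality, putting $p(F)$ in the boundary of $C(m;A)$; but then $F$ is not maximal in $\mathcal F(m;A)$, contradiction — I'd need to check that such an $F$ is genuinely contained in a larger face of $\mathcal F(m;A)$, which should follow by perturbing off the relevant hyperplane.) Having all the $C(m;A)$-inequalities strict on $p(F)^\circ$ means $p(F)^\circ \subseteq C(m;A)^\circ$; since $p(F)^\circ \ne \emptyset$, $p(F)$ meets $C(m;A)^\circ$, so $p(F)$ is a chamber.

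The main obstacle I expect is the bookkeeping around the redundancy in $A$: disentangling ``$N$ staircase'' from ``$N$ staircase \emph{and} $A$ = atoms of $N$,'' and verifying that a staircase face with a superfluous element of $A$ is never maximal in $\mathcal F(m;A)$. The cleanest route is probably to phrase everything in terms of which of the (finitely many) inequalities defining $C(m;A)$ are tight at a given point $y \in F$: tightness of the inequality indexed by a factorization $c$ of $p_i$ is equivalent to $c$ being a factorization of $p_i$ in $N$ distinct from $e_i$, and $F$ is a chamber iff some point of $p(F)$ makes none of them tight, which happens iff $N$ is staircase with atom set $A$. Once that dictionary is set up, both directions are short; establishing the dictionary carefully, using Proposition~\ref{p:circleoflightsinverse} and Lemma~\ref{l:interioratomsets}, is the real content.
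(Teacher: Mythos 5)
Your proposal takes a genuinely different route from the paper: the paper's proof is a one-liner combining Theorem~\ref{t:purefan} (chambers are $k$-dimensional) with Lemma~\ref{l:interioratomsets} (chambers have atom set $A$) and then citing \cite[Theorem~4.3]{kunzfaces2} for the equivalence between ``$\dim F = $ number of atoms'' and ``$N$ staircase.'' You attempt to reprove that equivalence from scratch, which is a reasonable ambition, but the argument as written has a gap that is used in both directions.

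The gap is in your dictionary ``$F$ is a chamber iff $p(F)$ meets $C(m;A)^\circ$.'' In a pure $k$-dimensional fan whose support is $C(m;A)$, lower-dimensional faces (walls separating two chambers) also have their relative interiors inside $C(m;A)^\circ$; meeting the interior of the support is strictly weaker than having dimension $k$. Equivalently, having all the defining inequalities of $C(m;A)$ strict at some $x \in p(F)$ only tells you (via Lemma~\ref{l:interioratomsets}) that the Kunz subgroup is trivial and the atom set is exactly $A$; it does not rule out inner trades. This poisons the ``if'' direction outright: ``$p(F)^\circ \subseteq C(m;A)^\circ$, hence $p(F)$ is a chamber'' is a non sequitur. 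It also undermines the mechanism you propose for ``only if'': two distinct factorizations $z, z' \in \mathsf Z_N(i)$ of a non-atom $i$ impose, by Proposition~\ref{p:circleoflightsinverse}, the linear \emph{equality} $(z - z')\cdot x = 0$ on all of $p(F)$ — that is what drops the dimension below $k$ — and this is not an instance of any $C(m;A)$-inequality (those only bound $x_{p_i}$ for atoms $p_i$). Your ``cancel the common part and exhibit some $p_j$ as a combination of the others'' step does not in general produce such a $p_j$: after cancellation you get $u, u'$ with disjoint supports and $\overline u = \overline{u'}$, but neither need equal some $e_j$ (e.g.\ $u = e_1 + e_2$, $u' = 2e_3$). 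The fix is to abandon the $C(m;A)$-strictness route and argue directly with dimension: $p(F) = F_N \subseteq H_N$, and $H_N = \RR^k$ iff no non-nil element has two factorizations, i.e.\ iff $N$ is staircase; combine this with the purity from Theorem~\ref{t:purefan}. That is, in effect, what \cite[Theorem~4.3]{kunzfaces2} packages, and why the paper simply cites it.
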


\begin{proof}
A maximal face of $\mathcal F(m;A)$ has dimension $k$ by Theorem~\ref{t:purefan}, and since the number of atoms of its Kunz nilsemigroup $N$ is also $k$, \cite[Theorem~4.3]{kunzfaces3} implies $N$~has no inner trades and therefore must be staircase.  
\end{proof}

\section{Outer Betti elements and facets}
\label{sec:facetouterbetti}

Throughout this section, let $A = \{p_1,\ldots,p_k\} \subset \ZZ_m \setminus \{0\}$ with $\gcd(A,m) = 1$, and let $N$ be a partly cancellative nilsemigroup with atom set $A$.

The main results of this section are Theorem~\ref{t:outerbettihdesc}, which identifies an $H$-description of each face $F \subseteq \cC_m$ in terms of its corresponding Kunz nilsemigroup $N$, and Theorem~\ref{t:kunzcondition}, which characterizes the finite, partly cancellative nilsemigroups that are Kunz.  

\begin{defn}\label{d:modularnilsemigroup}
A \emph{modular}%
\footnote{no relation to modular lattices}
nilsemigroup is a finite, partly cancellative nilsemigroup~$N$ together with a bijection $f : \ZZ_m \to N \setminus \{\infty\}$ such that for any $a,b\in\ZZ_m$, either $f(a) + f(b) = f(a+b)$ or $f(a) + f(b) = \infty$.  For convenience, we write $a \in \ZZ_m$ in place of $f(a) \in N$, effectively viewing $N = \ZZ_m \cup \{\infty\}$ as sets.  If $N = \<f(p_1), \ldots, f(p_k)\>$ and~$z \in \ZZ_{\ge 0}^k$, we write $\overline z = f(z_1p_1 + \cdots + z_kp_k)$, so if $z \in \mathsf Z_N(i)$ for $i \ne \infty$, then $\overline z = i$.  

A \emph{Betti equality} of a modular nilsemigroup $N$ is an equation of the form 
\[
c \cdot x = c_1x_1 + \cdots + c_kx_k = c_1'x_1+ \cdots c_k'x_k = c'\cdot x  
\]
where $c,c' \in \mathsf{Z}_N(p)$ for some $p \in N \setminus \{\infty\}$.  Let $H_N$ equal to subspace of $\RR^k$ satisfying all such equalities for a given $N$ (this coincides with the nullspace of the presentation matrix defined in \cite[Section~4]{kunzfaces3}).  
A \emph{Betti inequality} of $N$ is an inequality of the form 
\[
z \cdot x = z_1x_1 + \cdots + z_kx_k \geq a_1x_1 + \cdots + a_kx_k = a \cdot x  
\]
where $z \in \mathsf{Z}_N(\infty)$ is an outer Betti factorization and $a \in \mathsf{Z}_N(\overline{z})$.  Let $F_N \subseteq H_N$ be the rational polyhedral cone containing points $x \in H_N$ that satisfy all Betti inequalities.  
\end{defn}

\begin{example}\label{e:bettiequality}
Consider the Kunz nilsemigroup $N$ depicted on the left in Figure~\ref{f:nonkunzex}.  The trades $e_1 + e_3 \sim 2e_2$ and $e_7 + 3_8 \sim 2e_3$, occurring at $4, 6 \in N$ respectively, yield 
\[
(1,-2,1,0,0) \cdot x = 0
\qquad \text{and} \qquad
(0,0,-2,1,1) \cdot x = 0
\]
as Betti equalities, so $H_N \subseteq \RR^5$ has $\dim H_N = 3$.  Each factorization $b$ from among 
$$\begin{array}{lllll}
(1, 0, 0, 0, 1),
&
(0, 1, 0, 1, 0),
&
(-1, 1, 0, 0, 1),
&
(-1, 0, 1, 1, 0),
&
(2, -1, 0, 0, 0),
\\
(0, -1, 1, 0, 1),
&
(1, 1, -1, 0, 0),
&
(0, -1, -1, 2, 0),
&
(0, 0, 0, -1, 2),
&
(1, 0, 0, 1, -1)
\end{array}$$
yields a Betti inequality $b \cdot x \ge 0$, together defining a 3-dimensional cone $F_N \subseteq H_N$.  
\end{example}

\begin{example}\label{e:2dfan2}
Resume notation from Example~\ref{e:2dfan}.  
The interior of chamber~(b) is defined by the inequalities $7x_6 > 2x_{11}$ and $4x_{11} > 4x_6$, which can be seen as consequences of the outer Betti elements $\{(7,0)\}$ and $\{(0,4)\}$, respectively.  Note that $N$ does have a third outer Betti element, namely $\{(3,2)\}$, but the inequality $3x_6 + 2x_{11} \ge 0$ does not define a facet of chamber~(b).  
\end{example}

\begin{example}\label{e:nonkunznilsemigroup}
The modular nilsemigroup $N$ depicted in the middle in Figure~\ref{f:nonkunzex} is not Kunz.  Indeed, $N$ has no inner Betti elements, so if it were Kunz, $F_N$ would be a full dimensional cone inside $H_N = \RR^4$ by Theorem~\ref{t:outerbettihdesc}.  For a point $x \in F_N$, the inequalities 
\[
(-1,-1,1,1)\cdot x \ge 0,
\qquad
(1,-1,-1,1) \cdot x \ge 0,
\qquad \text{and} \qquad
(0,2,0,-2) \cdot x \ge 0,
\]
together imply $(0,2,0,-2) \cdot x = 0$.  In other words, every point $x \in F_N$ lies in the hyperplane $x_2 = x_4$, i.e. $F_N$ is not full dimensional in $H_N$. 
\end{example}

\begin{figure}[t!]
\begin{center}
\includegraphics[height=0.9in]{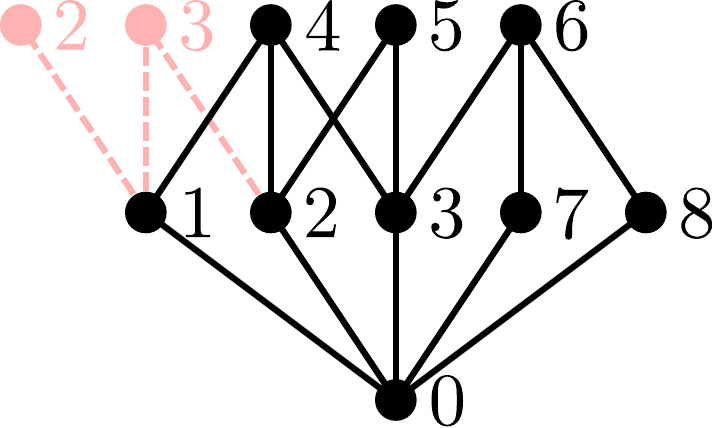}
\hspace{3em}
\includegraphics[height=0.9in]{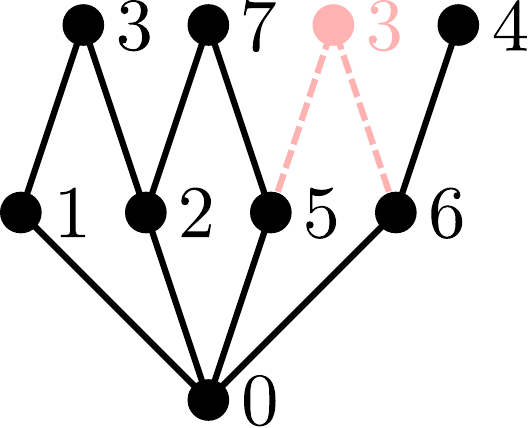}
\hspace{3em}
\includegraphics[height=0.9in]{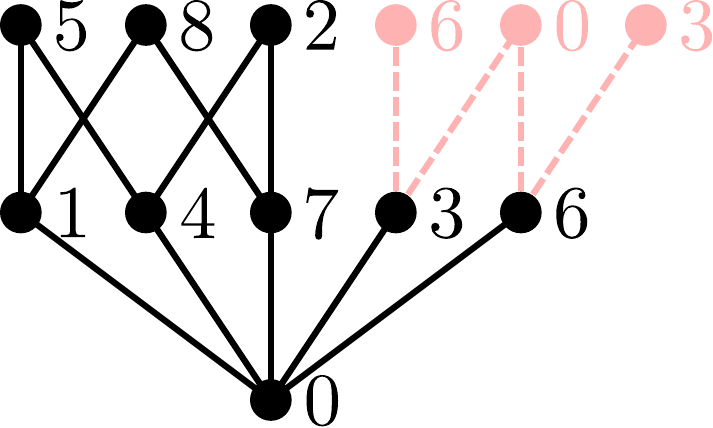}
\end{center}
\caption{Kunz posets from Examples~\ref{e:bettiequality},~\ref{e:nonkunznilsemigroup}, and~\ref{e:innerfactsminimal}}
\label{f:nonkunzex}
\end{figure}



In what follows, let
$$Z = \{z \in \mathsf{Z}_N(\infty) \mid z - e_i \notin \mathsf{Z}_N(\infty) \text{ for each } i \in \supp(z)\}$$
denote the set of minimal elements of $\mathsf Z_N(\infty)$ under the component-wise partial order.  

\begin{lemma}\label{l:locateouterbetti}
Suppose $N$ is a modular nilsemigroup and fix $x \in F_N^\circ$ and $y \in \mathsf{Z}_N(\infty)$.  There exist $b, c \in \ZZ_{\geq 0}^k$ such that $b$ is a factorization of an outer Betti element of $N$,
\[
y \cdot x = (b+c) \cdot x, \quad \text{and} \quad \overline{y} = \overline{b+c}.
\]
\end{lemma}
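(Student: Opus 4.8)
The plan is to reduce the statement to the case where $y$ is componentwise minimal in $\mathsf Z_N(\infty)$, and then to induct on the value $y\cdot x$, at each stage either exhibiting an outer Betti element that contains $y$ or producing a strictly ``shorter'' factorization of $\infty$ to which the inductive hypothesis applies. For the reduction, note that since $\infty$ is nil, $\mathsf Z_N(\infty)$ is closed upward in $\ZZ_{\ge 0}^k$, so by Dickson's lemma its set $Z$ of componentwise-minimal elements is finite and $\mathsf Z_N(\infty)=Z+\ZZ_{\ge 0}^k$. Writing $y=z+w$ with $z\in Z$ and $w\in\ZZ_{\ge 0}^k$: if the conclusion holds for $z$, with outer Betti factorization $b$ and some $c'\in\ZZ_{\ge 0}^k$, then $c:=c'+w$ works for $y$, since $(b+c)\cdot x=(b+c')\cdot x+w\cdot x=z\cdot x+w\cdot x=y\cdot x$ and, comparing residues modulo $m$, $\sum_i(b+c)_ip_i\equiv\sum_iz_ip_i+\sum_iw_ip_i=\sum_iy_ip_i$, whence $\overline{b+c}=\overline y$. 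So we may assume $y=z\in Z$; since $x$ has positive coordinates, $z\cdot x$ ranges over finitely many values as $z$ runs over $Z$, and we induct on this value.

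For the core case I would form the ``saturation'' of $z$: let $B\subseteq\mathsf Z_N(\infty)$ be the smallest subset containing $z$ with the property that whenever $b\in B$, $i\in\supp(b)$, and $\varphi_N(b-e_i)\ne\infty$, the entire set $\mathsf Z_N(\varphi_N(b-e_i))+e_i$ lies in $B$. Since $N$ is finite and, by partial cancellation, $\mathsf Z_N(g)$ is a finite antichain for every non-nil $g$, this process terminates with $B$ finite. Tracking the construction, and using that two factorizations $a,a'$ of a common non-nil element obey the Betti equality $a\cdot x=a'\cdot x$ (valid since $x\in H_N$), one checks that every $b\in B$ satisfies $b\cdot x=z\cdot x$ and $\overline b=\overline z$, and that $\nabla_B$ is connected because each chunk $\mathsf Z_N(\varphi_N(b-e_i))+e_i$ adjoined to $B$ shares the coordinate $i$ with the element $b$ that produced it.

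Now one of two things happens. If some $b\in B$ fails to be componentwise minimal, say $b-e_j\in\mathsf Z_N(\infty)$ with $j\in\supp(b)$, then $(b-e_j)\cdot x=z\cdot x-x_j<z\cdot x$; writing $b-e_j=z''+w''$ with $z''\in Z$ and $w''\in\ZZ_{\ge 0}^k$ gives $z''\cdot x\le(b-e_j)\cdot x<z\cdot x$, so the inductive hypothesis supplies an outer Betti factorization $b^\star$ and a $c^\star$ with $(b^\star+c^\star)\cdot x=z''\cdot x$ and $\overline{b^\star+c^\star}=\overline{z''}$, and then $c:=c^\star+w''+e_j$ gives $(b^\star+c)\cdot x=z\cdot x$ and $\overline{b^\star+c}=\overline b=\overline z$ by the same residue and $x$-value bookkeeping. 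Otherwise every $b\in B$ is componentwise minimal, so for each $j\in\supp(B)$ and each $b\in B$ with $j\in\supp(b)$, the element $\varphi_N(b-e_j)$ is non-nil with residue $(\sum_iz_ip_i)-p_j\pmod m$ (using $\overline b=\overline z$); since $N$ is modular there is a \emph{unique} non-nil element with that residue, so $\varphi_N(b-e_j)$ does not depend on $b$, and the closure property of $B$ then forces $B-e_j=\mathsf Z_N(\varphi_N(b-e_j))$ for every $j\in\supp(B)$. Hence $B$ is an outer Betti element of $N$ containing $z$, and taking $b=z$, $c=0$ finishes the core case.

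The step I expect to be the main obstacle is establishing, in the second of these two cases, that the saturated set $B$ genuinely satisfies both axioms of an outer Betti element. Connectedness (axiom (ii)) is built into the construction, but axiom (i) --- that $B-e_j$ is the \emph{complete} factorization set of a \emph{single} non-nil element --- is where modularity of $N$ must be used essentially: the key point is that a non-nil element of $N$ is pinned down by its residue modulo $m$, which is exactly what forces $\varphi_N(b-e_j)$ to be constant over $b\in B$ in the no-leak case. A secondary point to verify is that the base case of the induction, namely $z$ with $z\cdot x$ minimal over $Z$, necessarily falls into this second case: a ``leak'' $b-e_j\in\mathsf Z_N(\infty)$ would produce some $z''\in Z$ with $z''\cdot x<z\cdot x$, contradicting minimality. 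Beyond these, only the routine residue/$x$-value bookkeeping in the first case and the termination/finiteness of the saturation remain to be spelled out.
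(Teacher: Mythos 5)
Your proof is correct and follows essentially the same strategy as the paper's: reduce to a componentwise-minimal $z\in Z$, then iterate, strictly decreasing $z\cdot x$ until an outer Betti factorization is hit, with termination guaranteed by positivity of the coordinates of $x$ and finiteness of $Z$. Where the paper tersely asserts that if $z$ lies in no outer Betti element then some $y'\in\mathsf Z_N(\infty)\setminus Z$ exists with the same $x$-value and residue as $z$, your saturation construction supplies the missing justification: you show the saturated set $B$ either leaks (producing the needed $y'\notin Z$) or, if it stays within $Z$, is itself an outer Betti element containing $z$ — with the modularity hypothesis used exactly where it must be, namely to conclude that $\varphi_N(b-e_j)$ is independent of $b\in B$, so that $B-e_j$ is the full factorization set of a single non-nil element. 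This is a faithful and somewhat more rigorous rendering of the paper's argument rather than a different route.
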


\begin{proof}
Choose $z \in Z$ so that $y = z + \ell$ for some $\ell \in \ZZ_{\geq 0}^k$.  If $z$ lies in an outer Betti element of $N$, then choosing $b = z$ and $c = \ell$ completes the proof.  Otherwise, there exists $y' \in \mathsf Z_N(\infty) \setminus Z$ such that $y' \sim z$.  Choose $z' \in Z$ so that $y' = z' + \ell'$ for some $\ell' \in \ZZ_{\geq 0}^k \setminus \{0\}$.  If $z'$ is a factorization of an outer Betti element of $N$, then choosing $b = z'$ and $c = \ell' + \ell$ completes the proof, since $y' \sim z$ ensures
$$
(b + c) \cdot x = (y' + \ell) \cdot x = (z + \ell) \cdot x = y \cdot x
\qquad \text{and} \qquad
\overline{b + c} = \overline{y' + \ell} = \overline{z + \ell} = \overline{y}.
$$
Otherwise, we may again repeat this process to obtain $z'' \in Z$.  Notice that 
\begin{align*}
(z \cdot x) - (z' \cdot x)
&= (y' \cdot x) - (z' \cdot x)
= \ell' \cdot x
\ge \min \{x_1, \ldots, x_k\},
\end{align*}
so this process must eventually terminate in a suitable choice of $b$ and $c$.  
\end{proof}

\begin{prop}\label{p:innerfactsminimal}
Suppose $N$ is modular, fix $p \in \ZZ_m \setminus \{\infty\}$, and fix $x \in F_N$ that satisfies all Betti inequalities stricly.  If $z \in \mathsf Z_N(p)$ and $z' \in \ZZ_{\ge 0}^k$ with $\overline{z'} = p$, then
\[
z'\cdot x \geq z \cdot x,
\]
with equality if and only if $z' \in \mathsf{Z}_{N}(p)$.  
\end{prop}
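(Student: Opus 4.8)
The plan is to reduce everything to a statement about the element $\overline{z'} = p$ and its factorizations, then induct on the ``distance'' from $z'$ to the set $\mathsf Z_N(p)$. Since $N$ is modular and $\overline{z'} = p \neq \infty$, we may compare $z'$ and any $z \in \mathsf Z_N(p)$ directly, but $z'$ need not itself be a factorization of $p$ in $N$ (it could be that $z'$ has ``excess'' that collapses to $\infty$ only to be resolved back to $p$). The key tool is Lemma~\ref{l:locateouterbetti}, or rather its analogue for elements other than $\infty$, combined with Proposition~\ref{p:innerfactsminimal}'s own inductive structure; but since Proposition~\ref{p:innerfactsminimal} is what we are proving, I would instead argue directly as follows.

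First, suppose $z' \in \mathsf Z_N(p)$. Then $z, z' \in \mathsf Z_N(p)$, so $z - z'$ decomposes (after clearing signs) into a sequence of Betti equalities and Betti inequalities connecting $z'$ to $z$ via the trade structure of $N$: concretely, since any two factorizations of $p$ are connected through the congruence generated by the trades, one can write $z' \cdot x - z \cdot x$ as a sum of terms each of the form $(c - c')\cdot x$ with $c, c'$ factorizations of a common non-nil element (these contribute $0$, being Betti equalities) — wait, that only handles $H_N$. The genuine content is that inner trades give equalities, so in fact $z' \cdot x = z \cdot x$ whenever both lie in $\mathsf Z_N(p)$: this is precisely the statement that $x \in H_N$, since $z - z' \in H_N^\perp$... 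I need to be careful here. Let me restructure: the ``only if'' direction, i.e. $z' \cdot x = z \cdot x$ when $z' \in \mathsf Z_N(p)$, follows because $x \in F_N \subseteq H_N$ and $z - z'$ satisfies all Betti equalities by connectedness of $\mathsf Z_N(p)$ (every two factorizations of $p$ differ by a sequence of inner trades, each of which is a Betti equality). This gives the equality case in one direction.

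For the main inequality, suppose $z' \notin \mathsf Z_N(p)$. Since $\overline{z'} = p$, there is some intermediate point: pick $i \in \supp(z')$ minimal-ish and track when collapsing occurs. The cleanest route: let $w \in \ZZ_{\ge 0}^k$ be obtained from $z'$ by repeatedly subtracting atoms until we reach a factorization $z'' $ with $\overline{z''} \ne \infty$ at every stage but $z'$ itself requires passing through $\infty$ — equivalently, consider the first ``prefix'' of $z'$ (in a chosen summation order) whose image in $N$ is $\infty$. At that stage we have an element of $\mathsf Z_N(\infty)$ that is $\le$ some initial chunk of $z'$; by Lemma~\ref{l:locateouterbetti} applied to that element (note the hypothesis there is $x \in F_N^\circ$, which is implied by ``$x$ satisfies all Betti inequalities strictly''), we obtain a factorization $b$ of an outer Betti element and $c$ with $b + c$ summing to the same $x$-value and the same image. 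Substituting $b + c$ for that chunk of $z'$ and applying the strict Betti inequality $b \cdot x > (\text{any factorization of } \overline b) \cdot x$ strictly decreases the $x$-value while keeping the total image equal to $p$; iterating (using the same termination estimate as in Lemma~\ref{l:locateouterbetti}, each step drops the value by at least $\min\{x_1,\dots,x_k\} > 0$) lands us at an honest factorization $\tilde z \in \mathsf Z_N(p)$ with $z' \cdot x > \tilde z \cdot x = z \cdot x$, the last equality by the previous paragraph.

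The main obstacle I anticipate is bookkeeping the reduction when $z' \notin \mathsf Z_N(p)$: one must make precise the notion of ``the first prefix that collapses to $\infty$'' and verify that replacing it via Lemma~\ref{l:locateouterbetti} genuinely preserves $\overline{(\cdot)} = p$ and strictly decreases $x$-value — the subtlety being that the outer Betti replacement $b \mapsto$ (factorization of $\overline b$) uses a Betti \emph{inequality}, which is strict here by hypothesis, so the value strictly drops, but one must ensure the replacement doesn't reintroduce an $\infty$-collapse elsewhere, which is handled by noting $\overline{b + c} = \overline y$ is a fixed non-nil element and the rest of $z'$ is untouched. Making the induction/termination rigorous (rather than hand-wavy "repeat this process") is the part that will require the most care, mirroring the termination argument already given in the proof of Lemma~\ref{l:locateouterbetti}.
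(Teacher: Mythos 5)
Your proof takes essentially the same route as the paper: handle the case $z' \in \mathsf Z_N(p)$ directly from the Betti equalities in the definition of $F_N$, then iterate Lemma~\ref{l:locateouterbetti} together with strict Betti inequalities, invoking the same termination argument. The one place you make things harder than necessary is the ``prefix'' bookkeeping: since $N$ is modular and $\overline{z'} = p$, the assumption $z' \notin \mathsf Z_N(p)$ already forces $z' \in \mathsf Z_N(\infty)$, so Lemma~\ref{l:locateouterbetti} applies to $z'$ itself with no intermediate truncation needed — this is exactly what the paper does, and cleaning that up would also dispose of the hand-wringing in your final paragraph.
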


\begin{proof}
If $z' \in \mathsf{Z}_N(p)$, then the definition of $F_N$ ensures $z \cdot x = z'\cdot x$, so it suffices to show that if $z' \in \mathsf{Z}_N(\infty)$ with $\overline{z'} = p$, we have $z' \cdot x > z \cdot x$.  By Lemma~\ref{l:locateouterbetti}, there is a factorization $b$ of an outer Betti element and $c \in \ZZ_{\geq 0}^k$ such that 
\[
z' \cdot x = (b + c) \cdot x
\qquad \text{and} \qquad
\overline{b+c} = p.
\]
Fixing $a \in \mathsf{Z}_N(\overline{b})$, the Betti inequality $z' \cdot x > (a+c) \cdot x$ must hold.  If $a + c \in \mathsf{Z}_N(p)$, then $(a + c) \cdot x = z \cdot x$ and we are done.  Otherwise, $(a+c) \in \mathsf{Z}_N(\infty)$, and we can again apply Lemma~\ref{l:locateouterbetti} to obtain a factorization $b'$ of an outer Betti element, $c' \in \ZZ_{\ge 0}^k$, and $a' \in \mathsf{Z}_N(\overline{b_1})$ such that 
\[
z' \cdot x > (a+c) \cdot x = (b' + c') \cdot x > (a' + c')\cdot x
\qquad \text{and} \qquad
\overline{a+c} = \overline{a' + c'} = p.
\]
As in the proof of Lemma~\ref{l:locateouterbetti}, repeating this process eventually terminates in a factorization $a'' + c'' \in \mathsf Z_N(p)$, at which point we obtain $z' \cdot x > (a'' + c'') \cdot x = z \cdot x$.  
\end{proof}

\begin{example}\label{e:innerfactsminimal}
The condition that $x$ satisfies all Betti inequalities strictly cannot be omitted from Proposition~\ref{p:innerfactsminimal}.  Indeed, consider the modular nilsemigroup $N$ with $m = 9$ and atom set $A = \{1,3,4,6,7\}$ depicted on the right in Figure~\ref{f:nonkunzex}.  The outer Betti elements of $N$ have factorizations
\begin{align*}
&
e_1 + e_3,
\quad
e_4 + e_3,
\quad
e_7 + e_3,
\quad
e_1 + e_6,
\quad
e_4 + e_6,
\quad
e_7 + e_6,
\\
&
2e_1,
\quad
2e_4,
\quad
2e_7,
\quad
2e_3,
\quad
2e_6, 
\quad
e_3 + e_6, 
\quad \text{and} \quad
e_1 + e_4 + e_7.
\end{align*}
One can check the point $(x_1, x_3, x_4, x_6, x_7) = (1,3,1,4,1)$ satisfies all Betti inequalities, but $z = e_6 \in \mathsf Z_N(1)$ and $z' = 2e_1 + e_4 \in \mathsf Z_N(\infty)$ have $z \cdot x = 4 > 3 = z' \cdot x$.  
\end{example}

\begin{thm}\label{t:outerbettihdesc}
If $F \subseteq \cC_m$ is a face with Kunz nilsemigroup $N$, then $p(F) = F_N$.
\end{thm}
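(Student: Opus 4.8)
The plan is to establish the two inclusions $p(F) \subseteq F_N$ and $F_N \subseteq p(F)$ separately. Throughout, write $N$ for the Kunz nilsemigroup of $F$, with atom set contained in $A$, and identify $N$ with $\ZZ_m \cup \{\infty\}$ modulo its Kunz subgroup $H$ as in Theorem~\ref{t:kunznilsemigroupface}, so that $N$ is a modular nilsemigroup in the sense of Definition~\ref{d:modularnilsemigroup} (when $H$ is nontrivial one first passes to $\ZZ_m/H$, but the Betti (in)equalities are insensitive to this). The first inclusion should be the routine one: given $x = p(y)$ for some $y \in F$, Proposition~\ref{p:circleoflightsinverse} tells us that $y_i = z \cdot x$ for any $z \in \mathsf{Z}_N(i)$. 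Hence every Betti equality of $N$ holds at $x$ (both sides compute the same coordinate $y_p$), so $x \in H_N$; and for an outer Betti factorization $z \in \mathsf{Z}_N(\infty)$ with $a \in \mathsf{Z}_N(\overline z)$, the facet inequalities of $\cC_m$ applied along a factorization chain from $z$ give $z \cdot x \geq y_{\overline z} = a \cdot x$, i.e. the Betti inequality holds. Thus $x \in F_N$.

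For the reverse inclusion $F_N \subseteq p(F)$, the idea is to use the piecewise-linear map $q$ and Lemma~\ref{l:interioratomsets}/Proposition~\ref{p:innerfactsminimal}. First handle a point $x$ in the relative interior of $F_N$ (satisfying all Betti inequalities strictly); I claim $q(x)$ lies in $F$. Define $y = q(x)$, so $y_i = \min\{c \cdot x : c_1 p_1 + \cdots + c_k p_k \equiv i\}$. Proposition~\ref{p:innerfactsminimal} is precisely the tool that shows, for each $i$ with representative $p$ in $N$, that this minimum is attained exactly on $\mathsf{Z}_N(p)$, so $y_i = z \cdot x$ for any $z \in \mathsf{Z}_N(i)$, and that the minimum over factorizations mapping to $\infty$ is strictly larger. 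This lets us check two things: (a) $y$ lies in the face $F$ of $\cC_m$ — because $y_i + y_j = y_{i+j}$ exactly when $\overline i \oplus \overline j \neq \infty$ in $N$ (equality from the Betti equalities / additivity of factorizations, strict inequality $y_i + y_j > y_{i+j}$ when the sum is nil, from the strict Betti inequalities), which by Theorem~\ref{t:kunznilsemigroupface} says $y$ is in the relative interior of the face with Kunz nilsemigroup $N$, namely $F^\circ$; and (b) $p(y) = x$ by construction of $y$ as evaluated on the coordinates in $A$ (each $y_{p_i} = x_i$ since the trivial factorization $e_i$ is in $\mathsf{Z}_N(p_i)$ and is minimal by Proposition~\ref{p:innerfactsminimal}). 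Hence $x = p(y) \in p(F)$, giving $F_N^\circ \subseteq p(F)$. Since $p(F)$ is closed and $F_N$ is the closure of $F_N^\circ$ (as $F_N$ is shown full-dimensional in $H_N$ — this is where one invokes that $N$ is Kunz, or alternatively observes $p(F)$ is a closed set containing the dense subset $F_N^\circ$ of $F_N$), we conclude $F_N \subseteq p(F)$.

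The main obstacle I anticipate is part (a) above: verifying that $q(x)$ lands in $F^\circ$ rather than merely in $\cC_m$, i.e. that the additive relations of $N$ are reproduced *exactly* — no extra coordinate equalities $y_i + y_j = y_{i+j}$ sneak in beyond those forced by $N$, and conversely all of $N$'s relations do hold. The forward direction (relations of $N$ force equalities in $y$) is a clean induction along factorization chains using the Betti equalities. The delicate direction is the strictness: one must show that if $\overline i \oplus \overline j = \infty$ in $N$, then $y_i + y_j > y_{i+j}$, and this is exactly the content of Proposition~\ref{p:innerfactsminimal} combined with Lemma~\ref{l:locateouterbetti} — writing $e_i\text{-part} + e_j\text{-part}$ as a factorization of $\infty$, it dominates (strictly, by strict Betti inequalities) some factorization of $\overline{i+j}$, whence $y_i + y_j$ strictly exceeds $y_{i+j}$. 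Assembling these pieces carefully — and checking the boundary-point reduction via closedness — completes the argument.
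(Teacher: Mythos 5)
Your proposal is correct and follows essentially the same route as the paper: establish $p(F) \subseteq F_N$ via Proposition~\ref{p:circleoflightsinverse}, reduce the converse to interior points of $F_N$ (both being rational polyhedral cones), and then use Proposition~\ref{p:innerfactsminimal} to verify that the point $y$ with $y_i = z\cdot x$ for $z \in \mathsf Z_N(i)$ satisfies the facet inequalities of $\cC_m$ and the defining equalities of $F$, hence lies in $F$ with $p(y) = x$. The only cosmetic difference is that the paper does not explicitly route through $q$ and does not bother to show $y \in F^\circ$ (only $y \in F$ is needed); otherwise the decomposition, the key lemma, and the reduction-to-interior step all coincide.
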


\begin{proof}
Proposition~\ref{p:circleoflightsinverse} implies $p(F) \subseteq F_N$.  Conversely, fix $x \in F_N$.  Since $p(F) \subseteq F_N$ and both are rational polyhedral cones, it suffices to assume $x \in F_N^\circ$.  Fix $p, q \in \ZZ_m \setminus \{0\}$ so that $p + q \ne 0$, and fix factorizations $z_1 \in \mathsf{Z}_N(p)$ and $z_2 \in \mathsf{Z}_N(q)$. We must show for any $z \in \mathsf{Z}_N(p + q)$, we have $(z_1+z_2)\cdot x \geq z \cdot x$. If $z_1 + z_2 \in \mathsf{Z}_N(p + q)$, then $(z_1 + z_2) \cdot x = z \cdot x$.  Otherwise, $z_1 + z_2 \in Z_N(\infty)$, so $(z_1 + z_2)\cdot x \geq z \cdot x$ by Lemma~\ref{p:innerfactsminimal}.  Thus $x \in p(F)$, thereby completing the proof.
\end{proof}

\begin{thm} \label{t:kunzcondition}
A modular nilsemigroup $N$ is Kunz if and only if there is a point $x \in F_N$ satisfying all Betti inequalities strictly.  
\end{thm}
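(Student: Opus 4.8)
The forward direction is essentially Theorem~\ref{t:outerbettihdesc} together with Theorem~\ref{t:purefan}. The plan is: if $N$ is Kunz, pick a face $F \subseteq \cC_m$ with Kunz nilsemigroup $N$; then $N$ has atom set exactly $A$, so $F \in \mathcal F(m;A)$, and $p(F)$ is a cone in the Kunz fan $\mathcal G(m;A)$. By Corollary~\ref{c:chamberstaircase} and the fact that every non-nil element of a Kunz nilsemigroup with $k = |A|$ atoms need not factor uniquely, I instead argue directly: by Theorem~\ref{t:outerbettihdesc}, $p(F) = F_N$, and by Lemma~\ref{l:interioratomsets} (applied to a point of $C(m;A)^\circ$ whose $q$-image lands in the right face, or more simply by observing that $F$ has a relative interior point all of whose projected coordinates are strictly positive and satisfy the facet inequalities of $C(m;A)$ strictly), $F_N^\circ$ is nonempty and any $x \in F_N^\circ$ satisfies all Betti inequalities strictly — since a Betti inequality $z \cdot x \ge a \cdot x$ holds with equality only if both sides define the same facet or if $z \in \mathsf Z_N(\overline z)$, which contradicts $z \in \mathsf Z_N(\infty)$ for $z$ an outer Betti factorization. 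So it remains to check that equality in a Betti inequality forces $x$ out of $F_N^\circ$; this follows from Proposition~\ref{p:innerfactsminimal} read contrapositively, or from the observation that a relative-interior point of $F_N$ cannot lie on the boundary hyperplane cut out by any single Betti inequality that does not already follow from the Betti equalities.

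For the reverse direction — the substantive half — suppose $x \in F_N$ satisfies all Betti inequalities strictly. I want to produce a face $F \subseteq \cC_m$ whose Kunz nilsemigroup is $N$. The natural candidate is $F = $ the face of $\cC_m$ containing the point $y = q(x) \in \RR^{m-1}$, where $q$ is the piecewise-linear map from Section~\ref{sec:kunzfan} extended via the modular structure $f : \ZZ_m \to N \setminus \{\infty\}$ (define $y_i = \min\{c \cdot x : \overline c = i \text{ in } N\}$, taking the minimum over $c \in \ZZ_{\ge 0}^k$). First I would verify $y \in \cC_m$: for nonzero $i, j$ with $i + j \ne 0$, pick factorizations realizing $y_i$ and $y_j$; their sum $c + c'$ satisfies $\overline{c + c'} \in \{i+j, \infty\}$, and in either case $y_{i+j} \le (c+c') \cdot x = y_i + y_j$ by minimality (using Proposition~\ref{p:innerfactsminimal} in the $\infty$ case to compare against a genuine factorization of $i+j$). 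Next, I would identify the Kunz nilsemigroup of the face $F$ containing $y$. By Theorem~\ref{t:kunznilsemigroupface}, this amounts to determining, for each pair $a, b$, whether $y_a + y_b = y_{a+b}$: equality must hold precisely when $\overline{e_a' + e_b'} = a + b$ in $N$ for suitable factorizations — here the strictness of the Betti inequalities is exactly what prevents a "spurious" equality $y_a + y_b = y_{a+b}$ when $a \oplus b = \infty$ in $N$. Concretely, if $\overline{c + c'} = \infty$ then by Lemma~\ref{l:locateouterbetti} $(c + c') \cdot x$ strictly exceeds $a' \cdot x$ for some $a' \in \mathsf Z_N(\overline{a' }) \ni$ a factorization of $a+b$ (or of whatever $\overline{\text{truncation}}$ is finite), forcing $y_{a+b} < y_a + y_b$, so the face $F$ sees the sum as nil; conversely if $a \oplus b = a + b$ in $N$ then minimality and the Betti *equalities* give $y_a + y_b = y_{a+b}$.

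The main obstacle I expect is the bookkeeping in the second step: showing that the face $F \subseteq \cC_m$ through $y = q(x)$ has Kunz nilsemigroup *equal to* $N$ and not merely a quotient or degeneration of it. One must rule out (a) the Kunz subgroup $H$ of $F$ being nontrivial — handled by noting $x$ has strictly positive coordinates (each $x_i \ge$ some $x_j > 0$ via the Betti inequality $x_i \ge c \cdot x$ with $\overline c = p_i$, which is itself a Betti inequality or follows from one) so every $y_i > 0$ for $i \notin \{0\}$; and (b) two atoms of $N$ collapsing, or a non-atom of $N$ becoming an atom of the face's nilsemigroup — both controlled by the strict Betti inequalities, which say precisely that no outer Betti factorization evaluates to the same $x$-value as a genuine factorization of its image. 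I would organize this as: strictness $\Rightarrow$ the equality relation $\{(a,b) : y_a + y_b = y_{a+b}\}$ on $\ZZ_m$ coincides with the $\oplus$-relation of $N$ $\Rightarrow$ the Kunz nilsemigroup of $F$ is $N$ by Theorem~\ref{t:kunznilsemigroupface}. The comparison arguments are all instances of Proposition~\ref{p:innerfactsminimal} and Lemma~\ref{l:locateouterbetti}, so the proof should be short once the correct point $y$ is written down; the delicate point is simply making sure every case of "is $y_a + y_b = y_{a+b}$?" is matched to the right Betti (in)equality.
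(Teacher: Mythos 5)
Your proposal takes essentially the same route as the paper's proof. In the reverse direction you directly construct $y = q(x)$, verify $y \in \cC_m$, and identify the Kunz nilsemigroup of the face of $\cC_m$ whose relative interior contains $y$; the paper reaches the same place by routing through Theorem~\ref{t:purefan} (whose proof is precisely this construction): Proposition~\ref{p:innerfactsminimal} gives $x \in C(m;A)^\circ$, the fan partition places $x$ in $p(F)^\circ$ for a face $F$ with Kunz nilsemigroup $M$ of atom set $A$, and then Proposition~\ref{p:innerfactsminimal}, applied to both $N$ and $M$, forces $\mathsf{Z}_N(p) = \mathsf{Z}_M(p)$ for every $p$, hence $N = M$. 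The bookkeeping you worried about (ruling out a nontrivial Kunz subgroup, or collapsing atoms) is what Lemma~\ref{l:interioratomsets} packages, so your instinct to lean on $x \in C(m;A)^\circ$ is exactly right.

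One point to tighten in your forward direction: the stated justification, that a Betti inequality $z \cdot x \ge a \cdot x$ ``holds with equality only if both sides define the same facet or if $z \in \mathsf{Z}_N(\overline{z})$,'' is not a valid dichotomy. A priori the inequality could hold with equality on all of $F_N$, and neither of your two alternatives addresses that possibility. The reason it cannot happen, left implicit in the paper's ``$p(F^\circ) = F_N^\circ$, so\dots,'' is that each Betti inequality of $N$ refines a facet inequality of $\cC_m$ that is strict on $F^\circ$: since $z$ is an outer Betti factorization, there is $i \in \supp(z)$ with $z - e_i \in \mathsf{Z}_N(q)$ for some non-nil $q$ with $q \oplus p_i = \infty$ in $N$, and so for $y \in F^\circ$ one has $z \cdot p(y) = y_q + y_{p_i} > y_{\overline{z}} = a \cdot p(y)$, the strict inequality being the facet inequality $x_q + x_{p_i} \ge x_{q + p_i}$ of $\cC_m$, which is not an equality on $F$ and hence is strict on $F^\circ$.
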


\begin{proof}
If $N$ is a Kunz nilsemigroup, then there is some corresponding face $F \in \cC_m$, and by Theorem~\ref{t:outerbettihdesc}, $p(F) = F_N$.  
Since $\dim p(F) = \dim F$ by Proposition~\ref{p:circleoflightsinverse}, $p(F^\circ) = F_N^\circ$, so $p(x)$ satisfies all Betti inequalities strictly for any $x \in F^\circ$. 

Conversely, suppose $N$ is modular and there is some point $x$ that satisfies all Betti inequalities of $N$ strictly. By Proposition~\ref{p:innerfactsminimal}, $x \cdot e_i < z \cdot x$ for any $z \in \mathsf Z_N(\infty)$ with $\overline z = p_i$, so $x \in C(m;A)^\circ$.  Since the interiors of the cones of $\mathcal G(m;A)$ partition $C(m;A)^{\circ}$ by Theorem~\ref{t:purefan}, $x$ lies in the interior of $p(F)$ for some face $F \subseteq \cC_m$ whose Kunz nilsemigroup $M$ has atom set $A$. From this, we can conclude that $N = M$, since they have the same atom set and Proposition~\ref{p:innerfactsminimal} forces $Z_N(p) = Z_M(p)$ for any $p \in \ZZ_m$.  
\end{proof}

\begin{example}\label{e:kunzcondition}
Consider the nilsemigroup $N$ from Example~\ref{e:nonkunznilsemigroup}.  Even though $N$ is not Kunz, we have $F_N \in \mathcal G(8; 1,2,5,6)$.  Indeed, letting $M$ denote the modular nilsemigroup obtained from $N$ by adding $e_5 + e_6$ as a factorization of $3$, then $M$ is Kunz by Theorem~\ref{t:kunzcondition}, and one can check via computation that $F_N = F_M$.  

On the other hand, if $N$ is the nilsemigroup from Example~\ref{e:innerfactsminimal} and $A = \{1,3,4,6,7\}$, then one can verify $F_N \notin \mathcal G(9;A)$ computationally.  Indeed, $\mathcal G(9;A)$ has 6 chambers, all of which share a 3-dimensional face that is propertly contained in $F_N$.  
\end{example}

\section{Walking the Kunz fan}
\label{sec:kunzwalk}

Throughout this section, let $A = \{p_1,\ldots,p_k\} \subseteq \ZZ_m \setminus \{0\}$ with $\gcd(A,m) = 1$, and let $N$ be a Kunz nilsemigroup with atom set $A$.  

The goal of this section is to give an algorithm, outlined below, for computing the faces of $\cC_m$ with atom set $A$ by ``walking'' the chambers of $\mathcal G(m; A)$.  Using results in this section, every step can be completed combinatorially (i.e., in terms of nilsemigroups, without relying on polyhedral computations) with the exception of step~(3).  

\begin{enumerate}[(1)]
\item 
Locate a point $x \in \RR_{\ge 0}^k$ so that the face $F_N \in \mathcal G(m; A)$ with $x \in F_N$ has $N$ staircase.  This can be done by applying $q$ to small random perturbation of the all-1's vector, since the all-1's vector is guaranteed to lie in some face of $\mathcal G(m; A)$, and in any open $k$-dimensional ball centered there, the set of points contained in non-maximal faces of $\mathcal G(m; A)$ has measure 0.  Recall that $N$, along with its outer Betti elements, can be computed from $x$ via the circle-of-lights algorithm, as in Remark~\ref{r:circleoflightsinverse}.  

\item 
Use Theorem~\ref{t:outerbettihdesc} to obtain an $H$-description of $F_N$ with one inequality per outer Betti elements of $N$.  

\item 
Obtain an irredundant $H$-description of $F_N$ by identifying which outer Betti elements of $N$ yield supporting inequalities of $F_N$.  We were not able to accomplish this without the use of a polyhedral computation (e.g. \texttt{Normaliz}~\cite{normaliz3}); Proposition~\ref{p:reduandantbettis} and Remark~\ref{r:cutandpaste} identify sufficient and necessary conditions, respectively, but fall short of a full characterization.  

\item 
Determine which facets of $F_N$ lie on the boundary of $C(m; A)$; we identify a combinatorial method for doing so in Proposition~\ref{p:fanboundary}.  

\item 
For each remaining outer Betti element, determine the Kunz nilsemigroup $N'$ for which $F_{N'}$ shares a facet with $F_N$; we identify a combinatorial method for doing so in Theorem~\ref{t:cutandpaste}.  

\item 
Using a graph traversal algorithm (e.g., depth-first search or breadth-first search), repeat steps (2) through (5) to compute a full list of the faces of $\mathcal G(m; A)$.  

\end{enumerate}

Prior efforts to enumerate the faces of $\cC_m$ relied on computing the entire face lattice, or faces up to a particular codimension~\cite{wilfmultiplicity}.  The above allows one to compute the faces containing numerical semigroup of small embedding dimension, which are of bounded dimension.  
For instance, when verifying \cite[Conjecture~7.1]{minprescard}, we computed all faces of every Kunz fan with $k = 3$ and $m \le 42$ on our personal machines in under a day (we ran out of memory at $m = 43$), whereas computing the full face lattice for $m = 18$ for~\cite{wilfmultiplicity} took a cluster 3 weeks.  The above is also prime for parallelization, since if one fixes $m$ and $k$, each atom set $A$ can be run independently.  

The above algorithm turns out to coincide with the notion of a Gr\"obner walk~\cite{gfanpaper}; Remark~\ref{r:groebnerfans} makes this connection explicit.

\begin{defn}\label{d:irredundantinequality}
Fix a staircase Kunz nilsemigroup $N$.  We say an outer Betti element $z \in \mathsf Z_N(\infty)$ is \emph{irredundant} if its Betti inequality $z \cdot x \ge a \cdot x$ defines a facet of $F_N$, where $a \in \mathsf Z_N(\overline z)$.  
\end{defn}

We briefly identify some redundant outer Betti elements, for use in Section~\ref{sec:applications}.  

\begin{prop}\label{p:reduandantbettis}
Let $N$ be a staircase Kunz nilsemigroup.  
\begin{enumerate}[(a)]
\item 
Any two outer Betti elements $b, b' \in \mathsf Z_N(\infty)$ with $\overline b = \overline b'$ must have disjoint support.  

\item 
If $b \in \mathsf Z_N(\infty)$ is an outer Betti element and $\overline b = 0$, then $b$ is irredundant if and only if $\supp(b) = \{i\}$ and $a_1, \ldots, a_k$ are distinct modulo $\gcd(i,m)$.  

\item 
There is at most one outer Betti element of $N$ with full support, and such an outer Betti element is redundant if $|A| > 1$.  

\end{enumerate}
\end{prop}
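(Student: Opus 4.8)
The plan is to analyze each of the three parts largely via the component-wise partial order on $\mathsf Z_N(\infty)$ together with the defining Betti (in)equalities, using the staircase hypothesis (unique factorization of non-nil elements) heavily throughout.

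For part~(a), suppose $b, b' \in \mathsf Z_N(\infty)$ are outer Betti elements with $\overline b = \overline b'$ and $\supp(b) \cap \supp(b') \ne \nothing$, say $i$ lies in both supports. Since $b$ is an outer Betti element, $b - e_i = \mathsf Z_N(p)$ for $p = \overline{b - e_i}$, and likewise $b' - e_i = \mathsf Z_N(p')$ for $p' = \overline{b' - e_i}$; because $\overline b = \overline{b'}$ and we removed the same atom $p_i$, we get $p = p'$, hence $\mathsf Z_N(p)$ is a singleton (by the staircase property) equal to both $\{b - e_i\}$ and $\{b' - e_i\}$, forcing $b - e_i = b' - e_i$ and thus $b = b'$. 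So distinct outer Betti elements with the same image have disjoint support.

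For part~(b), take $b$ an outer Betti element with $\overline b = 0$, i.e.\ $b \in \mathsf Z_N(\infty)$ projects to $0 \in \ZZ_m$ and $a = \0 \in \mathsf Z_N(0)$, so the Betti inequality is $b \cdot x \ge 0$. If $\abs{\supp(b)} \ge 2$, pick $i,j \in \supp(b)$; because $b - e_i = \mathsf Z_N(\overline{b-e_i})$ is a singleton, $b - e_i$ is itself an outer-Betti-like configuration, and one shows the inequality $b \cdot x \ge 0$ is implied by $(b - e_i)\cdot x \ge $ (the $i$-coordinate value), reducing redundancy to a single-support case; the cleanest route is to observe $b\cdot x \ge 0$ follows from the strict positivity of coordinates plus the single-support inequalities already present, hence is redundant. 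If $\supp(b) = \{i\}$, then $b = n e_i$ with $n p_i \equiv 0 \bmod m$, so $n$ is a multiple of $m/\gcd(p_i,m)$; minimality (in $Z$) forces $n = m/\gcd(i,m)$, and the inequality reads $n x_i \ge 0$, which is automatic and hence redundant — \emph{unless} removing it actually changes $F_N$, which by Theorem~\ref{t:outerbettihdesc} and the geometry of $C(m;A)$ happens exactly when this inequality is the one cutting out the boundary facet $x_i \ge 0$ of the ambient cone. The condition ``$a_1,\ldots,a_k$ distinct modulo $\gcd(i,m)$'' (here the $a$'s presumably denote the atoms $p_1,\dots,p_k$ read modulo $\gcd(i,m)$) is precisely what guarantees that no shorter relation among the atoms collapses coordinate $i$, so that $x_i = 0$ defines a genuine face rather than being forced to a lower-dimensional stratum; I would verify this by checking that $\gcd(A,m)=1$ together with the distinctness condition is equivalent to the facet $x_i \ge 0$ of $C(m;A)$ being attained within $\mathcal G(m;A)$, mirroring the discussion of the boundary facet in Example~\ref{e:2dfan}.

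For part~(c), suppose $b, b' \in \mathsf Z_N(\infty)$ both have full support $\{1,\ldots,k\}$. By part~(a), if $\overline b = \overline{b'}$ they would need disjoint support, impossible when $k \ge 1$; so we may assume $\overline b \ne \overline{b'}$ — but then, since $b - e_i$ and $b' - e_i$ are singleton factorization sets for all $i$, a short argument comparing $b$ and $b'$ coordinatewise (one is $\ge$ the other in each coordinate by minimality in $Z$, or they are incomparable, in which case $\min(b,b')$ would give a smaller element of $\mathsf Z_N(\infty)$ still of full support, contradicting minimality) forces $b = b'$. Hence there is at most one full-support outer Betti element. For its redundancy when $\abs{A} > 1$: writing $b$ for this element and $a \in \mathsf Z_N(\overline b)$, the inequality $b \cdot x \ge a \cdot x$ can be rewritten using the fact that $b = z + e_i$ with $z - e_j$ singleton for the other atoms; concretely, full support means we can ``route'' through intermediate non-nil elements, and each such step is governed by a shorter-support Betti inequality. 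I expect the argument to be: $b \cdot x - a \cdot x$ is a nonnegative combination of the slacks of other Betti inequalities (those coming from outer Betti elements of smaller support, which exist because $k \ge 2$ and $N$ is staircase so $\infty$ has more than one minimal factorization), so the full-support inequality is in the cone generated by the others and defines no facet.

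The main obstacle I anticipate is part~(b): pinning down exactly why the arithmetic condition ``$p_1,\ldots,p_k$ distinct modulo $\gcd(i,m)$'' is both necessary and sufficient for the inequality $x_i \ge 0$ (equivalently $\tfrac{m}{\gcd(i,m)} x_i \ge 0$) to be irredundant. Redundancy of a ``$\ge 0$'' inequality is unusual — it can only fail to be redundant if it is recording a genuine boundary facet of $C(m;A)$ that would otherwise be lost — so the real content is a characterization of when the coordinate hyperplane $x_i = 0$ meets $\mathcal G(m;A)$ in something of the right dimension, i.e.\ when the atoms, reduced modulo $d = \gcd(i,m)$, still generate enough of $\ZZ_d$ to avoid an unexpected linear dependence; I would prove this by translating to the Kunz subgroup $H$ of the relevant face (as in Theorem~\ref{t:kunznilsemigroupface}) and showing $H$ is forced to be nontrivial exactly when the distinctness fails. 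The remaining parts are essentially bookkeeping with the component-wise order once the staircase-implies-singleton-factorization observation is in hand.
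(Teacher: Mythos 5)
Part (a) matches the paper's argument. The trouble begins in part (b): you never actually carry out the $\supp(b) = \{i\}$ case, and your heuristic misidentifies the obstruction --- since $\gcd(A,m)=1$, the atoms always generate $\ZZ_d$ where $d = \gcd(p_i,m)$; what governs the face dimension is whether they are \emph{pairwise distinct} mod $d$, not whether they generate. The paper settles this by a projection argument (via \cite[Cor.~3.7]{kunzfaces1} together with Proposition~\ref{p:circleoflightsinverse}): the slice $C(m;A) \cap \{x_i = 0\}$ projects faithfully onto the lower Kunz cone $C(d; \overline A \setminus \{\overline 0\})$, whose ambient dimension is the number of distinct nonzero atom residues mod $d$; that number is $k-1$ exactly when the distinctness holds.

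Part (c) contains a genuine error. You argue that if $b \ne b'$ are incomparable full-support outer Betti elements, then $\min(b,b')$ is ``a smaller element of $\mathsf Z_N(\infty)$ still of full support, contradicting minimality.'' But $\min(b,b')$ is strictly below $b$, and since $b - e_i$ factors a non-nil element for every $i$ (staircase) and the non-nil factorizations form a downset, \emph{every} factorization strictly below $b$ factors a non-nil element --- so $\min(b,b')$ is not in $\mathsf Z_N(\infty)$, and no contradiction arises from that direction. The ingredient you are missing, which the paper's ``immediately follows'' leaves implicit, is that any full-support outer Betti element $b$ necessarily has $\overline b = 0$: let $a$ be the unique factorization of $\overline b$ and write $a - b = v^+ - v^-$ with $v^\pm \in \ZZ_{\ge 0}^k$ of disjoint supports. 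Then $v^+ \le a$, so $v^+$ is a non-nil factorization, while $v^- \le b$ with $v^- = b$ only when $a = 0$ (using full support of $b$). If $a \ne 0$, then $v^- < b$ is also a non-nil factorization, and $\overline{v^+} = \overline{v^-}$ together with unique factorization forces $v^+ = v^- = 0$, hence $a = b$, a contradiction. So $a = 0$, i.e.\ $\overline b = 0$. With that in hand, (a) gives uniqueness (two full-support elements with image $0$ would need disjoint supports) and (b) gives redundancy for $k \ge 2$; your ``routing through intermediate non-nil elements'' sketch is not a substitute for this step.
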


\begin{proof}
Suppose $b_i > 0$ and $b_i' > 0$ for some $i$.  Since $b$ and $b'$ are outer Betti elements, we have $b - e_i, b' - e_i \in Z_N(\overline b - p_i)$, and since $N$ is staircase, $b - e_i = b' - e_i$ so $b = b'$.  

Next, if $b_i > 0$ and $b_j > 0$, then $b$ is redundant by the Farkas lemma, as its Betti inequality follows from $x_i \ge 0$ and $x_j \ge 0$.  As such, suppose $b_i$ is the only nonzero entry of $b$.  Writing $d = \gcd(i,m)$ for the order of $i \in \ZZ_m$, \cite[Corollary~3.7]{kunzfaces1} and Proposition~\ref{p:circleoflightsinverse} imply $C(m; A) \cap \{x \in \RR^d : x_i = 0\}$ projects faithfully onto $C(d; \overline A \setminus \{\overline 0\})$, where $\overline A \subseteq \ZZ_d$ is the set of residue classes of $a_1, \ldots, a_k$ in $\ZZ_d$.  This cone has dimension $k-1$ if and only if $a_1, \ldots, a_k$ are distinct modulo $d$, thereby proving the second claim.  

Lastly, the final claim immediately follows from the first and second.  
\end{proof}

\begin{prop}\label{p:fanboundary}
Suppose $N$ is a staircase Kunz nilsemigroup and $b \in \mathsf Z_N(\infty)$ is an irredundant outer Betti element.  The~facet of $F_N$ supported by the Betti inequality for $b$ lies on the boundary of $C(m;A)$ if and only if either (i)~$\overline b \in A$, or (ii)~$\overline b = 0$.  
\end{prop}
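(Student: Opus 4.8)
The plan is to compare $F_N$, a face of the Kunz fan living inside $C(m;A)$, with the ambient cone $C(m;A)$ via Proposition~\ref{p:circleoflightsinverse} and the $H$-description from Theorem~\ref{t:outerbettihdesc}. A facet of $F_N$ lies on the boundary of $C(m;A)$ precisely when every facet inequality of $C(m;A)$ that it fails to satisfy strictly is in fact tight on all of that facet. Since $C(m;A)$ is cut out by the inequalities $x_i \le c\cdot x$ for $c_1p_1 + \cdots + c_kp_k \equiv p_i \bmod m$, and by Proposition~\ref{p:innerfactsminimal} the ``active'' such inequalities on a chamber $F_N$ correspond to outer Betti elements $z$ with $\overline z = p_i$, the relevant comparison reduces to: the facet of $F_N$ cut out by the Betti inequality $b\cdot x \ge a\cdot x$ lies on $\partial C(m;A)$ iff that facet-hyperplane forces $x_i = z\cdot x$ for some outer Betti $z$ with $\overline z \in A$ (so that $x$ leaves the interior of $C(m;A)$), or the facet is the coordinate wall $x_j = 0$ itself, which by Lemma~\ref{l:interioratomsets} (or directly, the definition of the fan) forces a nontrivial Kunz subgroup and hence corresponds to $\overline b = 0$.

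Concretely, I would argue as follows. First, suppose (i) $\overline b \in A$, say $\overline b = p_i$; then $b$ is an outer Betti element with $\overline b = p_i$, so by Proposition~\ref{p:innerfactsminimal}, points in $F_N^\circ$ satisfy $x_i < b\cdot x$ strictly, whereas on the facet supported by $b\cdot x = a\cdot x$ (with $a \in \mathsf Z_N(p_i)$ a genuine factorization, so $a\cdot x = x_i$ on $F_N$ by the definition of $F_N$), we get $x_i = a\cdot x = b\cdot x$; this is exactly the equality case of a defining inequality $x_i \le b\cdot x$ of $C(m;A)$, so the facet lies on $\partial C(m;A)$. Second, suppose (ii) $\overline b = 0$; by Proposition~\ref{p:reduandantbettis}(b), irredundancy of $b$ forces $\supp(b) = \{j\}$, so the Betti inequality is $b_j x_j \ge 0$, i.e. $x_j \ge 0$, and the supported facet is $x_j = 0$; since $\gcd(A, m) = 1$ but $x_j = 0$ means the Kunz subgroup is nontrivial, no Apéry point (indeed no point of $C(m;A)^\circ$, which has all coordinates positive) lies on this facet, so it is on the boundary. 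For the converse, suppose the facet supported by $b$ lies on $\partial C(m;A)$; then some defining inequality of $C(m;A)$, say $x_i \le c\cdot x$, is tight on the whole facet but not on $F_N^\circ$. By Proposition~\ref{p:innerfactsminimal} the only such inequalities active on $F_N$ come from outer Betti elements $z$ with $\overline z = p_i$, so the facet is cut out by (a positive combination involving) such a $z$'s Betti inequality; since $N$ is staircase, $F_N$ is full-dimensional in $\RR^k$ and Theorem~\ref{t:outerbettihdesc} gives an irredundant $H$-description by Betti inequalities, forcing $b$ itself to be this $z$ up to the classification in Proposition~\ref{p:reduandantbettis}, whence $\overline b = \overline z = p_i \in A$ or $\overline b = 0$.

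The main obstacle I anticipate is the converse direction, specifically making rigorous the claim that ``the only facet inequalities of $C(m;A)$ that can be active on a facet of $F_N$, but not on $F_N^\circ$, are the Betti inequalities attached to outer Betti elements mapping into $A \cup \{0\}$.'' This requires carefully coupling three things: the $H$-description of $C(m;A)$ (where the inequalities $x_i \le c\cdot x$ with $\overline c = p_i$ but $c$ *not* a factorization in $N$ are the ones that become strict on $F_N^\circ$, by Proposition~\ref{p:innerfactsminimal}), the irredundant Betti-inequality $H$-description of the chamber $F_N$ (Theorem~\ref{t:outerbettihdesc} together with Definition~\ref{d:irredundantinequality}), and the staircase hypothesis which via Proposition~\ref{p:reduandantbettis} pins down exactly which outer Betti elements can be irredundant. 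I would handle this by showing that crossing the facet of $F_N$ supported by an irredundant $b$ with $\overline b \notin A \cup \{0\}$ lands us in the relative interior of a *neighboring* chamber $F_{N'}$ still inside $C(m;A)^\circ$ (using Theorem~\ref{t:cutandpaste} or at least the fact that the chambers partition $C(m;A)^\circ$, from Theorem~\ref{t:purefan}), giving the needed contrapositive; the bookkeeping of which coordinate inequality $x_i \le c\cdot x$ is tight, and verifying it is *not* tight generically on $C(m;A)$ away from $F_N$, is where the argument will need the most care.
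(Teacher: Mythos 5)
Your approach is the same as the paper's: match the Betti inequality supporting the facet against the defining inequalities of $C(m;A)$, which come in two syntactic forms ($c\cdot x \ge x_i$ with $\overline c = p_i$, or $x_j \ge 0$), and read off cases (i) and (ii). Your forward direction is correct and spelled out in the right level of detail: for $\overline b = p_i$ the unique factorization of $\overline b$ is $e_i$ so the Betti inequality $b\cdot x \ge x_i$ is literally a defining inequality of $C(m;A)$, and for $\overline b = 0$ Proposition~\ref{p:reduandantbettis}(b) forces singleton support so the inequality is a coordinate wall.

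Your worry about the converse is overcalibrated, and the machinery you propose (crossing into a neighboring chamber via Theorem~\ref{t:cutandpaste}) is unnecessary. The paper's argument is a pure dimension count: the facet of $F_N$ is $(k-1)$-dimensional, so it determines a \emph{unique} supporting hyperplane, namely the hyperplane of the Betti inequality for $b$; if this facet lies on $\partial C(m;A)$ then it sits inside some facet of $C(m;A)$, also $(k-1)$-dimensional, and the two hyperplanes therefore coincide. Hence the Betti inequality is, up to positive scaling, one of the two forms, and the conclusion follows. There is no need to invoke Proposition~\ref{p:innerfactsminimal}, the chamber partition from Theorem~\ref{t:purefan}, or the wall-crossing structure — the fan language only obscures what is really an elementary observation about affine hulls of facets. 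The one piece you'd still want to verify (and that the paper itself leaves implicit) is that a positive rescaling of a Betti inequality landing on one of the two forms still forces $\overline b \in A$ or $\overline b = 0$; a short argument using $\supp(a) \cap \supp(b) = \nothing$ (which holds in the staircase case since $N$ has unique factorizations) and minimality of $b$ closes this, but it is a small computation rather than a structural step.
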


\begin{proof}
By definition, a facet inequality of $C(m;A)$ is either of the form $x_i \ge c \cdot x$ with 
$$c_1p_1 + \cdots + c_kp_k \equiv p_i \bmod m,$$
or of the form $x_i \ge 0$.  As such, an outer Betti element whose Betti inequality is one of these two forms must fall into the claimed case~(i) or~(ii), respectively.  
\end{proof}

\begin{example}
Resume notation from Example~\ref{e:2dfan}.  The boundaries between neighboring chambers are labeled in Figure~\ref{f:2dfan} by an equality defining the linear subspace of $\RR^2$ they lie in.  By Theorem~\ref{t:cutandpaste}, each such equality induces a single trade in the Kunz nilsemigroup corresponding to that codimension 1 face.  For instance, the ray between chambers~(a) and~(b) has corresponding Kunz nilsemigroup with the trade $(7,0) \sim (0,2)$, since every point on that ray must satisfy $7x_6 \ge 2x_{11}$ from chamber~(b) and $7x_6 \le 2x_{11}$ from chamber~(a).  Additionally, the ray between chambers~(b) and~(c) has Kunz nilsemigroup with the trade $(4,0) \sim (0,4)$, and thus no numerical semigroups lie on that ray by~\cite[Corollary~3.16]{adventure5}.  
\end{example}

\begin{thm}\label{t:cutandpaste}
Suppose $\dim F_N = k - 1$ and $F_N$ is not on the boundary of $C(m;A)$.  
\begin{enumerate}[(a)]
\item 
There is a unique inner Betti element $p \in N$, and $\mathsf Z_N(p) = \{z, z'\}$.  

\item 
For any non-nil $q \in N$, there exist unique factorizations $w, w' \in \mathsf Z_N(q)$ such that $z \not\preceq w$ and $z' \not\preceq w'$, where $\preceq$ denotes the component-wise partial order on $\ZZ_{\ge 0}^k$.  Moreover, $w \ne w'$ if and only if $p \preceq q$ in $N$.  

\item 
For a staircase Kunz nilsemigroup $N'$ with $F_N \subseteq F_{N'}$, either (i) $z \not\preceq w$ for all $q \in N'$ and $w \in \mathsf Z_{N'}(q)$, or (ii) $z' \not\preceq w$ for all $q \in N'$ and $w \in \mathsf Z_{N'}(q)$.  

\end{enumerate}
\end{thm}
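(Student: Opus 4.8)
The plan is to extract from $\dim F_N = k-1$ a single rank-one constraint and let it drive all three parts.

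\textbf{Part (a).} Since $N$ is Kunz, Theorem~\ref{t:kunzcondition} supplies a point of $F_N$ satisfying all Betti inequalities strictly, so $F_N$ is full-dimensional in $H_N$, whence $\dim H_N = k-1$. Thus the $\RR$-span of all Betti relation vectors $c - c'$ (over $c,c' \in \mathsf Z_N(i)$, $i$ non-nil) is a line; let $v$ be the shortest difference actually realized, so that the group generated by these differences is exactly $\ZZ v$, and $v = z - z'$ for some $z,z' \in \mathsf Z_N(p_0)$ with $p_0$ non-nil. Write $v = v^+ - v^-$ with $v^\pm \ge 0$ of disjoint support. A standard partial-cancellativity argument (as used throughout Section~\ref{sec:facetouterbetti}: with $w_0 := z - v^+ = z' - v^- \ge 0$, one has $\varphi_N(w_0) + \varphi_N(v^+) = \varphi_N(w_0) + \varphi_N(v^-) \ne \infty$, forcing $\varphi_N(v^+) = \varphi_N(v^-)$) shows $v^+,v^- \in \mathsf Z_N(r)$ for a common non-nil $r$, and since $\mathsf Z_N(0) = \{0\}$ we get $v^+,v^- \ne 0$. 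It remains to show $N$ has a \emph{unique} non-nil element with more than one factorization, with exactly two. This is the technical heart. I would argue by contradiction: if some non-nil element has a third factorization, or two distinct non-nil elements each have two, then subtracting a suitable atom $e_j$ (with $j \in \supp v^-$, which is nonempty) from the two smallest collinear factorizations of one such element, or forming the crossed sums $z_1 + z_2'$ and $z_1' + z_2$ of factorizations from two distinct Betti elements, manufactures a \emph{new} non-nil element with two factorizations; iterating, and using that $N$ is finite while atoms have a unique factorization (so the iteration can never land on an atom), yields a contradiction. Fixing the unique such $p$ with $\mathsf Z_N(p) = \{z,z'\}$, the same ``subtract an atom'' observation applied to $p$ forces $\supp z \cap \supp z' = \emptyset$, and minimality of $v$ forces $z - z' = v$, hence $z = v^+$, $z' = v^-$.

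\textbf{Part (b).} Fix non-nil $q$ and write $\mathsf Z_N(q) = \{w^{(1)}, \dots, w^{(s)}\}$ ordered so $w^{(i+1)} - w^{(i)} \in \RR_{>0}v$; by part (a) each difference is a positive integer multiple $\lambda_i v$ with $\lambda_i \ge 1$. I claim $w := w^{(1)}$ is the unique factorization of $q$ with $z \not\preceq w$ and $w' := w^{(s)}$ the unique one with $z' \not\preceq w$. Indeed, if $z \preceq w^{(1)}$ then $w^{(1)} - z + z' \ge 0$ and $\varphi_N(w^{(1)} - z + z') = \varphi_N(w^{(1)} - z) + \varphi_N(z') = \varphi_N(w^{(1)} - z) + \varphi_N(z) = q$, so $w^{(1)} - v \in \mathsf Z_N(q)$, contradicting minimality along $v$; and for $i \ge 2$, on $\supp z = \supp v^+$ we have $w^{(i)}_j \ge (w^{(i)} - w^{(i-1)})_j = \lambda_{i-1}v^+_j \ge v^+_j = z_j$ while $z_j = 0$ off $\supp z$, so $z \preceq w^{(i)}$. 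The claim for $w^{(s)}$ is symmetric. Finally $w \ne w'$ iff $s \ge 2$ iff $|\mathsf Z_N(q)| \ge 2$: if $p \preceq q$, say $p + e = q$, then adding a factorization of $e$ to $z$ and to $z'$ gives two factorizations of $q$; conversely, if $s \ge 2$, then $w^{(2)} = w^{(1)} + \lambda_1 v \ge 0$ forces $w^{(1)} \ge \lambda_1 v^- = \lambda_1 z'$, so writing $w_1 := w^{(1)} - \lambda_1 z' \ge 0$ gives $q = \varphi_N(w_1) + \lambda_1 p$ (the $\lambda_1$-fold sum of $p$), which since $q \ne \infty$ and $\lambda_1 \ge 1$ exhibits $p \preceq q$.

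\textbf{Part (c).} Let $N'$ be a staircase Kunz nilsemigroup with $F_N \subseteq F_{N'}$. By Corollary~\ref{c:chamberstaircase} $N'$ is a chamber, so $\dim F_{N'} = k$ and $F_N$ is a facet of $F_{N'}$. Since $F_N$ is not on the boundary of $C(m;A)$, its supporting hyperplane within $F_{N'}$ cannot be one of the boundary hyperplanes $\{x_i = 0\}$ or $\{x_i = c\cdot x\}$ of $C(m;A)$ (each of those hyperplanes cuts $C(m;A)$ in a subset of $\partial C(m;A)$), so by Theorem~\ref{t:outerbettihdesc} it is the Betti inequality of some irredundant outer Betti element $b \in \mathsf Z_{N'}(\infty)$ with partner $a \in \mathsf Z_{N'}(\overline b)$, i.e.\ $F_N = F_{N'} \cap \{b\cdot x = a\cdot x\}$. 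For $x$ in (the lift of) $F_N$ we then have $b\cdot x = a\cdot x = x_{\overline b}$ (the last by Proposition~\ref{p:circleoflightsinverse}, as $a$ is a genuine factorization of $\overline b$ in $N'$), and the Kunz inequalities $x_i + x_j \ge x_{i+j}$ force every prefix of $b$ to be a geodesic too, so by induction $b \in \mathsf Z_N(\overline b^N)$; since also $a \in \mathsf Z_{N'}(\overline b) \subseteq \mathsf Z_N(\overline b^N)$ and $a \ne b$, the element $\overline b^N$ has two factorizations, hence equals $p$ by part (a) and $\{a,b\} = \mathsf Z_N(p) = \{z,z'\}$. Now $b$ is $z$ or $z'$, and $\varphi_{N'}(b) = \infty$; therefore $b \not\preceq w$ for every $w \in \mathsf Z_{N'}(q)$ with $q$ non-nil (else $\varphi_{N'}(w) = \varphi_{N'}(b) + \varphi_{N'}(w-b) = \infty$). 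If $b = z$ this is case (i), and if $b = z'$ it is case (ii).

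\textbf{Main obstacle.} Everything outside part (a) is bookkeeping with the collinear structure. The real difficulty is the combinatorial claim inside (a) that the single rank-one constraint forces a \emph{unique} inner Betti element with exactly two support-disjoint factorizations: collinearity of all factorization sets is automatic, but ruling out a third collinear factorization or a second Betti element requires the ``manufacture a new Betti element, then invoke finiteness of $N$ and unique factorization of atoms'' iteration above, and making that iteration airtight—especially handling the case where an intermediate sum becomes nil, which blocks one branch of the recursion—is where the work lies.
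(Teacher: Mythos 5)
Your overall framework---extracting the rank-one constraint from $\dim F_N = k-1$, using partial cancellativity to produce disjoint-support factorizations $v^+, v^-$ of a common non-nil element, and tracking the trade through factorization sets---agrees with the paper's, and your part~(b) is essentially correct. However, there is a genuine gap in part~(a), which you flag yourself as ``where the work lies'': you do not actually prove uniqueness of the inner Betti element, nor that it has exactly two factorizations. The ``manufacture a new Betti element and iterate'' sketch is not carried out, and as you note, it founders when an intermediate sum becomes nil. The paper takes a different route that avoids the iteration: partial cancellativity forces every inner Betti element $p$ to have $\mathsf{Z}_N(p) = \{jdc^+, jdc^-\}$ \emph{exactly} (not merely containing these), from which uniqueness is nearly automatic, since for $j > 1$ the element $\varphi_N(jdc^+)$ would acquire intermediate factorizations such as $(j-1)dc^+ + dc^-$. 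The paper also uses the ``$F_N$ not on the boundary of $C(m;A)$'' hypothesis \emph{inside} part~(a), via Propositions~\ref{p:reduandantbettis}(c) and~\ref{p:fanboundary}, to guarantee $d\overline{c^+} \ne 0$; your proof of~(a) never invokes that hypothesis, which is another tell that an ingredient is missing.

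Part~(c) has a secondary, more repairable issue: the step ``the image of $b$ in $N$ has two factorizations, hence equals $p$ by part~(a)'' does not follow---part~(a) gives uniqueness of the \emph{inner Betti element} (one with disconnected factorization graph), not of non-nil elements with $\ge 2$ factorizations, and by part~(b) every $q$ with $p \preceq q$ already has several. The paper's argument for~(c) is also cleaner and bypasses the outer Betti machinery: for $x \in F_{N'}^\circ$, the factorizations of any non-nil $q \in N$ form an arithmetic progression in their dot products with $x$ (step $(z-z')\cdot x$), and since $N'$ is a staircase refinement of $N$, its unique factorization of $q$ is the minimizer, which by the sign of $(z-z')\cdot x$ is consistently the $w$ or consistently the $w'$ of part~(b).
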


\begin{proof}
Fix $c \in \ZZ^k$ with $\gcd(c) = 1$ and $H_N = \{x \in \RR^k : c \cdot x = 0\}$.  Write $c = c^+ - c^-$ where $c^+, c^- \in \ZZ_{\ge 0}^k$ have disjoint support, and let $d \in \ZZ_{\ge 1}$ be minimal with $\overline{d c^+} = \overline{d c^-}$.  
For any Betti element $p \in N$ and factorizations $z, z' \in \mathsf Z_N(p)$, we have $z - z' \in \ZZ c$.  As~such, $|\mathsf Z_N(p)| = 2$, as only one factorization of $p$ can avoid the positive (or~negative) support of $c$.  In particular, $\mathsf Z_N(p) = \{z, z'\}$ such that $z \in d \ZZ_{\ge 1} c^+$ and $z' \in d \ZZ_{\ge 1} c^-$.  To prove~(a), it suffices to show $d \overline{c^+}$ is a Betti element of $N$.  However, this follows from Proposition~\ref{p:circleoflightsinverse} and the fact that $d \overline{c^+} = d \overline{c^-} \ne 0$ by Propositions~\ref{p:reduandantbettis}(c) and~\ref{p:fanboundary} since $F_N$ is not on the boundary of $C(m; A)$.  

Having proven part~(a), let $p \in N$ denote the unique inner Betti element and write $\mathsf Z_N(p) = \{z,z'\}$.  Part~(b) then follows from part~(a), since any two factorizations of a given non-nil element $q \in N$ differ by an integer multiple of $z - z'$, and Proposition~\ref{p:circleoflightsinverse} ensures that if a factorization is preceded by $z$ or $z'$, then performing the trade $z \sim z'$ or $z' \sim z$, respectively, results in another factorization of $q$.  Lastly, for any $x \in F_{N'}^\circ$, the factorizations of any non-nil $q \in N$ are totally ordered by their dot product with~$x$ (in fact, they form an arithmetic sequence with step size $(z - z') \cdot x$), so since $N'$ is staircase it must be as prescribed in part~(c).  
\end{proof}

\begin{remark}\label{r:cutandpaste}
Theorem~\ref{t:cutandpaste} identifies a necessary condition for an outer Betti element $z$ of $N$ to be irredundant.  Indeed, one may na\"ively apply steps~(4) and~(5) for $z$ to obtain a hypothetical factorizations of each element of $\ZZ_m$.  If these do not form a valid staircase (e.g., some non-nil $p \in N$ and $z' \in \mathsf Z_N(p)$ has $z_i' > 0$ but $z' - e_i \notin \mathsf Z_N(p - a_i)$), then $z$ must be redundant.  For example, in Example~\ref{e:3dfan}, the Kunz nilsemigroup $N$ corresponding to the face in chamber~(e) has outer Betti element $z = (1,2,0)$ with $\overline z = 5$, but applying Theorem~\ref{t:cutandpaste} would move 5, 6, 7, and 8 to new locations in the Kunz poset, leave 6 ``dangling'' over the outer Betti element $(2,0,0)$.  
Geometrically, this amounts to crossing the hyperplane $x_1 + 2x_2 = 2x_9$ without first entering chamber~(d).  

Note that the condition identified in the previous paragraph is not sufficient.  Indeed, consider the Kunz nilsemigroup $N$ in Example~\ref{e:nonkunznilsemigroup}, and let $N'$ denote the nilsemigroup obtained from $N$ by replaing $\mathsf Z_{N'}(3) = \{(0,0,1,1)\}$.  One may readily check that $N$ is Kunz and has an outer Betti element $z$ above 1 and 2 with $\overline z = 3$, but applying Theorem~\ref{t:cutandpaste} to $z$ yields the modular nilsemigroup $N$, which is not Kunz.  
\end{remark}


\begin{remark}\label{r:groebnerfans}
Under a different viewpoint, one may realize the fan $\mathcal G(m;A)$ as a portion of the Gr\"obner fan of a certain lattice ideal.  We defer the reader to \cite{grobpoly} for definitions of Gr\"obner bases and Gr\"obner fans, and~\cite{cca} for definition of lattices and lattice ideals.  

Given $m$ and $A$, consider the lattice ideal 
$$I_L = \<x^a - x^b : a - b \in L\> \subseteq \kk[x_1, \ldots, x_k]$$
for the rank $k$, index $m$ lattice
$$L = \{(z_1, \ldots, z_k) \in \ZZ^k : a_1z_1 + \cdots + a_kz_k \in m\ZZ\}.$$
Each face in $\mathcal G(m;A)$ is a face of the Gr\"obner fan of $I_L$.  Indeed, by \cite[Corollary~7.29]{cca} and the discussion preceding it, the initial ideal corresponding to each chamber $F$ of the Gr\"obner fan of $I_L$ contains precisely the monomials whose exponent vectors are not ``optimal'' with respect to a vector $x \in F^\circ$; upon unraveling definitions, this result is encoded in Proposition~\ref{p:circleoflightsinverse}.  

Gr\"obner walks, as they are called~\cite{gfanpaper}, allow one to compute the Gr\"obner fan of a given polynomial ideal in a similar fashion to the algorithm at the start of this section.  The manuscript~\cite{genericgroebnerwalk} discusses a Gr\"obner walk for a general ideal $I$, and~\cite{toricgroebnerwalk} concerns the special case where $I$ is toric (i.e., $I$ is the lattice ideal of a saturated lattice).  Note that the lattice ideal $I_L$ above is not toric, so the results in~\cite{toricgroebnerwalk} cannot be directly applied in our setting, though some of our results have analogs in~\cite{toricgroebnerwalk}.  For instance, the necessary condition in Remark~\ref{r:cutandpaste} is reminiscent of \cite[Theorem~3.6]{toricgroebnerwalk}.  Additionally, the lattice $L$ defined above has the added property that each coset of $L$ in $\ZZ^k$ naturally corresponds to an element of $\ZZ_m$, which provides the foundation of the nilsemigroup viewpoint used throughout this paper.  
\end{remark}

\section{Examples and applications}
\label{sec:applications}

\subsection{Applications to open problems}
\label{subsec:applications}

We begin with a proof of \cite[Conjecture~7.3]{minprescard}.  Following the notation from~\cite{minprescard}, given a numerical semigroup $S$, we write $\eta(S) = |\rho|$, where $\rho$ is any minimal presentation of $S$.  

\begin{defn}\label{d:refinement}
Suppose $N$ and $N'$ are Kunz nilsemigroups.  We say $N$ is a \emph{refinement} of $N'$ if for each non-nil $i \in N$, we have $\mathsf Z_{N'}(i) \subseteq \mathsf Z_N(i)$.  
\end{defn}

\begin{thm}\label{t:nablastaircasebound}
Any Kunz nilsemigroup $N$ is a refinement of some staircase Kunz nilsemigroup $N'$ with identical atom set.  Moreover, if $N$ and $N'$ have $\beta$ and $\beta'$ outer Betti elements, respectively, and $N$ has minimal presentation $\rho$, then $|\rho| + \beta \le \beta'$.  In~particular, if $S$ and $S'$ are numerical semigroups whose Kunz nilsemigroups are $N$ and $N'$, respectively, then $\eta(S) \le \eta(S'$).  
\end{thm}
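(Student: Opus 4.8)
The plan is to produce the staircase refinement $N'$ by a geometric argument, then prove the numerical inequality $|\rho|+\beta\le\beta'$; the ``in particular'' clause follows at once from Theorem~\ref{t:minprescard}, since a staircase nilsemigroup has unique factorizations, hence no non-nil Betti elements and empty minimal presentation, so that $\eta(S')=\beta'$ while $\eta(S)=|\rho|+\beta$.

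\emph{Existence of $N'$.} Since $N$ has atom set $A$, the face $F\subseteq\cC_m$ with Kunz nilsemigroup $N$ lies in $\mathcal F(m;A)$, so $F_N=p(F)$ is a cone of the fan $\mathcal G(m;A)$. By Theorem~\ref{t:purefan} this fan is pure, so $F_N$ lies in some chamber, which by Corollary~\ref{c:chamberstaircase} equals $F_{N'}$ for a staircase Kunz nilsemigroup $N'$; applying Lemma~\ref{l:interioratomsets} at a point of the nonempty cone $F_{N'}^\circ\subseteq C(m;A)^\circ$ shows $N'$ has atom set $A$. Let $F'\subseteq\cC_m$ be the face with $p(F')=F_{N'}$. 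Proposition~\ref{p:circleoflightsinverse} --- injectivity of $p$ together with the fact that the composite $q\circ p$ is the identity on faces --- then gives $F\subseteq F'$. Finally, $F\subseteq F'$ forces every coordinate equality $x_a+x_b=x_{a+b}$ valid on $F'$ to be valid on $F$, so by the description of $\oplus$ in Theorem~\ref{t:kunznilsemigroupface} any factorization evaluating non-nil in $N'$ evaluates, to the same element, non-nil in $N$; hence $\mathsf Z_{N'}(i)\subseteq\mathsf Z_N(i)$ for every non-nil $i$, i.e.\ $N$ refines $N'$, with common atom set $A$.

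\emph{The inequality.} By the above $F_N$ is a face of the chamber $F_{N'}$ (fan axiom), so $F_N=F_{N'}\cap H_N$; and $\beta'$ equals the number of Betti inequalities defining $F_{N'}$, one per outer Betti element of $N'$, these being exactly the minimal elements of $\mathsf Z_{N'}(\infty)$ since $N'$ is staircase. Because $N$ refines $N'$, one has $\mathsf Z_N(\infty)\subseteq\mathsf Z_{N'}(\infty)$; moreover every outer Betti element of $N$ is a connected subset of the minimal elements of $\mathsf Z_N(\infty)$, since an element of such a subset that were not minimal would, by condition (i) of the definition, admit a non-nil subfactorization, a contradiction. The plan is to exhibit an injection from the disjoint union of (the trades in a minimal presentation $\rho$ of $N$) and (the outer Betti elements of $N$) into (the outer Betti elements of $N'$). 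A trade is attached to a non-nil Betti element $p$ of $N$; the connected components of $\nabla_p$ besides the one containing the unique $N'$-factorization of $p$ each contain a factorization that was a (now ``promoted'') outer Betti element of $N'$, and distinct components force distinct such elements by Proposition~\ref{p:reduandantbettis}(a) (two outer Betti elements of a staircase nilsemigroup over the same element have disjoint support). An outer Betti element $B$ of $N$ dominates a minimal element of $\mathsf Z_{N'}(\infty)$; using that $B\subseteq\mathsf Z_N(\infty)$ and Theorem~\ref{t:outerbettihdesc}, one arranges this assignment to be injective, with image disjoint from the one on trades. Summing, $|\rho|+\beta\le\beta'$. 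Alternatively one may induct on the codimension of $F_N$ in $F_{N'}$, using Theorem~\ref{t:cutandpaste} to describe a single codimension-one step: it adjoins one inner Betti element $p$ with $\mathsf Z(p)=\{z,z'\}$, and executing the trade $z\sim z'$ raises $|\rho|$ by at most $1$ and lowers $\beta$ by at least $1$.

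\emph{The main obstacle} is precisely this bookkeeping: merging two factorization classes --- crossing a single facet of a chamber of $\mathcal G(m;A)$ --- can simultaneously destroy several outer Betti elements of the coarser nilsemigroup, create new inner Betti structure, and produce new outer Betti elements of the finer one, so one must verify that the net change in $|\rho|+\beta$ is never positive, equivalently that the assignments above are genuinely injective with disjoint images. The connectedness and minimality conditions in the definition of an outer Betti element, together with Proposition~\ref{p:reduandantbettis}, are what carry the weight here.
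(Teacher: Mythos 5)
Your overall strategy is the same as the paper's: obtain $N'$ from the purity of $\mathcal G(m;A)$ (Theorem~\ref{t:purefan} and Corollary~\ref{c:chamberstaircase}), deduce the refinement, and then try to inject the trades of $\rho$ together with the outer Betti elements of $N$ into the outer Betti elements of $N'$, finishing with Theorem~\ref{t:minprescard}. The existence-of-$N'$ argument, the refinement argument, and the ``in particular'' deduction are all correct and carefully written.

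However, there is a genuine gap, which you yourself flag in your last paragraph: the injection is never actually constructed. You assert that each connected component $Z$ of $\nabla_p$ not containing the $N'$-factorization ``contains a factorization that was a (now `promoted') outer Betti element of $N'$,'' and that each outer Betti element $B$ of $N$ ``dominates a minimal element of $\mathsf Z_{N'}(\infty)$,'' but neither claim is justified: a priori a minimal element of $\mathsf Z_{N'}(\infty)$ lying below some $z'\in Z$ need not itself lie in $Z$ (or in $B$), so the assignment you want is not well-defined as stated. The device that resolves this — and is what the paper uses — is to fix $y\in (F')^\circ$ once and for all, and in each such set ($B$, respectively $Z$) choose the element $z$ minimizing the dot product $z\cdot p(y)$. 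One then verifies, using condition (i) in the definition of an outer Betti element together with connectedness, that for each $i\in\supp(z)$ the element $z-e_i$ also minimizes the dot product over $\mathsf Z_N(q)$ for the appropriate non-nil $q$, hence $\mathsf Z_{N'}(q)=\{z-e_i\}$, which is exactly what is needed to conclude that $\{z\}$ is an outer Betti element of $N'$. Distinctness of the images then comes essentially for free (the chosen elements lie in pairwise disjoint subsets of $\mathsf Z_{N'}(\infty)$, and the trade-derived ones lie in $\mathsf Z_N(i)$ for non-nil $i$ whereas the $B$-derived ones lie in $\mathsf Z_N(\infty)$), so your invocation of Proposition~\ref{p:reduandantbettis}(a) is not what does the work. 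Also, a minor imprecision: $\beta'$ need not equal the number of facet-defining inequalities of $F_{N'}$ — some Betti inequalities may be redundant — though you do not actually use that claim. Your suggested alternative, inducting on codimension via Theorem~\ref{t:cutandpaste}, is plausible but, as you note, requires the same uncarried bookkeeping and is not executed either.
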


\begin{proof}
Let $m = |N|$, and let $F \subseteq \cC_m$ denote the face whose Kunz nilsemigroup is $N$.  By Theorem~\ref{t:purefan}, $\mathcal G(m;A)$ is pure, so there exists $F' \subseteq \cC_m$ with $F \subseteq F'$ whose Kunz nilsemigroup $N'$ is staircase.  This means $N$ is a refinement of $N'$, and moreover, if we write $\mathsf Z_{N'}(i) = \{z\}$, then for any $y \in (F')^\circ$, $z$ is the element of $\mathsf Z_N(i)$ whose dot product with $p(y)$ is minimal.  

Now, let $B$ be an outer Betti element of $N$.  Fix $y \in (F')^\circ$, and let $z \in B$ denote the element of $B$ minimizing $z \cdot p(y)$.  We claim $\{z\}$ is an outer Betti element of $N'$.  Indeed, for each $i \in \supp(z)$, we have $(z - e_i) \in B - e_i = \mathsf Z_N(p)$ for some $p \in N$.  By~the minimality of $z$, $z - e_i$ has minimal dot product with $p(y)$ among elements of $\mathsf Z_N(p)$, so $\mathsf Z_{N'}(p) = \{z - e_i\}$.  This proves the claim.  

Next, fix a Betti element $i \in N$.  Let $\mathsf Z_{N'}(i) = \{z\}$, and let $Z \subseteq \mathsf Z_N(i)$ denote a connected component of $\nabla_i$ not containing $z$.  By an identical argument to the preceding paragraph, $\{z'\}$ is an outer Betti element of $N'$ for some $z' \in Z$.  Moreover, $z'$~cannot lie in any outer Betti element of $N$ since it lies in $\mathsf Z_N(i)$.  

We now conclude the desired inequalities hold by Theorem~\ref{t:minprescard}.  
\end{proof}

\begin{remark}\label{r:nablastaircasebound}
It remains an interesting open question to bound $\eta(S)$ in terms of $\mathsf m(S)$ and $\mathsf e(S)$.  This was posed in \cite[Conjecture~7.2]{minprescard}, and recent progress and a survey can be found in~\cite{nssyzygybounds,moscariellominpres,openproblemsrelations}.  Theorem~\ref{t:nablastaircasebound} provides an avenue for further headway.  
\end{remark}

A numerical semigroup $S$ with 4 generators can have $\eta(S)$ arbitrarily large; see~\cite{nsbettisurvey} for examples and references.  
As such, if $k = 3$, there is no upper bound to the number of outer Betti elements a staircase Kunz nilsemigroup $N$ can have.  However, as Corollary~\ref{c:facetbound} indicates, $F_N$~can have at most 6 facets in this case.  

\begin{thm}\label{t:facetbound}
If $N$ is a staircase Kunz nilsemigroup, $k = 3$, and $z, z' \in \ZZ_{\geq 0}^3$ are outer Betti elements of $N$ with $\supp(z) = \supp(z') = \{1,2\}$, then $z$ or $z'$ is redundant.
\end{thm}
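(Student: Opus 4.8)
The plan has three parts: reduce to a ``core'' configuration, pin down the witness factorizations of the two outer Betti elements with a short structural argument, and then show that one of the two resulting inequalities is implied by the others.

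\textbf{Reductions.} Since $N$ is staircase it has no inner Betti elements, so $H_N=\RR^3$ and $F_N$ is a full-dimensional cone in $\RR^3$ (Corollary~\ref{c:chamberstaircase}). If $\overline z=0$, then $z$ is redundant by Proposition~\ref{p:reduandantbettis}(b), because $\supp(z)$ has two elements; likewise for $z'$, so assume $\overline z,\overline{z'}\ne 0$. Applying Proposition~\ref{p:reduandantbettis}(a) to the common support $\{1,2\}$ gives $\overline z\ne\overline{z'}$. Finally, we may assume $z\ne z'$; since for a staircase nilsemigroup an outer Betti element is precisely a minimal element of $\mathsf Z_N(\infty)$ under the componentwise order, $z$ and $z'$ are incomparable.

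\textbf{Structural step.} I claim that if $b\in\mathsf Z_N(\infty)$ is an outer Betti element with $\supp(b)=\{1,2\}$ and $\overline b\ne 0$, then the unique factorization $a=\mathsf Z_N(\overline b)$ has $a_1=a_2=0$. Indeed, if $a_1\ge 1$, then $a-e_1$ is a valid factorization of the non-nil element $\overline b-p_1$ (if $\varphi_N(a-e_1)=\infty$ then $\varphi_N(a)=\infty$), while minimality of $b$ forces $b-e_1\notin\mathsf Z_N(\infty)$, hence $b-e_1=\mathsf Z_N(\overline b-p_1)$; uniqueness gives $a-e_1=b-e_1$, so $a=b$, contradicting $\varphi_N(b)=\infty$. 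The same argument gives $a_2=0$. Thus $a=(0,0,a_3)$ with $a_3\ge 1$, $\overline b=a_3p_3$ in $\ZZ_m$, and the Betti inequality of $b$ reads $b_1x_1+b_2x_2\ge a_3x_3$. Applying this to $z$ and $z'$ yields inequalities $z_1x_1+z_2x_2\ge a_3x_3$ and $z_1'x_1+z_2'x_2\ge a_3'x_3$ with $a_3,a_3'\ge 1$ and, since $\overline z\ne\overline{z'}$, with $a_3\ne a_3'$. After relabelling $z\leftrightarrow z'$ (which preserves all of the above, including incomparability) we may assume $a_3<a_3'$, so $a_3'\ge 2$.

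\textbf{The crux.} It remains to show $z'$ is redundant, i.e.\ that $z_1'x_1+z_2'x_2\ge a_3'x_3$ is a nonnegative combination of $x_1,x_2,x_3\ge 0$, of $z_1x_1+z_2x_2\ge a_3x_3$, and of the remaining Betti inequalities of $F_N$. (A naive Farkas computation shows the first two do not suffice, precisely because $z$ and $z'$ are incomparable, so a further constraint must be brought in.) Here I would use that $\mathsf Z_N(jp_3)=(0,0,j)$ for $0\le j\le a_3'$, so the first power $(0,0,N_0)$ of $p_3$ that is nil is an outer Betti element supported on $\{3\}$; the structural step gives $\mathsf Z_N(N_0p_3)=(b_1,b_2,0)$, so $F_N$ also satisfies $N_0x_3\ge b_1x_1+b_2x_2$. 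Combining this with the minimality of $z'$ in $\mathsf Z_N(\infty)$ — which forces the proper componentwise predecessors of $z'$ supported on $\{1,2\}$ to be pairwise distinct non-nil elements, bounding $z_1',z_2'$ against $N_0,b_1,b_2$ — should exhibit $z_1'x_1+z_2'x_2-a_3'x_3$ in the required form. The main obstacle is exactly this bookkeeping: controlling $b_1,b_2,N_0$ and their relation to $a_3,a_3',z_i,z_i'$. I expect the cleanest route is to run the cut-and-paste of Theorem~\ref{t:cutandpaste} at the facet cut out by $z'$ — which, since $a_3'\ge 2$ gives $\overline{z'}\notin A\cup\{0\}$, is not on the boundary of $C(m;A)$ by Proposition~\ref{p:fanboundary} — and to show that crossing the hyperplane $z_1'x_1+z_2'x_2=a_3'x_3$ cannot produce a staircase Kunz nilsemigroup without first crossing $z_1x_1+z_2x_2=a_3x_3$, contradicting the assumption that both $z$ and $z'$ are irredundant.
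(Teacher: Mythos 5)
Your reductions and the structural step are correct and match observations the paper makes implicitly: since $N$ is staircase, $\overline z,\overline{z'}\neq 0$ may be assumed, $\overline z\neq\overline{z'}$, $z$ and $z'$ are incomparable minimal elements of $\mathsf Z_N(\infty)$, and the unique factorizations of $\overline z$ and $\overline{z'}$ are $(0,0,z_3)$ and $(0,0,z_3')$ supported only on the third coordinate. But from that point the argument is not completed, and you acknowledge this yourself (``should exhibit \dots in the required form,'' ``the main obstacle is exactly this bookkeeping,'' ``I expect the cleanest route is\dots''). That is a genuine gap, not a cosmetic one: everything before the crux is exactly the setup, and neither of the two routes you sketch — a Farkas combination involving the $\{3\}$-supported outer Betti element $(0,0,N_0)$, or a cut-and-paste contradiction via Theorem~\ref{t:cutandpaste} — is carried out, nor is it the route the paper takes.

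The paper's proof normalizes differently and this matters. It uses all the freedom available — swapping the labels $1\leftrightarrow 2$ as well as $z\leftrightarrow z'$ — to arrange $z_1>z_1'$, $z_2<z_2'$ \emph{and} $z_3>z_3'$ simultaneously. This richer normalization is what makes the argument run: writing $w=(z_1,z_2,-z_3)$, $w'=(z_1',z_2',-z_3')$, the difference $v=w'-w$ then has $v^-=(z_1-z_1')e_1$ supported on a single coordinate while $v^+=(0,z_2'-z_2,z_3-z_3')$ is supported on $\{2,3\}$, and one can locate an outer Betti element $b$ with $\supp(b)=\{2,3\}$ inside $v^+$. The paper iterates, peeling off such $\{2,3\}$-supported Betti inequalities (with $\mathsf Z_N(\overline b)=\{b_1e_1\}$ for each), until the leftover $\{1\}$-supported piece is nonnegative, at which point Farkas closes the argument. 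Your sketch instead fixes only $a_3<a_3'$ and reaches for the $\{3\}$-supported outer Betti element, neither of which appears in the paper's argument; in particular it is not clear that the one with larger $a_3'$ is always the redundant one, whereas the paper's three-inequality normalization makes the choice of which element to eliminate unambiguous and is precisely what produces the sign pattern needed for the decomposition. If you want to salvage the Farkas route, the missing idea is to decompose $w'-w$ using the full normalization, and then to iterate with outer Betti elements supported on $\{2,3\}$ rather than on $\{3\}$ alone.
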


\begin{proof}
Let $z = (z_1,z_2,0)$, $z' = (z_1',z_2',0)$, $(0,0,z_3) \in \mathsf{Z}_N(\overline{z})$, and $(0,0,z_3') \in \mathsf{Z}_N(\overline{z'})$.  
After relabeling as necessary, we may assume 
\[ 
z_1 > z_1',
\qquad
z_2< z_2',
\qquad \text{and} \qquad
z_3>z_3'.
\]
We will show that $z'$ is redundant. Let $w = (z_1,z_2,-z_3)$, $w' = (z_1',z_2',-z_3')$, and
\[
v = w' - w = (z_1' - z_1, z_2' - z_2, z_3 - z_3').
\]
By the previous inequalities, we can decompose $v = v^+ - v^-$ where
\[
v^+ = (0, z_2' - z_2, z_3 - z_3') \in \ZZ_{\ge 0}^3
\qquad \text{and} \qquad
v^- = (z_1 - z_1', 0, 0) \in \ZZ_{\ge 0}^3.
\]
Now, since $z$ is a factorization of a non-nil element of $N$, so is $z_1e_1$, and thus 
\[
\mathsf{Z}_N(\overline{v^+}) = \{v^-\}
\qquad \text{and} \qquad
v^+ \in \mathsf{Z}_N(\infty)
\]
since $\overline{v^+} = \overline{v^-}$.  
This means $v^+ = b + \ell$, where $b$ is an outer Betti element and $\ell \in \ZZ_{\geq 0}^3$.  Clearly $\supp(b) \subseteq \{2,3\}$, and since $v_2^+e_2$ and $v_3^+ e_3$ are factorizations of non-nil elements of $N$, we in fact have $\supp(b) = \{2,3\}$.  As such, $\mathsf{Z}_N(\overline{b}) = \{b_1e_1\}$ for some $b_1 \ge 0$.  Thus,
\[
w'
= w + v^+ - v^-
= w + (-b_1, b_2, b_3) + \ell + (b_1e_1 - v^-).
\]
If $b_1e_1 - v^- \in \ZZ_{\geq 0}^3$, then we are done by the Farkas lemma since $F_N \subseteq \RR_{\ge 0}^3$.  As such, suppose $z_1 - z_1' - b_1 > 0$.  Computing equivalence classes modulo $m$, we have 
\[
(z_1 - z_1 - b_1)p_1
= \overline{v^-} - \overline b
= \overline{v^+} - \overline b
= \overline \ell,
\]
and since $(z_1-z_1 - b_1)e_1$ is a factorization of a non-nil element of $N$, we have $\ell \in \mathsf{Z}_N(\infty)$.  Proceeding as above, write $\ell = b' + \ell'$, where $b'$ is an outer Betti element and $\ell' \in \ZZ_{\geq 0}^3$.  By the same argument as above, $\supp(b') = \{2,3\}$ and $\mathsf{Z}_N(\overline{b'}) = \{b_1'e_1\}$, so
\[
w' = w + (-b_1, b_2, b_3) + (-b_1', b_2', b_3') + \ell' + (b_1e_1 + b_1'e_1 - v^-).
\]
We can continue applying this argument until the rightmost parenthetical lies in $\ZZ_{\ge 0}^3$, and this process will indeed terminate since $b_2, b_3, b_2', b_3', \ldots > 0$.  
\end{proof}

\begin{cor}\label{c:facetbound}
If $k = 3$, then no two irredundant outer Betti elements of a staircase Kunz nilsemigroup $N$ have identical support.  In particular, $F_N$ has at most 6 facets.  
\end{cor}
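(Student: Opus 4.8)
The plan is to bound the number of irredundant outer Betti elements of $N$ by classifying them according to their support (a nonempty subset of $\{1,2,3\}$), and then to translate this into the stated bound on the number of facets of $F_N$ via Theorem~\ref{t:outerbettihdesc}. Concretely, I will show: for each of the three singleton supports there is at most one outer Betti element; for each of the three two-element supports there is at most one \emph{irredundant} outer Betti element; and for the full support $\{1,2,3\}$ there is no irredundant outer Betti element. Summing over the seven nonempty subsets of $\{1,2,3\}$ then yields at most $3+3+0 = 6$ irredundant outer Betti elements.

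For a singleton support $\{i\}$, suppose $b = c\,e_i$ and $b' = c'\,e_i$ are outer Betti elements with $c < c'$. Both lie in $\mathsf Z_N(\infty)$, so $c\,p_i = c'\,p_i = \infty$ in $N$; on the other hand, condition~(i) in the definition of an outer Betti element forces $(c'-1)e_i = b'-e_i \in \mathsf Z_N(q)$ for some non-nil $q$, so $(c'-1)p_i \neq \infty$. Since $c \leq c'-1$, however, $(c'-1)p_i$ is a sum of $c\,p_i = \infty$ with further atoms and hence equals $\infty$ because $\infty$ is nil, a contradiction; thus a singleton-support outer Betti element is unique when it exists. The two-element case with support $\{1,2\}$ is exactly Theorem~\ref{t:facetbound}, and since the atom set $A$ is unordered, permuting indices gives the same conclusion for supports $\{1,3\}$ and $\{2,3\}$. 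Finally, Proposition~\ref{p:reduandantbettis}(c) says there is at most one outer Betti element with full support and that it is redundant when $|A|>1$; as $|A| = k = 3$, this contributes no irredundant outer Betti element. This proves the first claim of the corollary and gives the count of at most $6$.

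To deduce the bound on facets, recall that $N$ is a staircase Kunz nilsemigroup, so by Corollary~\ref{c:chamberstaircase} the face $F \subseteq \cC_m$ with Kunz nilsemigroup $N$ is a chamber, whence $\dim F = k = 3$; moreover a staircase nilsemigroup has no Betti equalities, so $H_N = \RR^3$. By Theorem~\ref{t:outerbettihdesc}, $F_N = p(F)$ is therefore a full-dimensional cone in $\RR^3$ cut out precisely by the Betti inequalities associated to the outer Betti elements of $N$. Consequently every facet of $F_N$ is supported by the Betti inequality of some outer Betti element, which is then irredundant by Definition~\ref{d:irredundantinequality}; since one inequality supports at most one facet, the number of facets of $F_N$ is at most the number of irredundant outer Betti elements, hence at most $6$.

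I do not expect a genuine obstacle, since the substantive work is packaged into Theorem~\ref{t:facetbound} and Proposition~\ref{p:reduandantbettis}. The two points needing care are the singleton-support case, which is not covered by Proposition~\ref{p:reduandantbettis}(b) (that statement only addresses $\overline b = 0$) and so needs the short direct argument above, and the final passage from irredundant outer Betti elements to facets, which relies on $F_N$ being full-dimensional (so that its minimal $H$-description consists exactly of facet inequalities) and on $F_N$ being defined solely by Betti inequalities --- both guaranteed by the staircase hypothesis via Corollary~\ref{c:chamberstaircase}, Definition~\ref{d:modularnilsemigroup}, and Theorem~\ref{t:outerbettihdesc}.
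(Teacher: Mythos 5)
Your proof is correct and takes essentially the same route as the paper: classify irredundant outer Betti elements by support, dispose of full support via Proposition~\ref{p:reduandantbettis}(c), handle the three two-element supports via Theorem~\ref{t:facetbound} and symmetry, and bound singleton supports by one each. The paper asserts the singleton-support uniqueness without proof and is terser about the passage from irredundant outer Betti elements to facets; your short direct argument for the singleton case and your careful justification that $H_N = \RR^3$ and that $F_N$ is full-dimensional (so every facet arises from a Betti inequality) are welcome additions rather than a different approach.
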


\begin{proof}
Any outer Betti element $b$ of $N$ has nonempty support, and if $\supp(b) = \{1,2,3\}$ then $b$ is redundant by Proposition~\ref{p:reduandantbettis}.  Additionally, there are exactly three outer Betti elements of $N$ with singleton support, none of which coincide, and the remaining cases are handled by Theorem~\ref{t:facetbound} after appropriate permutation of indices.  
\end{proof}

\begin{remark}\label{r:latticefacetbound}
Resuming notation and terminology from Remark~\ref{r:groebnerfans}, if $L \subseteq \ZZ^3$ is any lattice with $\ZZ^3/L \cong \ZZ_m$, then Corollary~\ref{c:facetbound} implies each chamber of the Gr\"obner fan of the lattice ideal $I_L$ has at most 6 facets.  
Indeed, choosing $(v_1, v_2, v_3) \in \ZZ^3$ whose image generates $\ZZ^3/L$, we have
$$L = \{(x_1, x_2, x_3) \in \ZZ^3 : v_1x_1 + v_2x_2 + v_3x_3 \in m\ZZ\}.$$
As such, each chamber $F$ of the Gr\"obner fan of $I_L$, there are two possibilities:
\begin{itemize}
\item 
$F \in \mathcal G(m; v_1,v_2,v_3)$, and thus has at most 6 facets by Corollary~\ref{c:facetbound}; or

\item 
the initial ideal of $I_L$ corresponding to $F$ contains a variable, and thus has at most 4 minimal generators, since its staircase has at most one minimal generator with non-singleton support by Proposition~\ref{p:reduandantbettis}.  

\end{itemize}
According to \cite[Conjecture~6.1]{groenberfanchamberfacets}, if $L$ is a saturated $d$-dimensional lattice, then there exists a bound, in terms of $d$, on the number of facets of any chamber of the Gr\"obner fan of the toric ideal $I_L$.  Though some progress has been made on this conjecture~\cite{supernormalconfigs}, a~proof remains elusive, even in the case $d = 3$.  Given the above conclusions, we pose the following generlization of this question.  
\end{remark}

\begin{question}\label{q:latticefacetbound}
Does there exist a function $\phi:\ZZ \to \ZZ$ such that for any $d$-dimensional lattice $L$, each chamber of the Gr\"obner fan of $I_L$ has at most $\phi(d)$ facets?  
\end{question}

We state the following conjecture, which has been verified computations for $m \le 42$, albeit with some reservation:\ \cite[Conjecture~6.2]{groenberfanchamberfacets} claimed the same was true for the chambers in the Gr\"obner fan of any toric ideal defined by a 3-dimensional lattice, but a counterexample was located soon thereafter~\cite{vertexideal}.  

\begin{conj}\label{conj:4facets}
If $k = 3$, and $N$ is staircase and Kunz, then $F_N$ has at most $4$ facets.
\end{conj}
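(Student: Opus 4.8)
The plan is to reduce Conjecture~\ref{conj:4facets} to a single ``cross-support redundancy'' statement, and then to attack that statement by an iterated Farkas-type decomposition in the spirit of the proof of Theorem~\ref{t:facetbound}. Recall from the proof of Corollary~\ref{c:facetbound} that, for $k = 3$, a staircase Kunz nilsemigroup $N$ has at most one irredundant outer Betti element of each of the supports $\{1\}, \{2\}, \{3\}, \{1,2\}, \{1,3\}, \{2,3\}$ (Propositions~\ref{p:reduandantbettis}(a),(c) and Theorem~\ref{t:facetbound} eliminate every other possibility), so $F_N$ has at most six facets. To sharpen this to four, I claim it is enough to prove the following analogue of Theorem~\ref{t:facetbound} in which the two outer Betti elements have \emph{distinct} rather than equal support: if $N$ is staircase and Kunz with $k = 3$, and $u, v \in \ZZ_{\ge 0}^3$ are outer Betti elements with $\supp(u) = \{1,2\}$ and $\supp(v) = \{1,3\}$, then the (unique) outer Betti element of singleton support $\{1\}$ is redundant.

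Granting this ``cross-support lemma'', the conjecture follows by a short count on how many of the three pair-support outer Betti elements of $N$ are irredundant. If all three are irredundant, then applying the lemma to each of the three pairs (after permuting indices) shows that all three singleton-support outer Betti elements are redundant, so $F_N$ has at most three facets. If exactly two are irredundant, then they share an index $j$, so the $\{j\}$-singleton is redundant and $F_N$ has at most $2 + 2 = 4$ facets. If at most one pair-support outer Betti element is irredundant, then $F_N$ has at most $1 + 3 = 4$ facets. Thus essentially all of the content is in the cross-support lemma, which stands to Theorem~\ref{t:facetbound} in the same way that handling two neighbouring chambers does to handling two copies of a single support.

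For the cross-support lemma itself, I would imitate the ``walk'' in the proof of Theorem~\ref{t:facetbound}, but now spending the Betti inequalities of $u$ and of $v$ as currency rather than only the coordinate inequalities $x_i \ge 0$. Writing $\mathsf Z_N(\overline{c e_1}) = \{a\}$ for the singleton outer Betti element $c e_1$ (a singleton set since $N$ is staircase, and with $\supp(a) \subseteq \{2,3\}$ by the same argument that shows $\supp(\mathsf Z_N(\overline u)) \subseteq \{3\}$), redundancy of $c e_1$ amounts, via Farkas and the fact that $F_N$ is full-dimensional in $\RR^3$, to writing $(c,0,0) - a$ as a nonnegative combination of the normals of the remaining facets, i.e.\ of $u - a_u$, $v - a_v$ (with $a_u \in \mathsf Z_N(\overline u)$, $a_v \in \mathsf Z_N(\overline v)$), the other two singleton normals, and $e_1, e_2, e_3$. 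Starting from $c e_1 \in \mathsf Z_N(\infty)$ and repeatedly invoking Lemma~\ref{l:locateouterbetti} together with Proposition~\ref{p:innerfactsminimal} at a point of $F_N^\circ$ satisfying all Betti inequalities strictly, one peels off an outer Betti factorization at each step; the new phenomenon is that peeling off a copy of $u$ incurs a ``debt'' in the $e_2$-direction while peeling off $v$ incurs a debt in the $e_3$-direction, and one must show this debt is always dischargeable and that the process halts.

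The termination and book-keeping here is the step I expect to be the main obstacle: in Theorem~\ref{t:facetbound} the debt lived on a single coordinate axis and decreased monotonically because the relevant outer Betti elements had positive $e_2$- and $e_3$-entries, supplying a clean monovariant, whereas now the debt is two-dimensional and one needs a potential function --- the natural candidate being $a' \cdot x$, where $a'$ is the running remainder and $x \in F_N^\circ$ is fixed --- that provably drops by a bounded-below amount at every step no matter which of $u$ or $v$ is invoked. I also expect the degenerate case $\overline{c e_1} = 0$ to need separate treatment via Proposition~\ref{p:reduandantbettis}(b): there one must show the existence of $u$ and $v$ forces $p_1, p_2, p_3$ to coincide modulo $\gcd(p_1, m)$, so that $c e_1$ is redundant for that reason instead. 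Finally, since the toric analogue \cite[Conjecture~6.2]{groenberfanchamberfacets} is \emph{false} \cite{vertexideal}, there is a genuine possibility that the cross-support lemma fails; the fallback would be to refine the case analysis --- for instance, tracking which of the two pair-support elements is ``outermost'' along the relevant hyperplane and showing that one, rather than the singleton, is the redundant facet in any exceptional configurations --- using the verified data up to $m = 42$ to locate the correct formulation. An alternative line of attack worth keeping in reserve is a direct argument on the cross-section polygon of the $3$-dimensional cone $F_N$, or importing tools from the lattice-ideal literature via the identification in Remark~\ref{r:latticefacetbound}.
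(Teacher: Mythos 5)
The statement you are attempting is, in the paper, Conjecture~\ref{conj:4facets}: the authors leave it open and report only computational verification for $m \le 42$. So there is no paper proof to compare against, and your proposal does not close the gap either---it reduces the conjecture to an unproven ``cross-support lemma'' which you yourself flag as the crux and as possibly false. Concretely: your counting argument from the lemma to the four-facet bound is sound (with the irredundant outer Betti elements partitioned by support into three singleton types and three pair types, having irredundant pair-support elements on $\{i,j\}$ and $\{i,k\}$ would kill the $\{i\}$-singleton facet, so any configuration with five or more facets is ruled out), but that argument is only as strong as the lemma, and the lemma is exactly where the difficulty lives. The iterated Farkas walk you sketch runs into the two-dimensional debt problem you identify: unlike in the proof of Theorem~\ref{t:facetbound}, where every step strictly decreases a single coordinate and termination is immediate, here the remainder can oscillate between the $e_2$- and $e_3$-directions, and the proposed potential $a' \cdot x$ does not obviously drop by a uniform positive amount---each step of Lemma~\ref{l:locateouterbetti} only gives a lower bound of $\min\{x_1,\dots,x_k\}$ on the decrease when the outer Betti factorization being peeled is nontrivial, but you must also rule out cycling among trades with the same value. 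You correctly note that the disproof of \cite[Conjecture~6.2]{groenberfanchamberfacets} in \cite{vertexideal} means one cannot simply wave one's hands here; any proof must use the non-saturation of the lattice $L$ in an essential way, and your sketch does not yet isolate where that enters.

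Two smaller technical points to tighten if you pursue this. First, the cross-support lemma as stated asserts redundancy of the singleton $\{1\}$ outer Betti element assuming only that pair-support outer Betti elements $u$ on $\{1,2\}$ and $v$ on $\{1,3\}$ \emph{exist}; but since Proposition~\ref{p:reduandantbettis}(a) allows several outer Betti elements of the same support (with distinct $\overline{b}$), you should say explicitly which $u$ and $v$ you mean (e.g.\ those supporting facets) or verify that the argument is insensitive to the choice. Second, your degenerate case $\overline{c e_1} = 0$ cannot be dispatched by Proposition~\ref{p:reduandantbettis}(b) alone as written: that result characterizes when a $\overline{b} = 0$ singleton is \emph{irredundant}, so you would need to show the existence of $u$ and $v$ is incompatible with the criterion there (distinct residues mod $\gcd(p_1,m)$), which is a separate arithmetic argument you have not supplied.
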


\subsection{Embedding Dimension 3}
\label{subsec:embed3}

Throughout this subsection, let $k = 2$.  
In what follows, we characterize the faces of $\cC_m$ containing embedding dimension $3$ numerical semigroups by classifying the possible ``shapes'' of the Kunz nilsemigroup $N$ of such a face.  
Any embedding dimension~3 numerical semigroup is either complete intersection with 2 minimal trades, or not complete intersection with 3 minimal trades; for more on this dichotomy, see~\cite[Chapter~10]{numerical}.  As we will show, the Kunz nilsemigroup $N$ comes in 3 varieties; 2 in the former category, and 1 in the latter category.  

We begin with the case where $N$ is staircase.  By Proposition~\ref{p:reduandantbettis}, $N$ has either 2 or~3 outer Betti elements since at most one can have full support.  
As such, the Kunz poset of $N$ can have one of two staircase ``shapes''.  
\begin{itemize}
\item 
If $N$ has 2 outer Betti elements $(a,0)$ and $(0,c)$, then $m = ac$ and its Kunz poset forms an $a \times c$ diamond.  In this case, we say $(a,c) \in \ZZ_{\ge 2}^2$ is the \emph{shape} of $N$.  Numerical semigroups with Kunz nilsemigroup $N$ are complete intersection.  

\item 
If $N$ has 3 outer Betti elements, then its Kunz poset forms a ``V'' with full support outer Betti element $(a,c)$, and its other two outer Betti elements have the form $(a + b, 0)$ and $(0, c + d)$.  In this case, we say $N$ has \emph{shape} $(a, b, c, d) \in \ZZ_{\ge 1}^4$, and 
\[
m = (a+b)(c+d) - bd.
\]
Numerical semigroups with Kunz nilsemigroup $N$ are not complete intersection, but are uniquely presented and possess 3 minimal trades \cite[Chapter~10]{numerical}.  
\end{itemize}

\begin{example}\label{e:2gen}
The three posets from Figure \ref{f:2dfan} are all staircase posets, and their shapes are $(2,0,10,0)$, $(2,2,3,4)$ and $(2,4,3,1)$, respectively.  
\end{example}

The following result implies that there exists a Kunz nilsemigroup $N$ with a given staircase shape if and only if there exists a ``filling'' with the elements of $\ZZ_m$.  
This amounts to choosing $p_1, p_2 \in \ZZ_m$ so as to ``fill'' the given staircase shape.  
One could even visualize the staircase of $N$ as a Young tableaux, wherein each box $(i,j)$ is filled with $p \in \ZZ_m$ if $\mathsf{Z}_N(p) = \{(i,j)\}$.

\begin{prop}\label{p:2genallkunz}
Any modular staircase nilsemigroup $N$ with $2$ atoms is Kunz.
\end{prop}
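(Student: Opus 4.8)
The strategy is to apply Theorem~\ref{t:kunzcondition}. Since a staircase nilsemigroup has no two distinct factorizations of any non-nil element, it has no Betti equalities, so $H_N = \RR^2$; hence it suffices to produce a single $x = (x_1,x_2) \in \RR^2$ satisfying every Betti inequality of $N$ strictly. Write $T = \bigcup_{i \in \ZZ_m} \mathsf Z_N(i) \subseteq \ZZ_{\ge 0}^2$, which is down-closed (because $N$ is a nilsemigroup) and on which the reduction map $\phi\colon (a,b) \mapsto a p_1 + b p_2 \bmod m$ restricts to a bijection onto $\ZZ_m$; recall that the outer Betti elements of $N$ are exactly the minimal elements of $\ZZ_{\ge 0}^2 \setminus T$ under the componentwise order, and that for such an element $z$ one writes $w_z := \mathsf Z_N(\overline z) \in T$ for its unique competing factorization, so that the corresponding Betti inequality is $(z - w_z) \cdot x \ge 0$. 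Because $N$ has two atoms, $(1,0)$ and $(0,1)$ lie in $T$; let $(0,h)$ and $(r,0)$ be the minimal elements of $\ZZ_{\ge 0}^2 \setminus T$ on the two coordinate axes. Then these are the only outer Betti elements with support $\{2\}$ and $\{1\}$ respectively, while every other outer Betti element has both coordinates positive.

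The crux is the following dichotomy, which I would prove by repeatedly exploiting injectivity of $\phi$ on $T$: (a)~if $z$ is an outer Betti element with both coordinates positive, then both coordinates of $z - w_z$ are nonnegative; (b)~$w_{(0,h)} = (\alpha,0)$ for some $0 \le \alpha \le r-1$; and (c)~$w_{(r,0)} = (0,\beta)$ for some $0 \le \beta \le h-1$. In each case one assumes the claim fails and derives from the congruence $\phi(z) = \phi(w_z)$ an equality $\phi(u) = \phi(v)$ with $u \ne v$ both in $T$, contradicting that $\phi|_T$ is injective. For (a), say $z = (z_1,z_2)$ is an interior corner and $w_z = (w_1,w_2)$: if $w_1 > z_1$, then column $w_1$ is no taller than column $z_1$, forcing $w_2 < z_2$, and the congruence yields $\phi(w_1 - z_1,\,0) = \phi(0,\,z_2 - w_2)$; here one must check $(w_1 - z_1,0) \in T$ and $(0,z_2 - w_2) \in T$, the latter using that $z$ is a \emph{minimal} element of the complement, which forces column $0$ to be strictly taller than column $z_1$. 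The case $w_2 > z_2$ is symmetric, and (b), (c) are the same argument carried out in a degenerate column or row. I expect the bookkeeping of column heights and the minimality of $z$ needed to place these auxiliary points in $T$ to be the most delicate part of the proof.

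Granting the dichotomy, the only Betti inequalities of $N$ that are not automatic for $x_1,x_2 > 0$ are $h x_2 \ge \alpha x_1$ (from $(0,h)$) and $r x_1 \ge \beta x_2$ (from $(r,0)$); so it suffices to choose $x_1,x_2 > 0$ with $\beta/r < x_1/x_2 < h/\alpha$ (reading $h/\alpha = +\infty$ when $\alpha = 0$). Such a ratio exists because $\alpha\beta \le (r-1)(h-1) < rh$, and then $x$ satisfies every Betti inequality of $N$ strictly, so $N$ is Kunz by Theorem~\ref{t:kunzcondition}. Note that this argument uses only that $N$ is a staircase with two atoms; in particular it does not invoke Proposition~\ref{p:reduandantbettis} or any structure theory for the shape of $T$.
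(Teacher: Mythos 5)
Your proof is correct, but it takes a genuinely different route from the paper's. The paper first classifies $2$-atom staircase Kunz nilsemigroups into two ``shapes'' --- a rectangular shape $(a,c)$ with two outer Betti elements, or an L-shape $(a,b,c,d)$ with three --- relying on Proposition~\ref{p:reduandantbettis}(c) to rule out staircases with more than one inner corner. It then writes down the two non-trivial Betti inequalities explicitly in each case and checks that the explicit point $(c,a)$ (respectively $(c+d,\,a+b)$) satisfies them strictly, invoking Theorem~\ref{t:kunzcondition} to conclude. You instead work directly with the set $T$ of non-nil factorizations, and use the fact that $\phi\colon T\to\ZZ_m$ is a bijection (the modularity hypothesis, which you invoke via ``injectivity of $\phi|_T$'') to show that \emph{every} full-support outer Betti element $z$ satisfies $z - w_z \ge 0$ componentwise, and that the two axial outer Betti elements have competing factorizations $w_{(0,h)}=(\alpha,0)$ and $w_{(r,0)}=(0,\beta)$ lying on the opposite axis. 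This leaves only two non-automatic strict inequalities, whose simultaneous solvability follows from $\alpha\beta \le (r-1)(h-1) < rh$. Your approach buys two things: it is self-contained (it does not require the shape classification, nor does it need to know that there is at most one inner corner, only that inner corners contribute automatically-satisfied inequalities), and it makes the role of the modularity hypothesis completely explicit --- which is worth noting, since Proposition~\ref{p:reduandantbettis} is stated for staircase \emph{Kunz} nilsemigroups, so invoking its shape consequences here to prove a modular nilsemigroup is Kunz requires at least a remark that the relevant parts only need modularity. One small inaccuracy: your closing sentence claims the argument ``uses only that $N$ is a staircase with two atoms,'' but of course it also uses modularity (bijectivity of $\phi|_T$), as the earlier part of your write-up correctly emphasizes.
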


\begin{proof}
If $N$ has shape $(a,c)$, then $p_1$ has order $a$ or $p_2$ has order $c$, as otherwise B\'ezout's identity yields a full support outer Betti element.  Up to symmetry, assume $p_2$ has order~$c$.  The Betti inequalities thus have the form 
\[
ax_1 \ge kx_2
\qquad \text{and} \qquad
cx_2 \ge 0
\]
where $\mathsf Z_N(ap_1) = \{(0,k)\}$.  The point $(x_1,x_2) = (c,a)$ satisfies both strictly.  

If $N$ has shape $(a,b,c,d)$,
then the irredundant outer Betti inequalities are 
\[
(a+b)x_1 > k_2x_2,
\qquad \text{and} \qquad
(c+d)x_2 > k_1x_2,
\]
for some $k_2 < c + d$ and $k_1 < a+b$,
and the point $(x_1, x_2) = (c+d, a+b)$ satisfies both strictly.  In either case, Theorem~\ref{t:kunzcondition} completes the proof.  
\end{proof}

For any $(a, c) \in \ZZ_{\ge 2}$, there exists a Kunz nilsemigroup with shape $(a,c)$; one may choose, for instance, $p_1 = 1$ and $p_2 = a$, as then each coset of the subgroup $\<a\> \subseteq \ZZ_m$ forms a ``row'' of the staircase.  More generally, up to symmetry, a choice of $p_1$ and $p_2$ fills the staircase shape if and only if $p_2$ with order $c$ and $p_1$ generates $\ZZ_m/\<a\>$.  



Given a staircase shape $(a,b,c,d)$, it follows from \cite[Section 4]{fioletal} that a choice of $p_1$ and $p_2$ fills the staircase if and only the following $4$ conditions hold:
\begin{align*}
(a+b)p_1 &\equiv dp_2 \bmod m, & (1) \\
(c+d)p_2 &\equiv bp_1 \bmod m, & (2) \\
ap_1 + cp_2 &\equiv 0 \bmod m, & (3) \\
\gcd(p_1,p_2,m) & = 1. & (4)
\end{align*}
They cite the first author's PhD dissertation for a proof that such $p_1,p_2$ exist if and only if $\gcd(a,b,c,d) = 1$. We were not able to find a readable version of this, so we've elected to include a short proof for completeness.  

\begin{thm}\label{t:2dstaircasecondition}
There exists a Kunz nilsemigroup with 3 outer Betti elements and shape $(a,b,c,d)$ if and only if $\gcd(a,b,c,d) = 1$. 
\end{thm}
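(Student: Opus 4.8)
The plan is to recast the existence question as a statement about a single fixed rank-$2$ sublattice of $\ZZ^2$ and the cyclicity of its quotient. Combining the cited result of~\cite{fioletal} with Proposition~\ref{p:2genallkunz}, a Kunz nilsemigroup of shape $(a,b,c,d)$ exists iff a modular staircase one does, iff the shape admits a filling, iff there exist $p_1,p_2\in\ZZ_m$ satisfying the four displayed congruences $(1)$--$(4)$. So it suffices to prove that $(1)$--$(4)$ are solvable if and only if $\gcd(a,b,c,d)=1$. To this end set
\[
L_0=\langle (a,c),\,(-b,c+d)\rangle\subseteq\ZZ^2 .
\]
The key numerical observation is that $(a,c)-(-b,c+d)=(a+b,-d)$ lies in $L_0$, and
\[
[\ZZ^2:L_0]=\left|\det\begin{pmatrix} a & -b\\ c & c+d\end{pmatrix}\right|=ac+ad+bc=(a+b)(c+d)-bd=m,
\]
so $\ZZ^2/L_0$ is a finite abelian group of order exactly $m$; by Smith normal form it is cyclic if and only if the gcd of the matrix entries, namely $\gcd(a,b,c,c+d)=\gcd(a,b,c,d)$, equals $1$.

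For the ``if'' direction, assume $\gcd(a,b,c,d)=1$, so there is an isomorphism $\psi\colon\ZZ^2/L_0\to\ZZ_m$. Put $p_1=\psi(\overline{(1,0)})$ and $p_2=\psi(\overline{(0,1)})$. Since $(a,c)$, $(-b,c+d)$, and $(a+b,-d)$ all lie in $L_0$, the congruences $(3)$, $(2)$, $(1)$ hold; and since $\psi$ is onto, $p_1,p_2$ generate $\ZZ_m$, so $\gcd(p_1,p_2,m)=1$, which is $(4)$. By~\cite{fioletal} these $p_1,p_2$ fill the staircase of shape $(a,b,c,d)$, yielding a modular staircase nilsemigroup $N$ of that shape; since $a,b,c,d\ge1$ its full-support outer Betti element $(a,c)$ is distinct from $(a+b,0)$ and $(0,c+d)$, so $N$ has $3$ outer Betti elements. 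Proposition~\ref{p:2genallkunz} then gives that $N$ is Kunz.

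For the ``only if'' direction, suppose a Kunz nilsemigroup of shape $(a,b,c,d)$ exists; being a modular staircase nilsemigroup of that shape, it arises from some $p_1,p_2$ satisfying $(1)$--$(4)$. Let $L=\{(x,y)\in\ZZ^2 : xp_1+yp_2\equiv0\pmod m\}$, the kernel of the homomorphism $\ZZ^2\to\ZZ_m$, $(x,y)\mapsto xp_1+yp_2$, whose image is $\langle p_1,p_2\rangle$. Congruences $(2)$ and $(3)$ give $L_0\subseteq L$, while $(4)$ gives $[\ZZ^2:L]=|\langle p_1,p_2\rangle|=m=[\ZZ^2:L_0]$, forcing $L=L_0$. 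Hence $\ZZ^2/L_0=\ZZ^2/L$ is isomorphic to $\langle p_1,p_2\rangle=\ZZ_m$, so it is cyclic, and therefore $\gcd(a,b,c,d)=1$.

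The step I expect to need the most care is the interface with the borrowed equivalence of~\cite{fioletal}: one must use it in the correct direction on each side and make sure the chain ``solution of $(1)$--$(4)$'' $\Leftrightarrow$ ``filling of shape $(a,b,c,d)$'' $\Leftrightarrow$ ``modular staircase nilsemigroup of shape $(a,b,c,d)$'' $\Leftrightarrow$ ``Kunz nilsemigroup of shape $(a,b,c,d)$'' is genuinely airtight, the last link being exactly Proposition~\ref{p:2genallkunz}. By contrast, the lattice bookkeeping---the determinant identity $m=ac+ad+bc$ and the standard fact that $\ZZ^2/M\ZZ^2$ is cyclic iff the gcd of the entries of $M$ is $1$---is routine, and the only thing to double-check numerically is that the index of $L_0$ really does come out to $m$ and not, say, $m$ times a stray factor.
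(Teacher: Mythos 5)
Your proof is correct, and it runs on the same Smith-normal-form engine as the paper's, but framed dually. The paper studies the kernel of a map $f\colon\ZZ_m^2\to\ZZ_m^2$ given by the matrix with rows $(a+b,-d)$ and $(a,c)$, and shows $\ker f\cong\ZZ_m$ exactly when $\gcd(a,b,c,d)=1$; you instead work with the sublattice $L_0=\langle(a,c),(-b,c+d)\rangle\subseteq\ZZ^2$ and the cyclicity of $\ZZ^2/L_0$. Both $2\times2$ matrices have determinant $m$ and entry-gcd $\gcd(a,b,c,d)$, so the two arguments are parallel: you encode conditions (2) and (3) in $L_0$ and recover (1) by subtraction, whereas the paper builds (1) and (3) into $f$ and derives (2). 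The one place your route genuinely pays off is condition (4): you read $\gcd(p_1,p_2,m)=1$ directly off the surjectivity of $\psi$, whereas the paper needs a short ad~hoc argument showing that the common factor $h=\gcd(\gcd(p_1,m),\gcd(p_2,m))$ would annihilate a generator of $\ker f$. Your ``only if'' direction (forcing $L=L_0$ by equality of indices and concluding cyclicity) is likewise a clean complement to the paper's contrapositive Smith-form computation, and your care about the chain through \cite{fioletal} and Proposition~\ref{p:2genallkunz} matches what the paper leaves implicit.
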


\begin{proof}
Suppose that $\gcd(a,b,c,d) = 1$, and let $f : \ZZ_m^2 \to \ZZ_m^2$ be the group homomorphism given by the matrix
\[
\begin{pmatrix}
a+b & -d \\ a & c
\end{pmatrix}.
\] 
We claim that $\ker(f) \cong \ZZ_m$, and that any generator $(p_1,p_2)$ of $\ker(f)$ satisfies $(1)-(4)$. \\
To see that $\ker(f) \cong \ZZ_m$, we use the fact that $\ker(f) \cong \ker(A)$, where 
\[
A = \begin{pmatrix}
1 & 0 \\ 0 & m
\end{pmatrix}
\]
is the Smith normal form~\cite{snf} of $f$ since $\gcd(a,a+b,-d,c) = 1$ and $(a+b)c + ad = m$.  Clearly $\ker(A)$ is generated by $(0,1)$, and thus isomorphic to $\ZZ_m$.  

Now, suppose $(p_1,p_2)$ generate $\ker(f)$.  By construction, both~(1) and~(3) are satisfied, and subtracting~(1) from~(3) yields~(2).  If either $h_1 = \gcd(p_1,m)$ or $h_2 = \gcd(p_2,m)$ equal $1$, then (4) is satisfied and we are done. Otherwise, suppose both $h_1,h_2 > 1$, and set $h = \gcd(h_1,h_2)$. Notice $\tfrac{m}{h_1}p_1 = \tfrac{m}{h_2}p_2 = 0$, but this means that $\tfrac{m}{h}(p_1,p_2) = (0,0)$. Since $(p_1,p_2)$ generate $\ker(f)$, we must have $h = 1$, as desired.  

Conversely, if $\gcd(a,b,c,d) = g > 1$, then the Smith normal form of $f$ is 
\[
\begin{pmatrix}
g & 0 \\ 0 & m
\end{pmatrix},
\]
so $\ker(f) \cong \ZZ_{m/g}$ and thus for any $(p_1,p_2) \in \ker f$, we have $\tfrac{m}{g}(p_1,p_2) = 0$.  This means that $p_1,p_2$ fail to satisfy condition (4).
\end{proof}

This leaves the case where $N$ is not staircase, i.e., $\dim F_N = 1$.  By Theorem~\ref{t:purefan}, this occurs when $F_N$ is a ray on the shared boundary of two faces of $\mathcal G(m;A)$ of the form $F_{N'}$ with $N'$ staircase.  Corollary~\ref{c:2genrays} below records all such rays, at which point Theorem~\ref{t:cutandpaste} identifies the structure of $N$.  Any numerical semigroup with Kunz nilsemigroup $N$ is complete intersection since there is a trade involving 2 generators.  

\begin{cor}\label{c:2genrays}
If $N$ is staircase and has shape $(a,c)$ and $p_2 = a$, then $F_N$ has rays
\[
\vec r_1 = (1, 0)
\qquad \text{and} \qquad
\vec r_2 = (k, c),
\]
where $\mathsf Z_N(ap_1) = \{(0,k)\}$.  
If $N$ has shape $(a,b,c,d)$, then $F_N$ has rays
\[
\vec r_1 = (d, a + b)
\qquad \text{and} \qquad
\vec r_2 = (c + d, b).
\]
\end{cor}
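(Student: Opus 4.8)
The plan is to produce, in each of the two cases, an irredundant $H$-description of $F_N$ by way of Theorem~\ref{t:outerbettihdesc}, and then to read off its two bounding rays. Before splitting into cases I would record the favorable features of the setup: because $N$ is staircase with $k=2$ atoms, $\dim F_N = 2$ by Corollary~\ref{c:chamberstaircase}, and because every non-nil element of a staircase nilsemigroup has a unique factorization there are no inner Betti elements, so $H_N = \RR^2$. Hence $F_N$ is a full-dimensional pointed cone in $\RR^2$, i.e.\ an intersection of two closed half-planes through the origin, and its two rays are exactly the intersections of $F_N$ with the two bounding lines. So it suffices to identify the two Betti inequalities cutting out $F_N$ and, for each, solve the single linear equation obtained by turning it into an equality.

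In the shape-$(a,c)$ case the outer Betti elements are $(a,0)$ and $(0,c)$. Using $p_2 = a$ I would compute $\overline{(0,c)} = cp_2 = ca \equiv 0 \bmod m$, so that Betti inequality is $cx_2 \ge 0$; and $\overline{(a,0)} = ap_1$ lies in $\langle a\rangle = \langle p_2\rangle$ (as $a \mid ap_1$), so its unique factorization is supported on the second coordinate, i.e.\ is $(0,k)$ with $\mathsf Z_N(ap_1) = \{(0,k)\}$, and the corresponding Betti inequality is $ax_1 \ge kx_2$. By Theorem~\ref{t:outerbettihdesc} these two inequalities (together with $x_1 \ge 0$, which is implied once $k \ge 1$ and vacuous when $k=0$) cut out $F_N$, so $F_N = \{x \in \RR^2_{\ge 0} : ax_1 \ge kx_2\}$; one bounding ray is $\vec r_1 = (1,0)$, where $x_2 = 0$, and the other is the ray on which $ax_1 = kx_2$.

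In the shape-$(a,b,c,d)$ case the outer Betti elements are $(a+b,0)$, $(0,c+d)$, and the full-support element $(a,c)$; the last is redundant by Proposition~\ref{p:reduandantbettis}(c). From the congruences defining the shape, $\overline{(a+b,0)} = (a+b)p_1 \equiv dp_2 \bmod m$, whose factorization is $(0,d)$ (valid since $d < c+d$), and $\overline{(0,c+d)} = (c+d)p_2 \equiv bp_1 \bmod m$, whose factorization is $(b,0)$ (valid since $b < a+b$); also $\overline{(a,c)} \equiv 0$. Hence by Theorem~\ref{t:outerbettihdesc},
\[
F_N \;=\; \{\, x \in \RR^2_{\ge 0} \;:\; (a+b)x_1 \ge d x_2, \ (c+d)x_2 \ge b x_1 \,\};
\]
both inequalities are facet-defining since $b, d \ge 1$, the point $(c+d, a+b)$ lies in the interior, and the two bounding rays are $\vec r_1 = (d, a+b)$, on which $(a+b)x_1 = dx_2$, and $\vec r_2 = (c+d, b)$, on which $(c+d)x_2 = b x_1$.

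The heart of the argument is the bookkeeping in the middle two paragraphs: each outer Betti element $z$ must be matched with the Betti inequality $z\cdot x \ge a\cdot x$ it produces, and this requires identifying the factorization of the residue $\overline{z}\in\ZZ_m$ — which is precisely what the hypothesis $p_2 = a$, respectively the congruences defining the shape, tell us. I expect no genuine obstacle here; the main danger is simply transposing the two coordinates, or the pairs $(a,b)$ and $(c,d)$, when lining things up, so I would first fix once and for all which atom is $p_1$ and which is $p_2$ and proceed carefully. That the resulting description is irredundant — no further inequalities are needed, and the full-support outer Betti element in the ``V'' case may be discarded — is supplied by Proposition~\ref{p:reduandantbettis}, so nothing extra is needed there.
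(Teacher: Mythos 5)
Your proposal is correct and takes essentially the same route as the paper, whose proof is simply the one-line directive ``Apply Theorem~\ref{t:outerbettihdesc} and the proofs of Proposition~\ref{p:2genallkunz} and Theorem~\ref{t:2dstaircasecondition}.'' You have carried out exactly that plan, unpacking the Betti inequalities from Proposition~\ref{p:2genallkunz} (via the congruences preceding Theorem~\ref{t:2dstaircasecondition}) and reading off the rays of the resulting two-dimensional cone.
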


\begin{proof}
Apply Theorem~\ref{t:outerbettihdesc} and the proofs of Proposition~\ref{p:2genallkunz} and Theorem~\ref{t:2dstaircasecondition}.  
\end{proof}


In the latter case of Corollary~\ref{c:2genrays}, the rays of the face $F \subseteq \cC_m$ with $p(F) = F_N$ are 
\begin{align*} 
\vec r_1 &= (dx_1 + (a+b)y_1, \ldots, dx_{m-1}+(a+b)y_{m-1}) \\
\vec r_2 &=  ((c+d)x_1 + by_1, \ldots, (c+d)x_{m-1}+by_{m-1})
\end{align*}
where $(x_p,y_p)$ is the unique factorization for each nonzero non-nil $p \in N$.  
A similar construction can be done in the former case in Corollary~\ref{c:2genrays}, though this case is also addressed in \cite[Theorem~4.6]{kunzfaces2}.

\subsection{Cup Posets}
\label{subsec:cupposets}

In \cite{wilfmultiplicity}, the authors compute the number of rays of $\cC_m$ up to $m = 21$.  This data suggests the number of rays of $\cC_m$ grows exponentially in $m$.  In this section, we explicitly construct a family of faces whose number of rays is exponential in $m$. 

For this subsection, fix $d \ge 3$, let $m = 3(d-1)$, and consider the modular nilsemigroup $N$ with $A = \{1, d, d+1, \ldots m - d, m - 1\}$ whose divisibility poset of non-nil elements has
\[
1 \lessdot 2 \lessdot \cdots \lessdot d-1 \qquad \text{and} \qquad m-1 \lessdot m-2 \lessdot \cdots \lessdot m - (d-1)
\]
as its cover relations (we call this a \emph{cup poset}; $d = 4$ is depicted on the left in Figure~\ref{f:cupposet}).  
It is not hard to show $\dim F_N = d$ and the point $(1, d-1, \ldots, d-1, 1) \in F_N^\circ$ satisfies all Betti inequalities of $N$ strictly.  In particular, $N$ is a Kunz nilsemigroup by Theorem~\ref{t:kunzcondition}.  

\begin{figure}[t!]
\begin{center}
\includegraphics[height=1.2in]{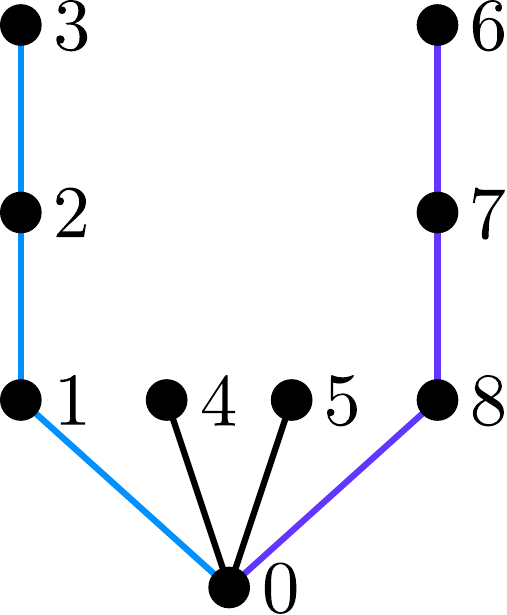}
\hspace{5em}
\includegraphics[height=1.2in]{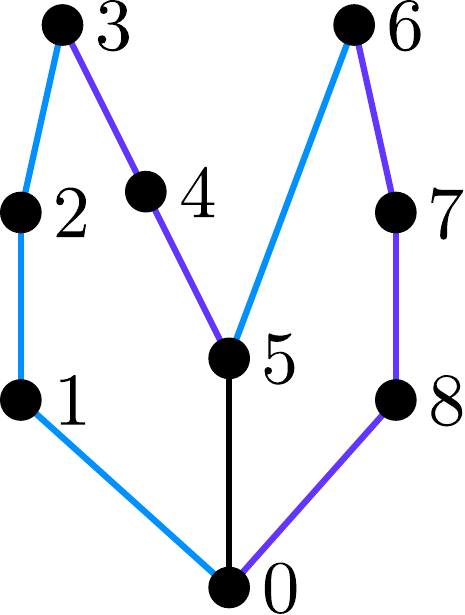}
\hspace{2em}
\includegraphics[height=1.2in]{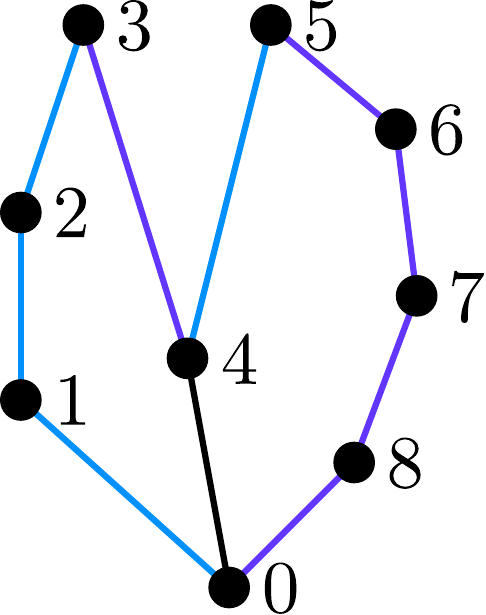}
\hspace{2em}
\includegraphics[height=1.2in]{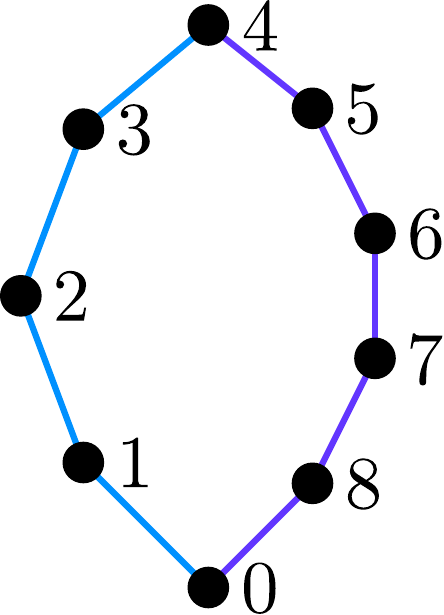}
\end{center}
\caption{Cup poset for $d = 4$, alongside the posets of three of its rays.}
\label{f:cupposet}
\end{figure}

\begin{prop}\label{p:cupposethdesc}
We have $x \in F_N$ if and only if $y = q(x)$ satisfies
\[
dy_1 \ge y_d,
\qquad
dy_{m-1} \ge y_{m-d}, 
\qquad
y_1 + y_k \ge y_{k+1},
\qquad \text{and} \qquad
y_{m-1} + y_k \ge y_{k-1}
\]
for $d \leq k \leq m - d$.  
\end{prop}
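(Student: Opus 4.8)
The plan is to reduce the statement to the Betti-inequality description of $F_N$ supplied by Theorem~\ref{t:outerbettihdesc}. First I would record that the cup nilsemigroup $N$ is staircase: every nonzero residue in $\ZZ_m$ lies on the left chain $\{1,\dots,d-1\}$, on the right chain $\{m-1,\dots,m-(d-1)\}$, or is a maximal middle atom in $\{d,\dots,m-d\}$ (with $m-d=2d-3$), and the unique factorizations are $\mathsf{Z}_N(i)=\{ie_1\}$, $\mathsf{Z}_N(m-i)=\{ie_{m-1}\}$ for $1\le i\le d-1$, and $\mathsf{Z}_N(j)=\{e_j\}$ for middle $j$. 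Hence $H_N=\RR^k$ with $k=|A|=d$, so $F_N=p(F)$ is exactly the set of $x\in\RR^d$ satisfying all Betti inequalities of $N$, and by Proposition~\ref{p:circleoflightsinverse} every $x\in F_N$ has $q(x)_i=z\cdot x$ for the unique $z\in\mathsf{Z}_N(i)$; in particular $q(x)_{d-1}=(d-1)q(x)_1$ and $q(x)_{m-(d-1)}=(d-1)q(x)_{m-1}$, and throughout I read the $y_i$ in the statement as these factorization values (on $F_N$ they agree with $q(x)_i$). The forward implication is then quick: $x\in F_N$ gives $y=q(x)\in F\subseteq\cC_m$, so $y$ obeys all facet inequalities $y_a+y_b\ge y_{a+b}$ of $\cC_m$; the families $y_1+y_k\ge y_{k+1}$ and $y_{m-1}+y_k\ge y_{k-1}$ ($d\le k\le m-d$) are such inequalities verbatim, while $dy_1\ge y_d$ and $dy_{m-1}\ge y_{m-d}$ come from $y_1+y_{d-1}\ge y_d$ and $y_{m-1}+y_{m-(d-1)}\ge y_{m-d}$ together with the two chain identities just noted.

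For the converse I would enumerate the outer Betti elements of $N$ and show the four families already account for all Betti inequalities. The minimal elements of $\mathsf{Z}_N(\infty)$ are $de_1$ (image $d$), $de_{m-1}$ (image $m-d$), $e_1+e_{m-1}$ (image $0$), the elements $e_1+e_j$ and $e_{m-1}+e_j$ for middle $j$, and the elements $e_j+e_{j'}$ for middle $j\le j'$; a direct check shows each of these has all coordinate-deletions equal to unique factorizations of non-nil elements. The Betti inequalities of $de_1$ and $de_{m-1}$ are the first two families; those of $e_1+e_j$ and $e_{m-1}+e_j$ are the third and fourth (when $k\in\{d,m-d\}$ the image lands at the top of a chain, with factorization $(d-1)e_{m-1}$ or $(d-1)e_1$, which is exactly the $y_{k+1}$ or $y_{k-1}$ in the statement); and $e_1+e_{m-1}$ contributes only $x_1+x_{m-1}\ge0$. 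It remains to derive the Betti inequalities of $2e_j$ and $e_j+e_{j'}$ from the four families together with $x\ge0$. Iterating the third family downward from $k=m-d$ and the fourth upward from $k=d$ yields, for each middle $j$, the bounds
\[
y_j\;\ge\;(d-1)y_1-(j-d+1)\,y_{m-1}
\qquad\text{and}\qquad
y_j\;\ge\;(d-1)y_{m-1}-(m-d-j+1)\,y_1 ,
\]
and adding one bound of each type --- the pairing forced by whether $j+j'<m$ or $j+j'>m$ --- and discarding the nonnegative leftover terms produces $y_j+y_{j'}\ge(m-j-j')\,y_{m-1}$ in the first case and $y_j+y_{j'}\ge(j+j'-m)\,y_1$ in the second, which are precisely those Betti inequalities (the case $j+j'\equiv0\bmod m$ being trivial). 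So $x$ satisfies every Betti inequality of $N$, hence $x\in F_N$.

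The hard part will be that last redundancy step for the ``middle--middle'' Betti elements $2e_j$ and $e_j+e_{j'}$: one has to track precisely where $j+j'$ lands in the cup poset after reduction modulo $m$ (interior of a chain, top of a chain, or $0$) and choose which telescoped bound to pair with which so the final coefficients of $y_1$ and $y_{m-1}$ come out nonnegative; the corner cases $k\in\{d,m-d\}$, and the small cases $d=3,4$ where the middle has one or two elements, should be checked by hand. Identifying the outer Betti elements and matching the other Betti inequalities to the four families is then a routine unwinding of the definitions in Section~\ref{sec:facetouterbetti}.
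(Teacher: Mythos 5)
Your proposal is correct and takes essentially the same route as the paper: reduce to the Betti inequalities via Theorem~\ref{t:outerbettihdesc}, observe that the only outer Betti elements not already listed are the ``middle--middle'' ones $e_i+e_j$ with $d\le i\le j\le m-d$, and dispatch those by telescoping the two chain families to bound $y_i$ and $y_j$ below by affine combinations of $y_1,y_{m-1}$ and then discarding nonnegative slack. The paper packages the same telescoping step a bit more tersely (adding and subtracting $(i-(d-1))y_{m-1}$ rather than pairing two explicit lower bounds), and doesn't bother with a separate forward implication since Theorem~\ref{t:outerbettihdesc} already gives the equivalence, but the mechanism and the case split on $i+j\bmod m$ are identical to yours.
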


\begin{proof}
Each inequality above is a Betti inequality of $N$, so by Theorem~\ref{t:outerbettihdesc} it suffices to show the remaining outer Betti inequalities are redundant.  Each such outer Betti inequality has the form 
$y_i + y_j \ge y_{i+j}$,
where $d \le i \le j \le m - d$ and $i + j \ne 0$.  Notice
\begin{align*}
y_i + y_j &= y_i + y_j + (i - (d-1))y_{m-1} - (i - (d-1))y_{m-1} \\
& \geq (d-1)y_1 + y_j - (i - (d-1))y_{m-1} \\
& \geq \big((d-1 ) - (2(d-1) - j)\big)y_1 + (d-1)y_{m-1} - (i-(d-1))y_{m-1} \\
&= (j - (d-1))y_1 + (2(d-1) - i)y_{m-1}
\end{align*}
and
\[
y_{i+j} = \begin{cases}
(3(d-1) - (i+j))y_{m-1} & \text{if } i+j > m - d;
\\
(i+j - 3(d-1))y_1 & \text{if } i+j < d.
\end{cases}
\]
In either case the corresponding coefficient in $(j - (d-1))y_1 + (2(d-1) - i)y_{m-1}$ is larger than $y_{i,j}$, which means that $y_i + y_j \ge y_{i+j}$.  
\end{proof}

\begin{prop}\label{p:cupposetcube}
There is an invertible linear transformation $H$ that sends $F_N$ to the cone over a $(d-1)$-cube.  
\end{prop}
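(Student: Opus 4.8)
The plan is to read off an explicit $H$-description of $F_N$ from Proposition~\ref{p:cupposethdesc}, observe that its $2(d-1)$ facet functionals split into $d-1$ natural pairs whose two members have a common sum $\ell^\ast$, and then build $H$ so that the two inequalities in the $j$-th pair become exactly $|s_j|\le t$; this identifies $H(F_N)$ with $\{(t,s_1,\dots,s_{d-1}):|s_j|\le t\ \forall j\}$, the cone over the cube $\{1\}\times[-1,1]^{d-1}$. The only genuinely nonmechanical steps are spotting the pairing and checking $H$ is invertible.

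First I would make the inequalities of Proposition~\ref{p:cupposethdesc} into honest linear functionals on $\RR^d$: by Proposition~\ref{p:circleoflightsinverse} the coordinates $(y_i)_{i\in A}$, namely $y_1,y_d,y_{d+1},\dots,y_{m-d},y_{m-1}$, are linear coordinates on $F_N$, and in the two ``end'' inequalities one substitutes $y_{d-1}=(d-1)y_1$ and $y_{m-(d-1)}=(d-1)y_{m-1}$ (both $d-1$ and $m-(d-1)$ have unique factorizations $(d-1)\cdot 1$ and $(d-1)\cdot(m-1)$ in the cup poset, as used in the proof of Proposition~\ref{p:cupposethdesc}). This presents $F_N$ as the solution set of the $2(d-1)$ inequalities $\ell\ge 0$ where the functionals $\ell$ are grouped into the $d-1$ pairs
\[
\begin{array}{ll}
dy_1-y_d, & y_{m-1}+y_d-(d-1)y_1;\\[1mm]
y_1+y_k-y_{k+1}, & y_{m-1}+y_{k+1}-y_k \quad (d\le k\le m-d-1);\\[1mm]
dy_{m-1}-y_{m-d}, & y_1+y_{m-d}-(d-1)y_{m-1}.
\end{array}
\]
A one-line coordinate check shows the two functionals in each displayed pair sum to $\ell^\ast(y):=y_1+y_{m-1}$; the reason such a pairing exists at all is the observation that the sum of all $2(d-1)$ facet normals equals $(d-1)\,\ell^\ast$. (There are $1+(d-3)+1=d-1$ pairs, hence $2(d-1)$ facets, matching the facet count of a cone over a $(d-1)$-cube; we will not need to know these inequalities are irredundant, only that they cut out $F_N$.)

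Writing the pairs as $(\ell_j^+,\ell_j^-)$ for $j=1,\dots,d-1$, I would then define $H\colon\RR^d\to\RR^d$ by $H(y)=(t,s_1,\dots,s_{d-1})$ with $t=\ell^\ast(y)$ and $s_j=\ell_j^+(y)-\ell_j^-(y)$. Since $\ell_j^++\ell_j^-=\ell^\ast$, we have $\ell_j^\pm=\tfrac12(t\pm s_j)$ as functionals, so $\ell_j^+\ge 0$ and $\ell_j^-\ge 0$ are jointly equivalent to $|s_j|\le t$; hence $H(F_N)=\{(t,s_1,\dots,s_{d-1}):|s_j|\le t\ \text{for all }j\}$, which is the cone over $\{1\}\times[-1,1]^{d-1}$. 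It then remains only to verify that $H$ is invertible, which I expect to be the main obstacle in the sense that it is the one place a computation is unavoidable; I would handle it by exhibiting the inverse. It suffices to show $\ell^\ast$ together with one functional from each pair — say $dy_1-y_d$, the functionals $y_1+y_k-y_{k+1}$ for $d\le k\le m-d-1$, and $dy_{m-1}-y_{m-d}$ — are linearly independent. The partial sums $\sum_{k=d}^{j}(y_1+y_k-y_{k+1})$ telescope, so together with $dy_1-y_d$ they express every middle coordinate $y_k$ as an affine function of $y_1$ with explicit integer coefficients; then $\ell^\ast$ gives $y_{m-1}$ in terms of $y_1$, and substituting into $dy_{m-1}-y_{m-d}$ leaves a single equation whose coefficient of $y_1$ works out to $-3(d-1)=-m\neq 0$ (this is where $d\ge 3$ is used). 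Thus $y_1$, and with it all of $y$, is recovered uniquely, $H$ is a linear isomorphism, and combining with the previous paragraph completes the proof.
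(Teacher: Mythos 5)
Your proof is correct and uses the same key idea as the paper: the $2(d-1)$ facet inequalities of $F_N$ pair up so that the two functionals in each pair sum to the fixed functional $y_1+y_{m-1}$, which is exactly the opposite-facet pairing of a cube. The only cosmetic differences are that you map onto the centered cube $[-1,1]^{d-1}$ via $s_j=\ell_j^+-\ell_j^-$ whereas the paper maps onto $[0,1]^{d-1}$ via $w_j=\ell_j^+$, and you make the invertibility of $H$ explicit (the paper leaves it implicit, as it follows from $\dim F_N=d$ and pointedness of $\cC_m$).
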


\begin{proof}
For $x \in F_N$, the inequalities in Proposition~\ref{p:cupposethdesc} are
\begin{align*}
dx_1 - x_2 &\ge 0,
& 
-(d-1)x_1 + x_2 + x_d &\ge 0,
\\
x_1 + x_k -x_{k+1} &\ge 0,
& 
-x_{k}  + x_{k+1} + x_{d} &\ge 0,
\\
x_1 + x_{d-1} - (d-1)x_d &\ge 0,
& 
-x_{d-1} + dx_d &\ge 0,
\end{align*}
where $k \in \{2, \ldots d - 2\}$. Let $H_1, H_2 \in \RR^{(d-1)\times d}$ denote the matrices corresponding to the first and second columns of inequalities, respectively.  Every row of the matrix $J = H_1 + H_2$ equals ${\bf j} = e_1 + e_d$.  
Recall that the matrix defining the cone over the standard $(d-1)$-cube is 
\[
\begin{pmatrix}
\hphantom{-}I_{d-1} & {\bf 0} \\ -I_{d-1} & {\bf 1}
\end{pmatrix},
\]
where $\bf{1}$ is the column vector of all $1$'s. Letting 
$H = \begin{pmatrix}
H_1 \\
{\bf j}
\end{pmatrix}$, one readily checks 
\[ 
\begin{pmatrix}
\hphantom{-}I_{d-1} & {\bf 0} \\ -I_{d-1} & {\bf 1}
\end{pmatrix} H = \begin{pmatrix}
H_1 \\
-H_1 + J 
\end{pmatrix}= \begin{pmatrix}
H_1 \\
H_2
\end{pmatrix},
\]
which completes the proof.
\end{proof}

\begin{remark}\label{r:cupposetrays}
The Kunz posets of the rays of $F_N$ have an interesting combinatorial structure. Picking a vertex of a cube (and thus a ray of $F_N$) is equivalent to making a binary choice for each pair of opposite faces \cite[Chapter~7]{ziegler}.  Following the map $H$ from Proposition~\ref{p:cupposetcube}, each pair of opposing faces correspond to choosing either $x_i + x_1 = x_{i+1}$ or $x_i = x_{i+1} + x_{m-1}$ for each $i = d - 1, \ldots, 2(d - 1) - 1$, yielding the Kunz poset relation $i \prec i + 1$ or $i + 1 \prec i$ for each $i$.  Three examples are depicted in Figure~\ref{f:cupposet}; 
we call these \emph{mountain range posets}.  
\end{remark}

One may use the gluing constructions in \cite[Section~6]{kunzfaces2} to construct faces of $\cC_m$ whose cross sections are simplicial.  This raises the following.  

\begin{question}\label{q:crosspolytope}
Is there a family of faces of $\cC_m$ whose cross sections are cones over cross polytopes?
\end{question}




\end{document}